\documentclass[a4paper, 11pt]{amsart}

\usepackage[dvipsnames]{xcolor}

\usepackage{iftex}
\ifPDFTeX 
  \usepackage[utf8]{inputenc}
  \usepackage[T1]{fontenc}
\else 
  \usepackage{fontspec}
\fi
\usepackage{lmodern}

\usepackage{geometry}
\usepackage[textwidth=28mm]{todonotes}
\usepackage{mathtools}
\usepackage{yfonts}
\usepackage[foot]{amsaddr}
\geometry{
 a4paper,
  margin=1in,
 }

\usepackage[pdfencoding=auto]{hyperref}
\hypersetup{
    colorlinks,
    citecolor=green,
    filecolor=black,
    linkcolor=blue,
    urlcolor=black
}

\usepackage[english]{babel}

\usepackage{dsfont}
\usepackage{placeins}

\usepackage{caption}
\usepackage{comment}
\usepackage{enumerate}
\usepackage[shortlabels]{enumitem}
\usepackage{tikz}
\usepackage{pgfplots}
\pgfplotsset{compat=1.15}
\usetikzlibrary{arrows}
\usepackage{xcolor}
\definecolor{gemblue}{RGB}{0,114,189}

\numberwithin{equation}{section}

\usepackage{tabularx}

\newcommand{\nocontentsline}[3]{}
\let\origcontentsline\addcontentsline
\newcommand\stoptoc{\let\addcontentsline\nocontentsline}
\newcommand\resumetoc{\let\addcontentsline\origcontentsline}

\theoremstyle{plain}
\newtheorem{thm}{Theorem}[section]

\newtheorem{rem}[thm]{Remark} 

\newtheorem{prop}[thm]{Proposition} 
\theoremstyle{definition} 
 
\theoremstyle{remark}

\newcommand{\R}{\mathbb{R}}

\newcommand{\N}{\mathbb{N}}

\newcommand{\eps}{\varepsilon}


\definecolor{codegreen}{rgb}{0,0.6,0}
\definecolor{codegray}{rgb}{0.5,0.5,0.5}
\definecolor{codepurple}{rgb}{0.58,0,0.82}
\definecolor{backcolour}{rgb}{0.95,0.95,0.92}



\author{Tommaso Tenna}
\address[Tommaso Tenna]{Dipartimento di Matematica, Sapienza Università di Roma, P.le Aldo Moro 5, 00185 Rome, Italy -- Université Côte d’Azur, CNRS, LJAD, Parc Valrose, F-06108 Nice, France}
\email[Tommaso Tenna]{tommaso.tenna@uniroma1.it -- tommaso.tenna@univ-cotedazur.fr}

\date{}

\title[Relaxation Schemes for Flows in Networks]{Relaxation Schemes for Flows in Networks: Application to shallow water and blood flow equations}

\begin{document}

\begin{abstract}
    A numerical scheme of relaxation type is proposed to approximate hyperbolic conservation laws in canal networks. Physical conditions at the junction are given and a novel strategy based on \cite{BNR2025} is introduced to approximate the solution, avoiding the use of approximate Riemann solvers. This general approach is applied to shallow water and blood flow equations, dealing both the subcritical and the supercritical case. The relaxation scheme is complemented with a well-balanced strategy to treat source terms. We investigate properties of the numerical scheme and we present many numerical tests in different settings.\\
    
    \medskip
    
    \textsc{2020 MS Classification: 35R02, 
    35L60, 
    35L65 
    76M12, 
    65M08 
    92C42. 
    }
\end{abstract}
\keywords{}

\maketitle

\section{Introduction}
In recent years, the interest for models of conservation laws on networks has significantly increased. Several applications can be found, ranging from vehicular traffic on road networks \cite{garavellopiccoli2006, brettinatalinipiccoli2007, colombogoatinpiccoli2010, borsche2014_traffic} to biological networks describing the movement of bacteria, cells and other microorganisms \cite{borsche2014, brettinataliniribot2014, brettinatalini2018}. For a comprehensive overview of the topic, the reader is referred to \cite{bressanpiccoli2014}. The common features of all these models are the description of the state space through a topological graph (the network) and a dynamics given by solutions to systems of partial differential equations. One of the most widely investigated topic concerns water channel networks \cite{bastin2009open, brianipiccoli2016} and its extension to blood flow arterial networks \cite{canickim2003, muller2015, lucca2025}.\\
The literature on numerical schemes for simulating water flows in single channel is quite rich. Shallow water models are based on hyperbolic systems of conservation laws with possible source terms to describe bottom topography and several numerical methods have been proposed to address difficulties arising in their approximation, satisfying steady-state-preserving and positivity-preserving properties, entropy inequalities and stability properties at discontinuous bottom, see for instance \cite{perthame2001, audusse2004, noelle2006, castro2006, fjordholm2011, dansac2016, ranocha2017} and references therein. The extension of numerical methods to networks has been investigated in different frameworks \cite{brianipiccoli2016, brianipiccoli2018}, but rarely addressing problems concerning the presence of source terms in the dynamics. 
Over the last decades, also the mathematical description of blood flows has been investigated, proposing several models and approaches to simulate the phenomenon, see \cite{formaggiaquarteroni2009, bertagliapareschi2023}. One of the simplest hyperbolic model has been proposed in \cite{canickim2003}, obtained as simplification of the incompressible axisymmetrical Navier-Stokes equations. This particular model, enriched with further hypotheses on the shape of the velocity profile, leads to a shallow water-type model for $1$D blood flows.\\ 
Such hyperbolic systems have two genuinely nonlinear characteristic field and may exhibit different regimes:  \textit{subcritical} (or \textit{fluvial}) for eigenvalues of opposite sign, namely when there are two waves propagating in opposite directions, and \textit{supercritical} (or \textit{torrential}) when both eigenvalues are positive.\\
The main difficulty arising in the numerical approximation of such hyperbolic conservation laws on networks is due to the junction conditions \cite{bressanpiccoli2014}. Several conditions may be naturally imposed to satisfy conservation properties at the junction. The most classical and natural condition is the conservation of fluid mass, namely the assumption that mass is not dispersed at the junction and the sum of fluxes in the incoming branches equal fluxes from the outgoing ones. Additional conditions are usually conservation of energy or pressure. We refer the reader to \cite{colombo2008} for details on coupling conditions in the subcritical regime and \cite{gugat2004} for discussion on supercritical one.\\
These assumptions are usually not sufficient to get a well-posed problem for flows in canal networks and further conditions at junctions are required. A possibility consists in considering Riemann solvers at the junction to guarantee the admissibility of solutions \cite{garavellopiccoli2009}. In \cite{brianipiccoli2016} the authors solve the problem analytically, restricting to the case of subcritical case, necessary to provide a good definition of solution on the network without restrictions. The treatment of supercritical regime is more challenging, due to the absence of unique solutions in all cases. In \cite{brianipiccoli2018}, the authors propose a method based on a detailed analysis of the geometry of Lax curves, looking at the intersection of the admissibility regions in the subcritical set. This strategy allows to recover well-posedness of the problem, but it disregards other possible existing solutions.\\
The goal of this paper is extending the strategy of Briani, Natalini and Ribot \cite{BNR2025}, by proposing a numerical scheme based on relaxation methods to avoid the use of nonlinear Riemann solvers at the junction \cite{jinxin1995, aregba2000discrete}. The idea relies on a Bhatnagar-Gross-Krook (BGK) approximation of the macroscopic equation, where both the source term depend singularly on a relaxation parameter. The main advantage of relaxation schemes is their structure, which is very likely for numerical and theoretical purpose \cite{aregba2000discrete, aregba1996}. The nonlinearity inside the derivatives is replaced by a semilinearity: the differential part becomes linear and all the nonlinearity is concentrated inside the source term. The discrete BGK formulation consists in a linear advection with a relaxation-type source term, which implies that for the homogeneous equation the solution of the Riemann problem at the junction can be explicitly computed, without any restrictions.\\
The presence of source terms in the model may cause problems due to errors arising in the numerical approximation. Several techniques have been proposed to overcome the difficulty of approximating solutions close to steady states. Well-balanced techniques \cite{greenberg1996, gosse2000, gosse2001, perthame2001, gossetoscani2003, audusse2004, noelle2006, muller2013, pupposemplice2016} have been designed to exactly preserve particular steady-state solutions of the system. Indeed, small perturbations around a steady state may cause large spurious numerical errors if the scheme does not catch the equilibrium correctly. In the particular framework of shallow waters and blood flows, we have adapted to the case of networks the strategy proposed in \cite{audusse2004, delestre2013}, based on reconstruction at the interface.

\subsubsection*{Aim of the paper}
In this manuscript we focus on the construction of a numerical scheme for the solution of hyperbolic models in networks, based on discrete-BGK approximation. Inspired by the strategy proposed in \cite{BNR2025}, this approach is used for the approximation of shallow water equations in canal networks and blood flow equations in arterial networks. Subcritical and supercritical regimes can be easily investigated, without the use of approximate Riemann solvers. Differently from other works in the literature, we take into account also source terms, by employing well-balanced techniques \cite{audusse2004, delestre2013} in the case of networks. Numerical properties of the scheme like preservation of the mass, positivity of the density, discrete entropy dissipation and preservation of the steady state, are investigated and validated through numerical simulations.

\subsubsection*{Structure of the paper}
The rest of the paper is organized as follows. In Section \ref{Section:MathModels} we introduce the mathematical models analyzed here, the shallow water equations and the blood flow equations, detailing the properties of the governing systems. In Section \ref{flows_canal_network} we describe the main properties of flows in canal networks, focusing on the junction conditions. Section \ref{Section:Discrete_Kinetic} is devoted to the construction of relaxation schemes on networks, including a theoretical analysis of the main properties. In Section \ref{Section:JRP} we define the junction Riemann problem for different configurations which will be investigated in the numerical simulations. Finally, in Section \ref{Section:Numerical_Simulations} we illustrate the results with some numerical tests, both for the shallow water system and for the blood flow system.

\section{Mathematical Models}
\label{Section:MathModels}
\subsection{The Shallow Water Equations}
The Saint-Venant model for shallow water flows is a hyperbolic system of conservation laws introduced to describe stiff phenomena such as rivers or coastal areas flows, hydraulic jumps and dam breaks. Let us consider a system describing the water propagation in a canal with rectangular cross-section and constant slope. In particular, the conservative variable are given by the water height $h$ and the discharge $h\,v$ whereas the pressure term is assumed to be the hydrostatic pressure. The system in the one-dimensional case reads as 
\begin{equation}
\label{shallow_system}
    \begin{cases}
        \partial_t h + \partial_x (hv) = 0,\\[7pt]
        \partial_t (hv) + \partial_x \left(hv^2+\displaystyle \frac{1}{2}gh^2 \right) = - gh \partial_x z,
    \end{cases}
\end{equation}
where $x$ denotes the location along the canal, $v$ is the water velocity at time t and position $x$, $g$ is the gravity constant, $z$ is the bottom-topography. For the purpose of this work, we neglect more complex effects, like two-dimensional effects, wind forces or Coriolis forces arising in a rotational frame \cite{audusse2021, desveaux2022}. \\
By defining the following quantities
\begin{equation}
    \label{flux_shallow}
    u := \displaystyle \begin{pmatrix}
        h\\[5pt] hv
    \end{pmatrix}, \qquad F(u) := \begin{pmatrix}
    hv\\[5pt] h\,v^2 + \displaystyle\frac{1}{2} g\,h^2
    \end{pmatrix}, \qquad \mathcal{S} (u) := \begin{pmatrix}
    0\\[5pt] -gh\,z_x
    \end{pmatrix} 
\end{equation}
we can rewrite system \eqref{shallow_system} in a compact form as
\begin{equation}
    \partial_t u + \partial_x F(u) = \mathcal{S} (u).
\end{equation}
In this case, the flux Jacobian is given by
\begin{equation}
	A(u)=F'(u)=
	\begin{pmatrix}
		0 & 1\\
		-v^2 + gh & 2v
	\end{pmatrix}.
\end{equation}
Its eigenvalues are
\begin{equation}
	\lambda_1=v-\sqrt{gh}, \qquad \lambda_2=v+\sqrt{gh},
\end{equation}
with corresponding eigenvectors
\begin{equation}
	r_1=\begin{pmatrix}
		1\\
		v-\sqrt{gh}
	\end{pmatrix}, \qquad 
	r_2=\begin{pmatrix}
		1\\
		v+\sqrt{gh}
	\end{pmatrix}.
\end{equation}
In general the eigenvalues $\lambda_i$ can be of either sign and when the velocity is smaller than the speed of the gravity waves, namely
\begin{equation}
\label{eq::chap4_subcritical_condition}
    |v| < \sqrt{gh},
\end{equation}
the fluid is said to be \textit{fluvial} or \textit{subcritical} and then one has 
\begin{equation}
    \lambda_1 (u) \, < \, 0 \, < \, \lambda_2 (u).
\end{equation}
Thus, in the fluvial regime there are two waves propagating in opposite directions.\\
If the velocity is greater than the speed of the gravity waves, namely
\begin{equation}
    |v| \geq \sqrt{g\,h},
\end{equation}
the fluid is said to be \textit{torrential} or \textit{supercritical} and one has
\begin{equation}
    0 \, < \, \lambda_1(u) \, < \, \lambda_2(u).
\end{equation}
In the torrential regime, both waves are propagating in the same direction. Finally, we have the critical case $\lambda_1(u) = \lambda_2(u)=0$.
We define $q=hv$ the \textit{discharge}, which is the conserved quantity of the system. These definitions are usually given in terms of \textit{Froude number}, which is the ratio $Fr = |v| / \sqrt{gh}$.\\

\subsection{The Arterial Network}
\label{Subsection::Blood_Flow}
    Let us consider the hyperbolic model proposed in \cite{canickim2003} to describe blood flow through the arterial vessel in the one-dimensional setting
	\begin{equation}
    \label{System_Arterial}
		\begin{cases}
			\partial_t a + \partial_x (av) = 0,\\
			\partial_t (av) + \partial_x (\alpha a v^2) + \displaystyle \frac{a}{\rho} \partial_x p(a) = -2 \displaystyle \frac{\alpha}{\alpha - 1} \nu v,
		\end{cases}
	\end{equation}
	where $x$ denotes the distance along the center line of the artery, $a$ is the section area of a vessel at position $x$ and time $t$, $v$ is the cross-sectional average fluid velocity, $\rho$ is the constant density, $\nu$ is the kinematic viscosity coefficient and $\alpha$ is a coefficient taking into account the correction due to the conservation of the momentum. In the model $p(a)$ is the fluid pressure function, depending on the cross-sectional area $a$. In particular, we consider the following constitutive relation linking $p$ and $a$:
	\begin{equation}
        \label{pressure_constitutive_arterial}
		p(a) = p_{ref} + \frac{h\,E}{(1-\sigma^2)R_0} \left( \sqrt{\frac{a}{a_0}} -1 \right) = p_{ref} + \beta  \left( \sqrt{\frac{a}{a_0}} -1 \right).
	\end{equation}
    Here, $R_0$ denotes the radius of the reference cylinder approximating the vessel, $a_0=R_0^2$, $h$ is the thickness of the arterial wall, $E$ is the Young's modulus of elasticity and $\sigma$ is the Poisson ratio. The external pressure $p_{ref}$ denotes the pressure at which the displacement of the arterial wall is zero ($a=a_0$). For the sake of simplicity, we will consider $p_{ref}=0$.\\
	Assuming smooth solutions $(a, a\,v)$, we can rewrite the system in conservation form 
	\begin{equation}
    \label{System_Arterial_Conservation}
		\begin{cases}
			\partial_t a + \partial_x q = 0,\\
			\partial_t q + \partial_x \left(\displaystyle \alpha \frac{q^2}{a} + \frac{1}{\rho} \pi(a) \right) = -2 \displaystyle  \frac{\alpha}{\alpha - 1} \nu \frac{q}{a},
		\end{cases}
	\end{equation}
	where now
	\begin{equation}
		q=a\,v \qquad \text{ and } \qquad \pi(a) = \int_{a_0}^a \tilde{a} p'(\tilde{a}) d\tilde{a}.
	\end{equation}
        Using the particular form of pressure we have defined in \eqref{pressure_constitutive_arterial}, we have
        \begin{equation}
        \label{pressure_pi_arterial}
            \pi(a) = \frac{1}{3} \beta \left(\sqrt{\frac{a^3}{a_0}} - a_0 \right).
        \end{equation}
	Thus, we define the following quantities
	\begin{equation}
        \label{flux_arterial}
		u := \displaystyle \begin{pmatrix}
			a\\[5pt] q
		\end{pmatrix}, \qquad F(u) := \begin{pmatrix}
		q\\[5pt] \displaystyle \alpha \frac{q^2}{a} + \frac{1}{\rho} \pi(a)
		\end{pmatrix}, \qquad \mathcal{S}(u) := \begin{pmatrix}
		    0,\\[5pt] -2 \displaystyle  \frac{\alpha}{\alpha - 1} \nu \frac{q}{a}
		\end{pmatrix}
	\end{equation}
    in order to rewrite in compact form the system as
    \begin{equation}
        \partial_t u + \partial_x F(u) = \mathcal{S} (u). 
    \end{equation}
    We remark that systems \eqref{System_Arterial} and \eqref{System_Arterial_Conservation} are not equivalent if the solutions are not \textit{smooth}, namely for solutions involving jumps discontinuities.\\
    A further simplification of system \eqref{System_Arterial_Conservation} leads to
    \begin{equation}
    \label{System_Arterial_SaintVenant}
		\begin{cases}
			\partial_t a + \partial_x (av) = 0,\\
			\partial_t v + \partial_x \left(\displaystyle \alpha \frac{v^2}{2} + \frac{1}{\rho} \pi(a) \right) = -2 \displaystyle  \frac{\alpha}{\alpha - 1} \nu v,
		\end{cases}
	\end{equation}
    where in the particular case of $\alpha=1$ (which requires $\nu=0$), the quantity in the spatial derivative of the equation for $v$ is the \textit{total pressure} rescaled w.r.t. the density $\rho$.\\
    Let us focus on the full system \eqref{System_Arterial_Conservation} in conservation form and let us consider the flux Jacobian matrix $A(u)$:
	\begin{equation}
		A(u) = F'(u) = \begin{pmatrix}
			0 & 1\\  -\displaystyle \alpha \frac{q^2}{a^2} + \frac{1}{\rho} \pi'(a) & \displaystyle \frac{2 \alpha q}{a}
		\end{pmatrix}.
	\end{equation} 
	The eigenvalues of this matrix are given by
	\begin{equation}
		\begin{aligned}
		\lambda_{1,2} = &\alpha \frac{q}{a} \pm \sqrt{\alpha^2 \frac{q^2}{a^2} - \alpha \frac{q^2}{a^2} + \frac{1}{\rho} \pi'(a)} = \\ =&\alpha \frac{q}{a} \pm \sqrt{\alpha(\alpha - 1) \frac{q^2}{a^2} + \frac{1}{\rho} \pi'(a)}.
		\end{aligned}
	\end{equation}
	with corresponding eigenvectors
	\begin{equation}
    \label{eigenvectors_blood}
		r_{1,2}=\begin{pmatrix}
			1\\[8pt]
			\displaystyle \alpha \frac{q}{a} \pm \sqrt{\alpha(\alpha - 1) \frac{q^2}{a^2} + \frac{1}{\rho} \pi'(a)}
		\end{pmatrix}.
	\end{equation}
	The eigenvalues and the eigenvectors depend on $q$, so the waves appearing in the Riemann problem move at different speed and may be jump discontinuities or smoothly varying rarefaction waves. Also for this system, we can identify different regimes, according to the signs of the eigenvalues $\lambda_i$. When the velocity is smaller than the Moens Korteweg wave propagation speed \cite{pedley1980}, namely
    \begin{equation}
    \label{eq::chap4_subcritical_condition}
        |v| < c := \sqrt{\frac{\beta \sqrt{A}}{2\rho\sqrt{\pi}}},
    \end{equation}
    the flow in arteries is said to be \textit{subcritical} and then one has 
    \begin{equation}
        \lambda_1 (u) \, < \, 0 \, < \, \lambda_2 (u).
    \end{equation}
    If the velocity is greater than the Moens Korteweg propagation speed, namely
    \begin{equation}
        |v| \geq c,
    \end{equation}
    the flow in arteries is said to be \textit{supercritical} and one has
    \begin{equation}
        0 \, < \, \lambda_1(u) \, < \, \lambda_2(u).
    \end{equation}
    The supercritical regime is not really a relevant case for flows in arteries and it will not be investigated in the numerical simulations \cite{pedley1980, delestre2013}.
    Finally, as for the shallow water equations, we identify the critical case as $\lambda_1(u) = \lambda_2(u)=0$.

\section{Flows in Canal Networks}
\label{flows_canal_network}
Canal networks are described by topological graphs, namely couples $(\mathcal{I}, \mathcal{J})$, where $\mathcal{I}$ is a collection of intervals representing canals (usually $\mathcal{I} \subset \R$) and $\mathcal{J}$ is a collection of vertices representing junctions (or nodes). To study the interaction of waves described by
the system in canal networks, we need to focus on the behavior at the junction: the solutions holding along each canal must be coupled at the junction by imposing some physically reasonable coupling conditions. For the sake of simplicity, we restrict the discussion to the case of a single junction and we assume that different canals are connected at $x=0$. The typical conditions are:
\begin{itemize}
\item the \textit{conservation of mass} at the node, and

\item the \textit{continuity of flow pressure} (or even total pressure) at the node.
\end{itemize}
Let us suppose that there are $n$ canals meeting at a node, which do not necessarily point away from the node. The continuity
of mass flux at a node for the shallow water system is then equivalent to
\begin{equation}
  \sum_{i: \text{incoming}} (hv)_i = \sum_{j:\text{outgoing}} (hv)_j, \qquad \forall \, t >0.
\end{equation}
The parametrization typically reflects the direction of flow. The conservation of mass can be easily extended to the case in which the canals have different cross sections. Let $\alpha_k$ be the constant cross section of the $k$-th canal $\mathcal{I}_k$, the momentum is then given by $q_k=\alpha_k\, h_k, \, v_k$ and we can rewrite the continuity of mass flux as 
\begin{equation}
\label{junction_mass_cons_shallow}
  \sum_{i: \text{incoming}} (\alpha \,hv)_i = \sum_{j:\text{outgoing}} (\alpha \, hv)_j, \qquad \forall \, t >0.
\end{equation}
For the arterial network, the continuity of mass flux becomes essentially
\begin{equation}
\label{junction_mass_cons_arterial}
  \sum_{i: \text{incoming}} (av)_i = \sum_{j:\text{outgoing}} (av)_j, \qquad \forall \, t >0.
\end{equation}
The continuity of flow pressure (or total pressure) at the node is usually given by the coupling condition which translates in 
\begin{equation}
\label{junction_pressure}
    p_k = p_\ell, \qquad \forall\, t>0,
\end{equation}
which, for the shallow water equations, becomes
\begin{equation}
\label{junction_pressure_shallow}
    \frac{1}{2} g h_k^2 = \frac{1}{2} g h_\ell^2, \qquad \forall\, t>0.
\end{equation}
An alternative choice to the continuity of the flow pressure is given by the energy continuity, namely for the shallow water equations 
\begin{equation}
    h_k+\frac{v_k^2}{2g} = h_\ell+\frac{v_\ell^2}{2g}, \qquad \forall\, t>0.
\end{equation}
Which conditions are used often depends on the regime of the flow (subcritical or supercritical). We suggest referring to \cite{bressanpiccoli2014} therein. In general, these conditions are not sufficient to get a well-posed problem. Most of the literature considers extra conditions by imposing the correct physical wave at the junction, through the solution of the Riemann problems \eqref{Phi_L_Riemann}-\eqref{Phi_R_Riemann} (for shallow waters) and \eqref{Psi_L_Riemann}-\eqref{Psi_R_Riemann} (for arterial blood flows). This choice restricts the analysis to the case of \textit{subcritical} conditions, with very few extensions to the case of \textit{supercritical} flows, only in the case of simple networks see \cite{brianipiccoli2018}. The idea is avoiding the resolution of a Riemann problem at the junction at each iteration. To this aim, following \cite{BNR2025}, we propose a relaxation approximation of the system to recover more conditions on the state variable at the junction and obtain a well-posed problem to solve.\\
From now on, we introduce the following notation, which allows dealing more complex networks. Each canal $i$ is parametrized in $[0,L_i]$, where the junction coincides with $x=L_i$ for the incoming canals and with $x=0$ for the outgoing ones. In particular, when we describe a 2-canal network, the quantities linked to the left incoming canal (resp. the right outgoing canal) are denoted with subscript $\ell$ (resp. $r$).\\
We can rewrite explicitly systems \eqref{shallow_system}-\eqref{System_Arterial} on a 2-canal network in compact form as
\begin{equation}
\label{general_2canal}
    \begin{cases}
        \partial_t u_\ell + \partial_x F(u_\ell) = \mathcal{S} (u_\ell), \qquad \text{for } \,\, x \in [0, L_\ell],\\
        \partial_t u_r + \partial_x F(u_r) = \mathcal{S} (u_r), \qquad \text{for } \,\, x \in [0, L_r].
    \end{cases}
\end{equation}
Finally, each system  has to be complemented by initial condition and appropriate boundary conditions on the external boundary of the network. In particular, for the shallow water system \eqref{general_2canal} with \eqref{flux_shallow}, let us consider no-flux boundary conditions at the external points of the canals, namely for any incoming canal in the interval $[0, L_\ell]$
\begin{equation}
\label{boundary_ingoing}
    \partial_x h_i(t, 0) = 0, \quad v_i(t, 0) = 0, \qquad \text{for all } t \geq 0
\end{equation}
and for any outgoing canal in the interval $[0,L_r]$ 
\begin{equation}
\label{boundary_outgoing}
    \partial_x h_j(t, L_r) = 0, \quad v_j(t, L_r) = 0, \qquad \text{for all } t \geq 0.
\end{equation}
Analogously, for the blood flow system \eqref{general_2canal} with \eqref{flux_arterial}, we complement the system with the same boundary conditions, where the height $h$ is replaced by the section area $a$.

\subsection{Entropy Dissipation}
Let us investigate some properties of the systems \eqref{shallow_system}-\eqref{System_Arterial}, proving an entropy dissipation property at the continuous level at least for smooth solutions.
\subsubsection{Shallow Water Equations}
Let us consider first the shallow water equations \eqref{shallow_system} without bottom topography, which satisfies an entropy dissipation property at the continuous level. Indeed, for each canal, for admissible solutions it holds the following inequality
\begin{equation}
    \partial_t \eta(U) + \partial_x G(U) \leq 0,
\end{equation}
where the entropy-entropy flux pair $(\eta, G)$ are defined as
\begin{equation}
\label{entropy_pair}
    \eta(U) = \frac{1}{2} h v^2 + \frac{g}{2} \, h^2, \qquad G(U) = \left(\frac{1}{2} h\,v^2 + g \, h^2  \right) v.
\end{equation}
This means that on each canal with no flux boundary conditions, the quantity
\begin{equation}
    \int_0^L \eta(U(t,x)\,dx
\end{equation}
is decreasing in time.\\
Let us now consider smooth solutions on a 2-canal network $(\rho_\ell,v_\ell)$ and $(\rho_r,v_r)$, respectively. Then, it holds
\begin{multline}
    \int_0^{L_\ell} \eta(U_\ell (t,x))\,dx + \int_0^{L_r} \eta(U_r(t,x))\,dx \\ \leq \int_0^{L_\ell} \eta(U_\ell (0,x))\,dx + \int_0^{L_r} \eta(U_r(0,x))\,dx \\ - \int_0^t \left[ G(U_\ell (s,L_\ell)) - G(U_r(s,L_r)) \right]\,ds \\ - \int_0^t \left[ G(U_\ell (s,0)) - G(U_r(s,0)) \right]\,ds.
\end{multline}
Using boundary conditions \eqref{boundary_ingoing}-\eqref{boundary_outgoing} and junction conditions \eqref{junction_mass_cons_shallow}-\eqref{junction_pressure_shallow} at the junction node, we easily recover entropy dissipation, namely for all $t>0$ we have
\begin{equation}
\label{entropy_inequality_junction_shallow}
     \int_0^{L_\ell} \eta(U_\ell (t,x))\,dx + \int_0^{L_r} \eta(U_r(t,x))\,dx \leq \int_0^{L_\ell} \eta(U_\ell (0,x))\,dx + \int_0^{L_r} \eta(U_r(0,x))\,dx.
\end{equation}
Indeed, thanks to the boundary conditions on the external domain, it holds
\begin{equation}
    G(U_r(s, L_r)) = G(U_\ell(s,0)) = 0,
\end{equation}
and, using conditions at the junction, the quantity
\begin{multline}
    G(U_\ell(s, L_\ell)) - G(U_r(s,0)) = \frac{1}{2} \left( \frac{(q_\ell(s,L_\ell))^3}{\rho_\ell(s,L_\ell))^2} - \frac{(q_r(s,0))^3}{\rho_r(s,0))^2} \right) \\+ g \left( \rho_\ell(s,L_\ell) q_\ell (s,L_\ell) - \rho_r (s,0) q_r (s,0) \right) = 0.
\end{multline}
In presence of bottom topography, considering the source term $\mathcal{S}$ in \eqref{flux_shallow}, the entropy-entropy flux pair is corrected as \cite{bouchut2004}
\begin{equation}
    \tilde{\eta}(U,z ) = \eta(U) + h g S, \qquad \tilde{G}(U) = G(U) + g\,q\,z,
\end{equation}
which satisfies on each canal
\begin{equation}
    \partial_t \tilde{\eta}(U) + \partial_x \tilde{G}(U) \leq 0,
\end{equation}
For a 2-canal network, we recover entropy dissipation by noticing that Eq. 
\eqref{entropy_inequality_junction_shallow} still holds for $\left( \tilde{\eta}, \tilde{G} \right)$, since the bottom-topography $z$ is continuous at the junction.

\subsubsection{Blood Flow Equations}
Let us now consider the blood flow equations \eqref{shallow_system}, which satisfies an entropy dissipation property at the continuous level for the case $\nu = 0$. Indeed, for each canal, for admissible solutions it holds the following inequality
\begin{equation}
    \partial_t \eta(U) + \partial_x G(U) \leq 0,
\end{equation}
where for the case $\nu=0$ with pressure \eqref{pressure_pi_arterial}, the entropy-entropy flux pair $(\eta, G)$ are defined 
\begin{equation}
    \eta(U) = \frac{1}{2} a v^2 + \frac{\kappa}{\gamma-1}\,a^\gamma, \qquad G(U) = \left(\frac{1}{2} a\,v^2 + \frac{\gamma\,\kappa}{\gamma-1}\,a^\gamma - a_0^ \gamma \kappa \right) v,
\end{equation}
where $\gamma= \displaystyle\frac{3}{2}$ and $\kappa = \displaystyle\frac{1}{3} \frac{\beta}{\sqrt{a_0}}$.\\
If we consider again smooth solutions on a 2-canal network $(a_\ell, v_\ell)$ and $(a_r, v_r)$ respectively, then the entropy dissipation is recover analogously to the shallow water case above.\\
In the case $\nu \neq 0$ the equation for the flux pair in the arterial network also includes an entropy production due to viscous dissipation
\begin{equation}
    \sigma(U) = 2 \displaystyle \frac{\alpha}{\alpha - 1} \nu v^2.
\end{equation}
This leads to the corrected entropy inequality
\begin{equation}
    \partial_t \eta(U) + \partial_x G(U) \leq \sigma(U)\,v,
\end{equation}
as already observed in \cite{lizhao2011}.

\section{Relaxation schemes on Networks}
\label{Section:Discrete_Kinetic}
Let us introduce a two-velocities relaxation scheme for the approximation of general flows in canal networks. For the sake of completeness, we first recall some generalities about discrete kinetic scheme for a one-dimensional scalar conservation law. This approach is then adapted to the case of a $2$-canal network for the shallow water system \eqref{shallow_system} and the blood flow equations \eqref{System_Arterial_SaintVenant} with proper junction conditions. Extensions to more complex networks are detailed in Section.

\subsection{Discrete kinetic schemes}
Let us consider a general Cauchy problem  for a one-dimensional quasi-linear conservation law without source term
\begin{equation}
\label{Cons_Law_Scalar}
    \begin{cases}
        \partial_t u + \partial_x A(u) = 0,\\
        u(x,0)=u_0(x).
    \end{cases}
\end{equation}
Let us consider a class of numerical schemes based on a discrete kinetic approximation \cite{aregba2000discrete}, which enables to approximate \eqref{Cons_Law_Scalar} by a sequence of semi-linear hyperbolic systems, through a BGK relaxation equation, leading to
\begin{equation}
    \label{semi-linear general}
    \partial_t f^\eps + \Lambda \partial_{x} f^\eps=\frac{1}{\eps} \Big(\mathcal{M}(u^\eps)-f^\eps\Big),
\end{equation}
where the $u^\eps$ variable is given by
\begin{equation}
	\label{u_eps_BGK}
	u^\eps:=\displaystyle \sum_{j=1}^L f_j^\eps.
\end{equation}
The parameter $\eps$ is a positive number, $\Lambda$ is a real diagonal matrix $L\times L$, where $L$ is the number of discrete velocities, $f^\eps=(f_1^\eps,\dots,f_L^\eps)$ and $\mathcal{M}: \R^k \to \R^L$ is a Lipschitz continuous function.\\
The numerical scheme presented in \cite{aregba2000discrete} is constructed by splitting \eqref{semi-linear general} into a homogeneous linear part and an ordinary differential system, exactly solved thanks to the particular structure of the relaxation term.\\
In the following, we restrict the analysis to the case of $L=2$ discrete velocities. The system \eqref{semi-linear general} can be then rewritten as
\begin{equation}
\label{Relaxation_System}
    \begin{cases}
        \partial_t f_1 + \lambda_1\,\partial_x f_1 = \displaystyle \frac{1}{\eps} \left(\mathcal{M}_1 - f_1 \right),\\[10pt]
        \partial_t f_2 + \lambda_2\,\partial_x f_2 =\displaystyle \frac{1}{\eps} \left(\mathcal{M}_2 - f_2 \right),
    \end{cases}
\end{equation}
where $f_j = f_j(x,t) \in \R^2$ and $\lambda_1$, $\lambda_2$ are two parameters to be chosen. Let us introduce Lipschitz continuous Maxwellian functions $\mathcal{M}_i :\R \to \R^2$, which embed the macroscopic variable into the kinetic one. The convergence of the kinetic model to the macroscopic one needs some further assumptions, in particular for any $u \in \R^2$ we must impose the compatibility conditions
\begin{equation}
\label{Compatibility_Conditions}
    \begin{cases}
        \mathcal{M}_1(u) + \mathcal{M}_2(u) = u,\\
        \lambda_1 \, \mathcal{M}_1(u) + \lambda_2 \, \mathcal{M}_2(u) = A(u).
    \end{cases}
\end{equation}
The first property \eqref{Compatibility_Conditions}$_1$ tells us that if we consider a macroscopic variable and we embed it in the kinetic space through $\mathcal{M}_i$, when we project it back, we obtain the original state thanks to \eqref{u_eps_BGK}. The second property \eqref{Compatibility_Conditions}$_2$ is necessary to guarantee the original macroscopic fluxes.\\ 
The compatibility conditions give explicit expressions for the Maxwellian functions
\begin{equation}
    \mathcal{M}_1 = \frac{\lambda_2\,u - F(u)}{\lambda_2-\lambda_1}, \qquad \mathcal{M}_2 = \frac{\lambda_1\,u - F(u)}{\lambda_1-\lambda_2}. 
\end{equation}
Let us denote by $\Delta x$ the spatial discretization step and by $\mathcal{C}_i$ the $j$-th cell of the finite volume discretization
\begin{equation}
    \mathcal{C}_i = [x_{i-1/2}, x_{i+1/2}] \qquad 1 \leq i \leq N,
\end{equation}
where $x_i = \left(i - \frac{1}{2} \right) \Delta x$ and $N$ is the number of cells used to discretize the interval. Furthermore, let $\Delta t$ be the time step and $t_n = n\Delta t$, $n \in \N$ the discrete times. We finally denote by $ f_\Delta^n \approx f(x_i,t_n)$ and $ U_\Delta^n \approx U(x_i,t_n)$ suitable approximations of $f$ and $U$, respectively, on the discretization grid $\Delta = \{x_i\}$. \\
For a given $f_\Delta^{\eps,n}$, the function $f_\Delta^{\eps, n+\frac{1}{2}}$ is an approximate solution at time $t_{n+1}$ of the problem 
\begin{equation}
\label{Homogeneous_Kinetic}
    \begin{cases}
    \partial_t f_1 + \lambda_1 \partial_x f_1 = 0,\\[10pt]
    \partial_t f_2 + \lambda_2 \partial_x f_2 = 0,\\[10pt]
    f(t_n) = f_\Delta^{\eps, n},
    \end{cases}
\end{equation}
The numerical scheme on the linear part will be denoted by $H_\Delta$, such that
\begin{equation}
    f_\Delta^{\eps, n+\frac{1}{2}} = H_\Delta (\Delta t)\, f_\Delta^{\eps, n}.
\end{equation}
Since relaxation system \eqref{Relaxation_System} consists of two coupled linear advection equations, an exact Riemann solver could be provided at the interface of two cells
\begin{equation}
\label{exact_Riemann_solver}
    \mathcal{R}(x/t, f^-, f^+) = \begin{cases}
        (f_1^-, f_2^-) \qquad \text{if } \quad &x/t < \lambda_1,\\
        (f_1^+, f_2^-) \qquad \text{if } \quad  \lambda_1<&x/t < \lambda_2,\\
        (f_1^+, f_2^+) \qquad \text{if } \quad \lambda_2<&x/t,\\
    \end{cases}
\end{equation}
where $f_j^\pm \approx \mathcal{M}_j(u^\pm)$. According to Bouchut \cite{bouchut2004}, $\lambda_1$ and $\lambda_2$ have to be chosen as the eigenvalues of the flux of the original system \eqref{Cons_Law_Scalar}, in order to recover entropy dissipation relations and stability of the method.\\
The contribution of the singular perturbation term is taken into account by solving on $[t_n, t_{n+1}]$ the ordinary differential system 
\begin{equation}
    F' = \frac{1}{\eps} \left( \mathcal{M} (u) - F \right), \qquad u = \sum_{j=1}^L F_j.
\end{equation}
Taking $\eps=0$, the final system for $L=2$ discrete velocities reads as follows
\begin{equation}
\label{Relaxation_Final}
    \begin{cases}
    f_\Delta^{n+\frac{1}{2}} = H_\Delta (\Delta t) f_\Delta^{n}\\[10pt]
    U^{n+1} = f_1^{n+\frac{1}{2}}+f_2^{n+\frac{1}{2}},\\[10pt]
    f_j^{n+1} = \mathcal{M}_j \left(U^{n+1}\right), \qquad j=1,\,2.
    \end{cases}
\end{equation}
Rewriting the scheme in terms of the macroscopic variable $U^n_\Delta$, we have
\begin{equation}
    U_i^{n+1} = U_i^n - \frac{\Delta t}{\Delta x} \left( \mathcal{F}^n_{i+1/2} - \mathcal{F}^n_{i-1/2}\right),
\end{equation}
where $\mathcal{F}^n_{i+1/2} = \mathcal{F}(U^n_i, U^n_{i+1})$. The conservative flux is equivalent by construction to \cite{bouchut2004, BNR2025}
\begin{equation*}
\label{flux_HHL_bouchut}
    \mathcal{F}(U^-,U^+) = \begin{cases}
        F(U^-), \qquad &\text{if } \, 0 < \lambda_1,\\
        \displaystyle \frac{\lambda_2 F(U^+) - \lambda_1 F(U^-)}{\lambda_2-\lambda_1}+ \frac{\lambda_1\,\lambda_2}{\lambda_2-\lambda_1} (U^+-U^-) \qquad &\text{if } \, \lambda_1 < 0 < \lambda_2\\
        F(U^-), \qquad &\text{if } \, \lambda_2 < 0,\\
    \end{cases}
\end{equation*}
where $F$ is the continuous flux defined in \eqref{flux_shallow}-\eqref{flux_arterial}.\\
The main advantage of this approximation is the possibility of avoiding the resolution of local Riemann problems in the design of numerical schemes. The particular formulation of the homogeneous system \eqref{Homogeneous_Kinetic} is suitable for networks in the case $L=2$, as shown in the following subsection.

\begin{rem}
    The focus of this work is the numerical treatment of the junction with relaxation schemes. To this aim, the source term is treated by directly projecting the kinetic distribution function $f_j$ onto the Maxwellian space at each time step. Higher-order approximations of discrete kinetic schemes could be obtained by considering the source term with very small $\eps$, for instance using an IMEX approach \cite{pareschirusso2005, boscarino2025}, DeC-IMEX \cite{torloabgrall2020} or projective integration method \cite{lafittemelis2017, tenna2025}.
\end{rem}

\subsection{Discrete kinetic schemes on a 2-canal network}
Let us focus on the case of a simple network with two canals, see Fig. \ref{fig:2canalnetwork}. We denote again $\Delta x$ the spatial discretization step and $\Delta t$ the time discretization step. Let us introduce the following notation for the left canal (respectively, right canal)
\begin{equation}
     \mathcal{C}_{i, \ell} = [x_{i-1/2, \ell}, x_{i+1/2, \ell}], \quad 1 \leq i \leq N_{\ell} \qquad \left( \text{respectively } \mathcal{C}_{i,r} = [x_{i-1/2, r}, x_{i+1/2, r}], \quad 1 \leq i \leq N_{r} \right),
\end{equation}
where $N_\cdot$ is the number of the spatial discretization steps and $x_{i,\cdot} = \left( j - \frac{1}{2} \right) \Delta x$ are the centers of the cells, for $\cdot = \ell, r$ respectively.\\
On each canal we solve the original system by considering the corresponding relaxation system \eqref{Relaxation_System}, where we denote $\lambda_\ell$ and $\lambda_r$ the discrete velocities for the left canal and the right canal, respectively. The numerical approximation is complemented with boundary conditions \eqref{boundary_ingoing}-\eqref{boundary_outgoing} at the 
boundary cells of the network $\mathcal{C}_{0,\ell}$ and $\mathcal{C}_{N_r,r}$, namely
\begin{equation}
\label{boundary_ingoing_discrete}
    h_{0,\ell} = h_{1, \ell} \quad \text{ and } \quad q_{0,\ell} = -q_{1, \ell}
\end{equation}
\begin{equation}
\label{boundary_outgoing_discrete}
    h_{N_r,r} = h_{N_r - 1, r} \quad \text{ and } \quad q_{N_r,r} = -q_{N_r-1, r}
\end{equation}

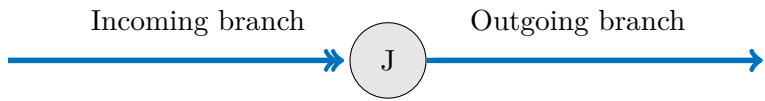
\begin{figure}[ht]
\centering
\begin{tikzpicture}[
    canal/.style={line width=2pt, gemblue},
    node/.style={circle, draw=black, fill=gray!20, minimum size=1cm},
    ->,
    shorten >=2pt
  ]

  \node[node] (junction) at (0,0) {J};
  \coordinate (in) at (-5,0);
  \coordinate (out) at (5,0);

  \draw[canal] (in) -- (junction);
  \draw[canal] (junction) -- (out);

  \draw[canal,->] (-5,0) -- (-0.6,0); 
  \draw[canal,->] (0.6,0) -- (5,0);   

  \node[above] at (-2.5,0.2) {Incoming branch};
  \node[above] at (2.5,0.2) {Outgoing branch};

\end{tikzpicture}
\caption{Simple 2-canal network with one incoming and one outgoing branch, where $J$ denotes the junction.}
\label{fig:2canalnetwork}
\end{figure}

\subsubsection{Numerical treatment of the junction}
\label{Treatment_Junction}
As pointed out in Section \ref{flows_canal_network}, conditions on mass conservation and pressure continuity are not sufficient to get a well-posed problem, since we have $4$ unknowns and $2$ equations. Indeed, at the junction, the mass conservation property at the discrete level becomes
\begin{equation}
\label{junction_mass_cons_discrete}
    h_\ell^*\,v_\ell^* = h_r^*\,v_r^*,
\end{equation}
and the continuity of the flow pressure at the node is given by the coupling condition \cite{brianipiccoli2016, olufsen2000numerical}
\begin{equation}
\label{junction_pressure_discrete}
    p_\ell^* = p_r^*. 
\end{equation}
In the shallow water equations, we recall that condition \eqref{junction_pressure_discrete} implies $h_\ell^* = h_r^*$. In the blood flow equations, $p$ refers to the fluid pressure function in \eqref{pressure_constitutive_arterial}.\\
Inspired by \cite{BNR2025}, we observe that the kinetic approximation gives implicitly supplementary conditions to be satisfied, in order to find a physically correct solution. Indeed, let us suppose for the sake of simplicity that $\lambda_1 < 0 <\lambda_2$, by imposing $\lambda = \lambda_1 = -\lambda_2$, where $\lambda > 0$ could be chosen as $\lambda = \max \left\{ \displaystyle \partial F(u)/\partial u \right\}$. Therefore, by considering the exact Riemann solver \eqref{exact_Riemann_solver} in $x/t = 0$, we obtain that $f_2$ on the the left canal and $f_1$ on the the right canal must be preserved. In terms of the unknown state $(f_1^*, f_2^*)_{\{r,\,\ell\}}$, this results in the following two extra conditions
\begin{equation}
    f_{2,\, \ell}^* = f_{2,\, \ell}^-,
\end{equation}
\begin{equation}
    f_{1,\, \ell}^* = f_{1,\, r}^+.
\end{equation}
This condition is essential to have admissible solutions to the exact Riemann problem at the kinetic level. In other words, this implies that the solution at the junction is determined such that for each $r$ and $\ell$, the intermediate states are connected to the initial condition through physically correct waves. In terms of the macroscopic quantities, by projecting back the kinetic variable through the Maxwellian function, we have
\begin{equation}
    \mathcal{M}_{2}(u_{\ell}^*) = \mathcal{M}_{2}(u_{\ell}^-),
\end{equation}
\begin{equation}
    \mathcal{M}_{1}(u_{r}^*)=\mathcal{M}_{1}(u_{r}^+).
\end{equation}
Let us first focus on the shallow water system \eqref{shallow_system}. According to the choice of the components of the Maxwellian to use in order to introduce the supplementary conditions, we obtain two different systems
\begin{equation}
    \begin{cases}
    \displaystyle \frac{1}{2} \left( h_\ell^* + \displaystyle \frac{q_\ell^*}{\lambda_\ell} \right) = \frac{1}{2} \left( h_\ell^- + \displaystyle \frac{q_\ell^-}{\lambda_\ell} \right),\\[20pt]
    \displaystyle \frac{1}{2} \left( h_r^* + \displaystyle \frac{q_r^*}{\lambda_r} \right) = \frac{1}{2} \left( h_r^+ + \displaystyle \frac{q_r^+}{\lambda_r} \right)
    \end{cases}
\end{equation}
or
\begin{equation}
    \begin{cases}
    \displaystyle \frac{1}{2} \left( q_\ell^* + \displaystyle \frac{ \displaystyle \frac{q_\ell^*}{h_\ell^*} + \displaystyle \frac{1}{2}\,\,(h_\ell^*)^2}{\lambda_\ell} \right) = \displaystyle \frac{1}{2} \left( q_\ell^- + \frac{\displaystyle \frac{q_\ell^-}{h_r^-} + \frac{1}{2}\,\,(h_\ell^-)^2}{\lambda_\ell} \right),\\[28pt]
    \displaystyle \frac{1}{2} \left( q_r^* + \displaystyle \frac{ \displaystyle \frac{q_r^*}{h_r^*} + \displaystyle \frac{1}{2}\,\,(h_r^*)^2}{\lambda_r} \right) = \displaystyle \frac{1}{2} \left( q_r^+ + \frac{\displaystyle \frac{q_r^+}{h_r^+} + \displaystyle \frac{1}{2}\,\,(h_r^+)^2}{\lambda_r} \right)
    \end{cases}
\end{equation}
where $(h_\ell^*, q_\ell^*)$ and $(h_r^*, q_r^*)$, represent the values of the ghost cells at the junction to be used in the numerical scheme.\\
Let us exploit the explicit expression of the system to be solved. For notational convenience, we introduce the following $\Phi_\ell$ and $\Phi_r$ functions
\begin{equation}
\label{Phi_functions}
    \begin{cases}
    \Phi_\ell (h_\ell^*, q_\ell^*) = h_\ell^* + \displaystyle \frac{q_\ell^*}{\lambda_\ell} -  h_\ell^- - \displaystyle \frac{q_\ell^-}{\lambda_\ell},\\
    \Phi_r (h_r^*, q_r^*) = h_r^* + \displaystyle \frac{q_r^*}{\lambda_r} -  h_r^- - \displaystyle \frac{q_r^-}{\lambda_r}.
   \end{cases} 
\end{equation}

\subsection{Well-balanced schemes}
\label{Section:Well_Balanced}
One of the most problematic aspects of classical finite volume schemes is the approximation of solutions close to steady states, since the numerical scheme is usually unable to preserve the hydrostatic balance. More generally, the structure of numerical truncation errors is not compatible with the physical steady state conditions, leading to spurious oscillations. To this aim, in \cite{audusse2004, bouchut2004}, authors have introduced a general strategy to derive a well-balanced scheme, preserving steady states and other fundamental properties of the system.\\
Let us detail the well-balanced scheme both for shallow water and for blood flow equations.\\
This reconstruction procedure is performed at each time step $t^n$ after solving the system to determine values a the junction, before applying the discrete kinetic numerical scheme \eqref{Relaxation_Final}.

\subsubsection{Shallow Water Equations}
Let us recall that a steady state for the shallow water equations \eqref{shallow_system} is given by
\begin{equation}
	hv=\text{cst}, \quad \frac{1}{2}v^2+gw=\text{cst},
\end{equation}
where $w := h+z$. A particular equilibrium state is given by
\begin{equation}
	u \equiv 0 \quad \text{and} \quad w \equiv \text{cst},
\end{equation}
usually known as \textit{lake at rest}. In this case, the water is still, namely $u=0$, and the water height is constant in time, $h(x,t)=h(x)$. Using this assumption in \eqref{shallow_system}, we obtain
\begin{equation}
	\label{eq:HBE}
	\Big(\frac{1}{2}gh^2\Big)_x+ghz_x=0,
\end{equation}
usually called \textit{hydrostatic balance}. The first term is the hydrostatic pressure, describing the behavior of a column of water under the effect of gravity. The second term is the gravitational acceleration due to an inclined bottom topography $z$.\\ 
A finite volume scheme is \textit{well-balanced} if the steady state of a lake at rest is preserved. The difficulty is to get schemes satisfying also conservation properties, non-negativity of the water height $h$, able to compute dry states $h=0$ and satisfying a discrete entropy inequality.\\
Let us describe the strategy to preserve the hydrostatic balance, by considering a conservative finite volume discretization of the hydrostatic pressure in the cell $\Big[x_{i-\frac{1}{2}},x_{i+\frac{1}{2}} \Big]$ for a kinetic scheme. Let us rewrite the kinetic relaxation system \eqref{Relaxation_System} with the Maxwellian compatibility conditions \eqref{Compatibility_Conditions} as
\begin{equation}
    \label{Relaxation_WellBalanced}
    U_i^{n+1} = U_i^n - \frac{\Delta t}{\Delta x} \left( \mathcal{F}^n_{i+1/2} - \mathcal{F}^n_{i-1/2}\right),
\end{equation}
with the numerical flux $\mathcal{F}^n_{i+1/2}$ defined in \eqref{flux_HHL_bouchut}.\\
For a hydrostatic balance, let us suppose the solution $u^n$ is at \textit{lake at rest}, namely
\begin{equation}
    h^n+z^n = h^0+z^0, \qquad v^n = 0. 
\end{equation}
This implies that 
\begin{equation}
    U_i^{n+1} = \begin{pmatrix}
        h_i^n,\\
        \displaystyle \frac{\Delta t}{\Delta x} \left[\displaystyle \frac{g}{2} \left(h^n_{i+\frac{1}{2}^-} \right)^2 - \displaystyle \frac{g}{2} \left(h^n_{i-\frac{1}{2}^+}\right)^2 \right] 
    \end{pmatrix} + \Delta t \, S_i^n
\end{equation}
In addition, let us suppose to discretize the source term as
\begin{equation}
	ghz_x \approx g \bar{h}\,Dz,
\end{equation}
where $\bar{h} \approx h$ and $Dz \approx z_x$. To enforce the hydrostatic balance at time $t^{n+1}$ we set
\begin{equation}
\label{Source_Term_WB}
	S^n_i=\begin{pmatrix}
		0\\
		\displaystyle\frac{g}{2} \left(h^n_{i+\frac{1}{2}^-}\right)^2-\displaystyle \frac{g}{2} \left(h^n_{i-\frac{1}{2}^+}\right)^2 
        \end{pmatrix},
\end{equation}
where we have represented the cell-averaged source term as the discrete gradient of the hydrostatic momentum flux. This ansatz is motivated by a balancing requirement in a nearly hydrostatic regime. Indeed, when $u \ll \sqrt{gh}$, the leading order water height adjusts in order to satisfy the balance of momentum flux and momentum source terms.\\ 
This discretization of the source term is the essential ingredient for a well-balanced formulation, if complemented with some continuity property holding at the equilibrium state.
Indeed, any hydrostatic state is kept exactly if, for such a state, the locally reconstructed heights satisfy
\begin{equation}
	\mathcal{F}_{i+\frac{1}{2}}^{hu}=\frac{1}{2}gh_{i+\frac{1}{2}^-}^2=\frac{1}{2}gh_{i+\frac{1}{2}^+}^2.
\end{equation}
The get a well-balanced scheme, the following hydrostatic reconstruction is introduced \cite{audusse2004, noelle2006}
\begin{equation}
\label{condition_positivity_wellbalanced_shallow1}
    h_{i+\frac{1}{2}^-} = \max \left(0, h_i + z_i - z_{i+\frac{1}{2}} \right), \qquad h_{i+\frac{1}{2}^+} = \max \left(0, h_{i+1} + z_{i+1} - z_{i+\frac{1}{2}}\right),
\end{equation}
\begin{equation}
\label{condition_positivity_wellbalanced_shallow2}
    z_{i+\frac{1}{2}} = \max (z_i, z_{i+1}),
\end{equation}
which is able to guarantee the nonnegativity of the water height.

\subsubsection{Blood Flow Equations}
Let us now consider the blood flow equations \eqref{System_Arterial_SaintVenant}. A steady state for this system is given by
\begin{equation}
	av=\text{cst}, \quad \frac{1}{2} \alpha \frac{q^2}{a} + \frac{\beta}{3\,\rho} \left( \sqrt{\frac{a^3}{a_0}} - a_0 \right)=\text{cst},
\end{equation}
Also in this case we can define a particular equilibrium state given by
\begin{equation}
	u \equiv 0 \quad \text{and} \quad w \equiv \text{cst},
\end{equation}
analogously known as \textit{man at eternal rest} \cite{delestre2013}. In this case, the blood is still, namely $u=0$, and it holds
\begin{equation}
    \partial_x \left(\frac{\beta}{3\rho} a^{3/2} \right) - \frac{\beta\, a}{\rho} \partial_x \sqrt{a_0} = 0, 
\end{equation}
having in this case $\sqrt{a} = \sqrt{a_0}$.\\
Using this assumption in \eqref{System_Arterial_SaintVenant} to construct a well-balanced scheme, we obtain
\begin{equation}
	\label{eq:Blood_Condition_WB}
	\sqrt{a}-\sqrt{a_0}=\text{cst},
\end{equation}
which will be used to reconstruct the variable $a$. 
Let us consider again a conservative finite volume discretization of the hydrostatic pressure in the cell $\Big[x_{i-\frac{1}{2}},x_{i+\frac{1}{2}} \Big]$ for a kinetic scheme. For the kinetic relaxation system \eqref{Relaxation_System} with the Maxwellian compatibility conditions \eqref{Compatibility_Conditions}, we obtain again \eqref{Relaxation_WellBalanced}. Now, the reconstructed values are given
\begin{equation}
\label{condition_positivity_wellbalanced_arterial}
    \begin{cases}
        \sqrt{a_{i+\frac{1}{2}^-}} = \max \left( \sqrt{a_i} + \min \left( \sqrt{a_{0, i+1}} - \sqrt{a_{0, i}}, 0 \right), 0 \right),\\
        \sqrt{a_{i+\frac{1}{2}^+}} = \max \left( \sqrt{a_{i+1}} - \max \left( \sqrt{a_{0, i+1}} - \sqrt{a_{0, i}}, 0 \right), 0 \right).
    \end{cases}
\end{equation}
Finally, the source term could be rewritten as
\begin{equation}
\label{Source_Term_WB_Arterial}
	S^n_i=\begin{pmatrix}
		0\\
		\displaystyle\frac{\beta}{3\rho} \left(a^n_{i+\frac{1}{2}^-}\right)^{3/2}-\displaystyle \frac{\beta}{3\rho} \left(a^n_{i-\frac{1}{2}^+}\right)^{3/2} 
        \end{pmatrix},
\end{equation}
The friction term appearing in \eqref{System_Arterial_SaintVenant}, could be treated implicitly to avoid numerical instabilities.

\subsection{Properties of the scheme}
Let us prove the mass-preserving property, the positivity-preserving property of the density, discrete entropy inequality and the well-balancing property of the scheme in presence of a non-conservative term, as done in \cite{bouchut2004} in absence of junction conditions.
\begin{prop}
    Let us consider the numerical scheme \eqref{Relaxation_Final} on a 2-canal network, with junction condition given by \eqref{junction_mass_cons_shallow}-\eqref{junction_pressure_shallow} and boundary conditions \eqref{boundary_ingoing_discrete}-\eqref{boundary_outgoing_discrete}. The scheme is mass-preserving, namely
    \begin{equation}
        \Delta x \sum_{i=1}^{N_\ell} h_{i,\ell}^n + \Delta x \sum_{i=1}^{N_r} h_{i,r}^n = \Delta x \sum_{i=1}^{N_\ell} h_{i,\ell}^0 + \Delta x \sum_{i=1}^{N_r} h_{i,r}^0.
    \end{equation}
    Moreover, if the kinetic velocities are chosen such that
    \begin{equation}
        \lambda_j^n \geq \max_{1 \leq i \leq N_j} |U_{i,j}^n| + \sqrt{p'} \qquad j \in \{ \ell, r\},
    \end{equation}
    and the initial height (or section $a_i$) is positive, then the solution remains positive in time, namely
    \begin{equation}
        h^0_{i,j} \geq 0 \quad \forall 1 \leq i \leq N_j \quad \implies \quad h^n_{i,j} \geq 0 \quad \forall \, 1 \leq i \leq N_j, \quad \forall \, n \in \N,  \qquad j \in \{ \ell, r\}.
    \end{equation}
\end{prop}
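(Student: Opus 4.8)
\textit{Proof plan.}
The plan is to run the argument on the macroscopic conservative form of the scheme,
$U_i^{n+1}=U_i^n-\tfrac{\Delta t}{\Delta x}\big(\mathcal F^n_{i+1/2}-\mathcal F^n_{i-1/2}\big)$ on each canal, and to use that for the subcritical choice $\lambda_1=-\lambda_j^n<0<\lambda_j^n=\lambda_2$ on canal $j$ the numerical flux is the kinetic flux $\mathcal F_{i+1/2}=\lambda_1\mathcal M_1(U_{i+1})+\lambda_2\mathcal M_2(U_i)$, whose first components are $\mathcal M_1^{(1)}(U)=\tfrac12 h(1-v/\lambda_j^n)$, $\mathcal M_2^{(1)}(U)=\tfrac12 h(1+v/\lambda_j^n)$, and which satisfies the compatibility relations \eqref{Compatibility_Conditions}. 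I will also use the usual CFL restriction $\Delta t\max_j\lambda_j^n\le\Delta x$, which is implicit in the scheme, and the fact that the hypothesis on $\lambda_j^n$ dominates the local characteristic speeds, so in particular $|v_{i,j}^n|\le\lambda_j^n$.

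\emph{Mass conservation.} First I would sum the first (height) component of the update over $i=1,\dots,N_\ell$ and $i=1,\dots,N_r$ and telescope; only four interface fluxes survive, the two external ones and the two at the junction. At the external cells I insert the discrete no-flux data \eqref{boundary_ingoing_discrete}--\eqref{boundary_outgoing_discrete} ($h$ mirrored, $q$ anti-mirrored) into $\mathcal F^{(1)}$; because the two velocities are symmetric, the $h$-parts and the $q$-parts cancel separately, so the external mass fluxes vanish. At the junction, the two kinetic coupling relations are precisely $\mathcal M_2^{(1)}(u_\ell^*)=\mathcal M_2^{(1)}(u_\ell^-)$ and $\mathcal M_1^{(1)}(u_r^*)=\mathcal M_1^{(1)}(u_r^+)$, i.e. $\Phi_\ell=0$, $\Phi_r=0$ in \eqref{Phi_functions}; hence the left junction flux collapses, via \eqref{Compatibility_Conditions}$_2$, to $\lambda_1\mathcal M_1^{(1)}(u_\ell^*)+\lambda_2\mathcal M_2^{(1)}(u_\ell^*)=F^{(1)}(u_\ell^*)=q_\ell^*$, and the right one to $q_r^*$. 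The mass-conservation condition \eqref{junction_mass_cons_discrete} gives $q_\ell^*=q_r^*$, so these two cancel as well. Multiplying by $\Delta x$ and iterating in $n$ yields the stated identity (with the obvious $\alpha_j$-weighting when the cross-sections differ, using \eqref{junction_mass_cons_shallow}).

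\emph{Positivity.} I would argue by induction on $n$ at the kinetic level. Assume $h_{i,j}^n\ge0$ for all cells; then $|v_{i,j}^n|\le\lambda_j^n$ gives $\mathcal M_1^{(1)}(U_{i,j}^n)\ge0$ and $\mathcal M_2^{(1)}(U_{i,j}^n)\ge0$. Under the CFL condition the homogeneous step $H_\Delta$ (the exact transport solver \eqref{exact_Riemann_solver}) acts on each component of $f_1$ and $f_2$ as an upwind scheme, i.e. as a convex combination of neighbouring values, so it preserves nonnegativity of these first components; since $h_{i,j}^{n+1}$ is the sum of the first components of the transported $f_1$ and $f_2$ in cell $i$, it is nonnegative, and the new Maxwellians $\mathcal M_j(U^{n+1})$ feed the next step. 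At the external cells the convex-combination argument is legitimate because \eqref{boundary_ingoing_discrete}--\eqref{boundary_outgoing_discrete} produce a ghost height equal to an interior (nonnegative) one and a ghost velocity of equal magnitude. At the junction, the only ghost data entering the updates of $\mathcal C_{N_\ell,\ell}$ and $\mathcal C_{1,r}$ are $\mathcal M_1^{(1)}(u_\ell^*)$ and $\mathcal M_2^{(1)}(u_r^*)$; solving the (linear, for $\lambda_\ell^n=\lambda_r^n=\lambda$) junction system \eqref{junction_mass_cons_discrete}, \eqref{junction_pressure_discrete}, $\Phi_\ell=0$, $\Phi_r=0$ gives $u_\ell^*=u_r^*=:u^*$ with $h^*=\mathcal M_1^{(1)}(u_r^+)+\mathcal M_2^{(1)}(u_\ell^-)\ge0$ and $|v^*|\le\lambda$, hence $\mathcal M_1^{(1)}(u^*)\ge0$ and $\mathcal M_2^{(1)}(u^*)\ge0$; so the junction ghost states are themselves admissible and the induction closes.

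The step I expect to be the main obstacle is exactly this last one: verifying that the junction solver always returns a state with nonnegative height (section) and subcharacteristic velocity, which is where the specific form of the coupling and kinetic conditions is essential; for the other admissible choice of kinetic conditions (using $\mathcal M^{(2)}$) the junction system is nonlinear and the analogous a priori sign estimate has to be established by hand. Everything transfers verbatim to the blood-flow system \eqref{System_Arterial_SaintVenant} with $h$ replaced by $a$ and $\tfrac12 g h^2$ by $\tfrac{\beta}{3}a^{3/2}/\sqrt{a_0}$, and in the presence of the non-conservative term positivity is preserved because the hydrostatic reconstructions \eqref{condition_positivity_wellbalanced_shallow1}--\eqref{condition_positivity_wellbalanced_shallow2}, resp. \eqref{condition_positivity_wellbalanced_arterial}, keep the reconstructed heights, resp. sections, nonnegative.
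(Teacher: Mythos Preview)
Your mass-conservation argument is exactly the paper's (telescoping plus boundary/junction cancellation), only spelled out in full detail where the paper says ``straightforward''.

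For positivity the two proofs diverge. You work at the kinetic level: under the subcharacteristic condition the first components of both Maxwellians are nonnegative, upwind transport under CFL is a convex combination, and you close the induction by solving the junction system explicitly to check that $h^*=\mathcal M_1^{(1)}(u_r^+)+\mathcal M_2^{(1)}(u_\ell^-)\ge0$ and $|v^*|\le\lambda$. The paper instead stays at the macroscopic level and invokes the hydrostatic reconstruction: the truncation $h_{i+1/2^\pm}=\max(0,\cdot)$ in \eqref{condition_positivity_wellbalanced_shallow1}--\eqref{condition_positivity_wellbalanced_shallow2} (resp.\ \eqref{condition_positivity_wellbalanced_arterial}) forces the first component of $\mathcal F_{i+1/2}^n-\mathcal F_{i-1/2}^n$ to be $\le0$ whenever $h_i=0$, and the pressure-continuity junction condition \eqref{junction_pressure_shallow} makes the height continuous across the interface so that the same interior argument carries over there. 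Your route is more self-contained---it does not require the well-balanced reconstruction to be switched on and treats the homogeneous scheme directly---and your explicit check that the junction solver returns an admissible ghost state is an ingredient the paper's proof leaves implicit; the paper's route is shorter once one grants the standard Audusse--Bouchut reconstruction lemma, and it covers the non-flat-bottom case in one stroke rather than via the addendum you attach at the end.
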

\begin{proof}
    It is straightforward to observe that the mass is preserved at the discrete level, due to the boundary conditions \eqref{boundary_ingoing_discrete}-\eqref{boundary_outgoing_discrete} and the junction condition \eqref{junction_mass_cons_discrete}. Moreover, condition \eqref{junction_pressure_shallow}  guarantees the continuity of the height (or the vessel section) at the interface, which implies that if $h_i^0 \geq 0$ for all $i$, then the solution remains positive in time. Indeed, the truncation \eqref{condition_positivity_wellbalanced_shallow1}-\eqref{condition_positivity_wellbalanced_shallow2} (or equivalently \eqref{condition_positivity_wellbalanced_arterial} for the arterial network), implies that the difference $\mathcal{F}_{i+1/2}^n-\mathcal{F}_{i-1/2}^n \leq 0$, whenever $h_i=0$ (or $a_i=0$).
\end{proof}

Let us now focus on the entropy dissipation properties of the numerical scheme. Differently from \cite{BNR2025}, junctions conditions automatically ensure entropy dissipation without further assumptions.
\begin{prop}
    Let us consider the numerical scheme \eqref{Relaxation_Final} on a 2-canal network for the shallow water system \eqref{shallow_system} without source term, with junction condition given by \eqref{junction_mass_cons_shallow}-\eqref{junction_pressure_shallow} and boundary conditions \eqref{boundary_ingoing_discrete}-\eqref{boundary_outgoing_discrete}. There exists a numerical entropy flux function $\mathcal{G}$, which satisfies the following inequality related to the discrete entropy $\eta$
    \begin{equation}
        \sum_{j \in \{ \ell, r\}} \sum_{i=1}^{N_j} \left[\eta ( U^{n+1}_i) - \eta ( U^n_i) + \frac{\Delta t}{\Delta x} \left( \mathcal{G}(U^n_i, U^n_{i+1}) - \mathcal{G}(U^n_{i-1}, U^n_i)\right) \right] \leq 0.
    \end{equation}
    \begin{proof}
        The critical point is the junction, since under CFL condition, on each arc it holds
        \begin{equation}
            \eta ( U^{n+1}_i) - \eta ( U^n_j) + \frac{\Delta t}{\Delta x} \left( \mathcal{G}(U^n_j, U^n_{i+1}) - \mathcal{G}(U^n_{i-1}, U^n_i)\right) \leq 0 \qquad \forall \, i,
        \end{equation}
        as shown in \cite{audusse2004}, where $\mathcal{G}$ is the discrete counterpart of the entropy flux defined in \eqref{entropy_pair}. By summing over all the cells and over all the arcs, the inequality on the whole domain becomes
        \begin{multline}
        \left(\sum_{i=1}^{N_\ell} \eta ( U^{n+1}_{i,\ell}) + \sum_{i=1}^{N_r} \eta ( U^{n+1}_{i,r}) \right) - \left(\sum_{i=1}^{N_\ell} \eta ( U^n_{i,\ell}) + \sum_{i=1}^{N_r} \eta ( U^n_{i,r}) \right) \\ +\frac{\Delta t}{\Delta x} \left( -\mathcal{G}^n_{1/2,\ell} + \mathcal{G}^n_{N_\ell+ 1/2,\ell} -\mathcal{G}^n_{1/2,r} +  \mathcal{G}^n_{N_r+1/2,r} \right) \leq 0.
    \end{multline}
    Indeed, from boundary conditions \eqref{boundary_ingoing_discrete}-\eqref{boundary_outgoing_discrete}, for the shallow water equations, we have
    \begin{equation}
        \mathcal{G}^n_{1/2,\ell} + \mathcal{G}^n_{N_r+ 1/2,r} = 0,
    \end{equation}
    and from junction conditions we have also
    \begin{equation}
        \mathcal{G}^n_{N_\ell+ 1/2,\ell} + \mathcal{G}^n_{1/2,r} = 0.
    \end{equation}
    \end{proof}
\end{prop}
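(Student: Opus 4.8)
The plan is to derive the global inequality from a purely local, cell-by-cell inequality on each arc, and then to verify that the only terms surviving the telescoping sum --- the entropy fluxes at the two external endpoints and at the junction --- all vanish or cancel. First I would recall that on each canal, away from the junction, the scheme \eqref{Relaxation_Final} is exactly the two-speed relaxation (HLL-type) scheme whose macroscopic flux is \eqref{flux_HHL_bouchut}. With the choice $\lambda_1 = -\lambda_2 = \lambda$ and $\lambda$ no smaller than $\max(|v|+\sqrt{gh})$ over the grid, the Maxwellians $\mathcal{M}_1, \mathcal{M}_2$ are entropy-compatible in the sense of Bouchut \cite{bouchut2004}: there exist kinetic entropies $H_1, H_2$ with $H_1(\mathcal{M}_1(u)) + H_2(\mathcal{M}_2(u)) = \eta(u)$ for the pair \eqref{entropy_pair}, the relaxation (projection) half-step is entropy non-increasing, and the linear transport half-step \eqref{Homogeneous_Kinetic} is entropy-conservative under the CFL condition $\lambda\,\Delta t \le \Delta x$. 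Composing the two half-steps, as in \cite{bouchut2004, audusse2004}, produces a consistent numerical entropy flux $\mathcal{G}$, the discrete analogue of $G$ in \eqref{entropy_pair}, such that every interior cell $i$ on each canal $j \in \{\ell,r\}$ satisfies $\eta(U_i^{n+1}) - \eta(U_i^n) + \tfrac{\Delta t}{\Delta x}\bigl(\mathcal{G}(U_i^n,U_{i+1}^n) - \mathcal{G}(U_{i-1}^n,U_i^n)\bigr) \le 0$.

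Next I would sum this inequality over $1 \le i \le N_j$ and over $j \in \{\ell,r\}$. The fluxes at interior interfaces cancel pairwise, leaving only $\mathcal{G}^n_{1/2,\ell}$, $\mathcal{G}^n_{N_\ell + 1/2,\ell}$, $\mathcal{G}^n_{1/2,r}$ and $\mathcal{G}^n_{N_r + 1/2,r}$. At the two external endpoints the ghost-cell data \eqref{boundary_ingoing_discrete}-\eqref{boundary_outgoing_discrete} reproduce a reflecting wall with zero normal velocity; since $G(U)$ carries an overall factor $v$, the numerical entropy flux through such a wall is zero, so $\mathcal{G}^n_{1/2,\ell} = \mathcal{G}^n_{N_r + 1/2,r} = 0$ --- the discrete counterpart of $G(U_r(s,L_r)) = G(U_\ell(s,0)) = 0$ from the continuous analysis.

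The remaining point --- and the only genuinely delicate one --- is to show that the junction fluxes $\mathcal{G}^n_{N_\ell + 1/2,\ell}$ and $\mathcal{G}^n_{1/2,r}$ cancel in the telescoped sum, i.e. that whatever entropy leaves the incoming canal through the junction enters the outgoing one. Here I would use the junction ghost states $(h_\ell^*,q_\ell^*)$ and $(h_r^*,q_r^*)$ constructed in Section \ref{Treatment_Junction}, which by construction satisfy mass conservation $q_\ell^* = q_r^*$ and pressure continuity $h_\ell^* = h_r^*$. Writing $\mathcal{G}$ at the junction interface through the one-sided kinetic fluxes fixed by the preserved components $f_{2,\ell}^* = f_{2,\ell}^-$ and $f_{1,r}^* = f_{1,r}^+$ and inserting these two equalities, the two expressions are seen to match, exactly paralleling the continuous identity $G(U_\ell(s,L_\ell)) - G(U_r(s,0)) = 0$ proved earlier. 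I expect the main obstacle to be precisely this bookkeeping: one must first give an unambiguous definition of the numerical entropy flux across an interface whose two adjacent cells belong to different canals, possibly with different discrete speeds $\lambda_\ell \neq \lambda_r$, and check that it is consistent with the kinetic Riemann solver used at the junction; once that is set up, the cancellation follows from the junction conditions alone --- in contrast to \cite{BNR2025}, no further assumption is needed. Collecting the three points yields the stated global entropy inequality.
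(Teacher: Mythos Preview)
Your proposal is correct and follows essentially the same route as the paper: invoke the cell-wise discrete entropy inequality on each arc (the paper simply cites \cite{audusse2004} for this, while you spell out the Bouchut kinetic-entropy mechanism), telescope, and then kill the four surviving interface fluxes using the reflecting-wall conditions at the external endpoints and the equalities $h_\ell^*=h_r^*$, $q_\ell^*=q_r^*$ at the junction. If anything you are more careful than the paper, which merely asserts the junction cancellation ``from junction conditions'' without writing out the argument; your remark about the bookkeeping when $\lambda_\ell\neq\lambda_r$ is well taken and is not addressed in the paper's proof.
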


Extensions of discrete entropy dissipation in presence of source terms can be derived in the same spirit of \cite{bouchut2004, audusse2004}, but it goes beyond the scope of this work.
Let us finally underline the well-balanced property of our scheme, as stated.
\begin{prop}
    Let us consider the numerical scheme \eqref{Relaxation_Final} on a 2-canal network, with junction conditions given by \eqref{junction_mass_cons_shallow}-\eqref{junction_pressure_shallow} and boundary conditions \eqref{boundary_ingoing_discrete}-\eqref{boundary_outgoing_discrete}. Then, the scheme is well-balanced, namely preserves the steady state of a lake at rest.
\end{prop}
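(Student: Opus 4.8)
The plan is to fix a time level $t^n$ at which the discrete solution is a \emph{global} lake at rest — that is, $v_{i,j}^n=0$ in every cell of every canal and $h_{i,j}^n+z_{i,j}\equiv H$ for one constant $H$ across the whole network, which is consistent precisely because $z$ is continuous at the junction node — and to show that the update \eqref{Relaxation_WellBalanced} together with the source contribution \eqref{Source_Term_WB} returns $U_{i,j}^{n+1}=U_{i,j}^n$. Since the mass component of the flux is the discharge and the discharge is everywhere zero at such a state, it suffices to prove that in each cell the mass component of $\mathcal{F}_{i+1/2}-\mathcal{F}_{i-1/2}$ vanishes and that its momentum component is exactly cancelled by $S_i^n$.

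\emph{Interior and external-boundary interfaces.} Here I would simply transcribe the classical argument of \cite{audusse2004, bouchut2004}. From the hydrostatic reconstruction \eqref{condition_positivity_wellbalanced_shallow1}-\eqref{condition_positivity_wellbalanced_shallow2} and $h_i^n+z_i=h_{i+1}^n+z_{i+1}=H$ one gets $h_{i+1/2^-}=h_{i+1/2^+}=\max(0,H-z_{i+1/2})$, so the two reconstructed states on either side of an interior interface coincide and carry zero momentum. For two coinciding states with zero discharge, the numerical flux \eqref{flux_HHL_bouchut} reduces in each of its branches to $F$ evaluated at that state, i.e. to the vector with zero mass component and momentum component $\frac{g}{2}h_{i+1/2^-}^2$ (dry cells staying dry, since there all reconstructed quantities, and hence the flux and the source, vanish). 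Telescoping, the momentum update of cell $i$ equals $\frac{g}{2}h_{i+1/2^-}^2-\frac{g}{2}h_{i-1/2^+}^2$, which is exactly what $S_i^n$ in \eqref{Source_Term_WB} is built to cancel, while the mass update is zero; hence $U_i^{n+1}=U_i^n$ in every cell not adjacent to the junction. The external boundary interfaces are handled identically, since the reflective conditions \eqref{boundary_ingoing_discrete}-\eqref{boundary_outgoing_discrete} produce ghost states with vanishing discharge and the same height as the adjacent physical cell.

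\emph{The junction.} This is the only genuinely new ingredient. At the junction the ghost states $(h_\ell^*,q_\ell^*)$, $(h_r^*,q_r^*)$ are selected by discrete mass conservation $q_\ell^*=q_r^*$ \eqref{junction_mass_cons_discrete}, pressure continuity $h_\ell^*=h_r^*$ \eqref{junction_pressure_discrete}, and the two supplementary kinetic identities $\Phi_\ell=0$, $\Phi_r=0$ with $\Phi_\ell,\Phi_r$ from \eqref{Phi_functions}. Evaluating these at the lake-at-rest traces gives $q_\ell^-=q_r^+=0$ and, by continuity of $z$ and the global constant $H$, $h_\ell^-=h_r^+=H-z_J$ where $z_J$ is the bottom height at the node; then $\Phi_\ell=\Phi_r=0$ read $h_\ell^*+q_\ell^*/\lambda_\ell=H-z_J$ and $h_r^*+q_r^*/\lambda_r=H-z_J$. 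Subtracting and using $h_\ell^*=h_r^*$, $q_\ell^*=q_r^*$ yields $q_\ell^*\,(1/\lambda_\ell-1/\lambda_r)=0$, hence $q_\ell^*=q_r^*=0$ and $h_\ell^*=h_r^*=H-z_J$: the ghost cells again carry exactly the lake-at-rest data. Applying the well-balanced reconstruction at the junction interface with $z_{N_\ell+1/2,\ell}=z_{1/2,r}=z_J$, the reconstructed heights on the two sides coincide, so that interface behaves exactly like an interior one and cells $N_\ell$ of the left canal and $1$ of the right canal are left unchanged as well. Summing over all arcs and cells gives $U^{n+1}=U^n$.

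\emph{Main obstacle and the blood-flow case.} The only step that is not a copy of the single-channel theory is the resolution of the junction system, i.e. checking that imposing $\Phi_\ell=\Phi_r=0$ together with discrete mass and pressure continuity does return the hydrostatic ghost state; this is the short computation above and hinges on $\lambda_\ell\neq\lambda_r$ (in the degenerate case $\lambda_\ell=\lambda_r$ one instead uses the momentum-based form of the supplementary conditions, or observes that the scheme still selects $q^*=0$). For the blood flow system the argument is verbatim, with the \emph{man at eternal rest} equilibrium $\sqrt{a}-\sqrt{a_0}\equiv\text{cst}$ in place of $h+z\equiv H$ and the reconstruction \eqref{condition_positivity_wellbalanced_arterial} with source term \eqref{Source_Term_WB_Arterial} replacing \eqref{condition_positivity_wellbalanced_shallow1}-\eqref{Source_Term_WB}.
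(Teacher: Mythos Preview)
Your argument is correct and follows the paper's strategy---reconstructed states coincide at every interface, consistency of $\mathcal{F}$ reduces the numerical flux to $F$, and the source \eqref{Source_Term_WB} cancels the resulting momentum update---but you are more thorough at the junction, where the paper merely invokes \eqref{junction_mass_cons_discrete}--\eqref{junction_pressure_discrete} while you explicitly solve the full four-equation ghost-state system and recover the hydrostatic data. One remark: the need for $\lambda_\ell\neq\lambda_r$ is an artifact of the sign in $\Phi_r$ as written in \eqref{Phi_functions}; with the intended kinetic condition $\mathcal{M}_1(u_r^*)=\mathcal{M}_1(u_r^+)$, i.e.\ $h_r^*-q_r^*/\lambda_r=h_r^+-q_r^+/\lambda_r$, the subtraction yields $q^*(1/\lambda_\ell+1/\lambda_r)=0$ and hence $q^*=0$ unconditionally, so your caveat disappears.
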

\begin{proof}
    To prove the well balanced property, let us consider the steady state of a lake at rest, with $h_{i+\frac{1}{2}^-} = h_{i+\frac{1}{2}^+}$ and $v_j = v_{j+1} = 0$, from which we have $U_{i+\frac{1}{2}^-} = U_{i+\frac{1}{2}^+}$.\\
    By consistency of $\mathcal{F}$ we have $F_{i+\frac{1}{2}} = F(U_{i+\frac{1}{2}^+}) = F(U_{i+\frac{1}{2}^-})$ which, together with the expression of the source terms \eqref{Source_Term_WB} gives the well balanced property in the internal cells. Conditions \eqref{junction_mass_cons_discrete}-\eqref{junction_pressure_discrete} guarantee $F_{i+\frac{1}{2}} = F \left(U_{i+\frac{1}{2}^+} \right) = F \left(U_{i+\frac{1}{2}^-} \right)$ also at the junction, concluding the proof.
\end{proof}

\section{The Junction Riemann Problem}
\label{Section:JRP}
Let us consider a Riemann Problem consisting in a Cauchy problem with constant initial data on each canal. Given a junction with $n$ incoming canals and $m$ outgoing canals, where the conserved quantities on the incoming ones are indicated by
\begin{equation}
    (x,t) \in \mathcal{I}_i \times \R^+ \mapsto u_i(x,t) \in \R \times \R, \qquad i \in \left\{1, \dots, n \right\}
\end{equation}
whereas those on the outgoing ones
\begin{equation}
    (x,t) \in \mathcal{I}_j \times \R^+ \mapsto u_j(x,t) \in \R \times \R, \qquad j \in \left\{1, \dots, m \right\}.
\end{equation}
Given constant initial conditions $(u^0_i, u^0_j)$, the Riemann solution consists of intermediate states $(u^*_i, u^*_j)$ satisfying some additional assumptions at the junction. In \cite{brianipiccoli2016} the analysis is restricted to \textit{subcritical states}, in order to provide a good definition of the solution on the network directly from the original system \eqref{shallow_system}. The main goal here is to extend the analysis to the \textit{torrential state} (or \textit{supercritical state}), by using the kinetic formulation of the system \eqref{Relaxation_System}.\\
The solution is determined such that for each $i$ and $j$ the intermediate states $(u^*_i, u^*_j)$ are connected to the initial condition through physically correct waves and in particular we have
\begin{equation}
    \Phi_\ell(h_i^*, q_i^*) = 0, \qquad \Phi_r(h_j^*, q_j^*) = 0,
\end{equation}
where $\Phi_\ell$ and $\Phi_r$ are defined in \eqref{Phi_functions}. This condition is essential to have admissible solutions to Riemann problems, since we need that $(u^0_i, u^*_i)$ is solved by $l$-waves (namely, with \textit{negative speed}) and $(u^0_j, u^*_j)$ is solve by $r$-waves (namely, with \textit{positive speed}).\\

\subsection{Bottleneck}
The simplest application of models for flow on canals is given by the bottleneck, in which we assume to have two canals intersecting at one single point. The incoming canal is parametrized by $(-\infty, 0)$ and the outgoing by $(0, +\infty)$, while the separation point (i.e. the junction) is taken in $x=0$.\\
We consider two different flux function along the canal, assuming the conservation of mass and equal heights at the junction. The homogeneous system could be explicitly rewritten as
\begin{equation}
    \begin{aligned}
    \partial_t u_1 + \partial_x F_1(u_1) = 0, \qquad &\text{for } x <0,\\ 
    \partial_t u_2 + \partial_x F_2(u_2) = 0, \qquad &\text{for } x >0,
    \end{aligned}
\end{equation}
where $F_1(u)$ and $F_2(u)$ are defined as
\begin{equation}
    F_i(u) = \begin{pmatrix}
    h v\\[5pt] \alpha_i \, h\,v^2 + \displaystyle\frac{1}{2} g\,h^2
    \end{pmatrix}.
\end{equation}
The system is complemented with junction conditions, in order to have 
\begin{equation}
\begin{aligned}
    h_1(0^-,t) = h_2(0^+,t),\\
    q_1(0^-,t) = q_2(0^+,t).
\end{aligned}
\end{equation}
At each time $t$ the solution at the interface $x=0$ is then computed by solving the nonlinear system
\begin{equation}
\label{eq::bottleneck_continuity_pressure}
    \begin{cases}
        q_1^* = q_2^*,\\
        h_1^*=h_2^*,\\
        \Phi_\ell(h_1^*, q_1^*) = 0,\\
        \Phi_r(h_2^*, q_2^*) = 0,
    \end{cases}
\end{equation}
with $h_i^*>0$. For the case of simple junctions, the interesting setting is two canals having different sections $\alpha_i$, for which the first condition reads $\alpha_1 \, h^*_1\,v^*_1 = \alpha_2 \, h_2^*\,v_2^*$.\\
This is not the only possible set of conditions to find a physically correct solution. An analogous system could be set up by assuming the energy continuity at the junction, instead of the height equality. Thus, the nonlinear system becomes
\begin{equation}
\label{eq::bottleneck_continuity_energy}
    \begin{cases}
        q_1^* = q_2^*,\\
        g\,h_1^* + \displaystyle \frac{1}{2} (v_1^*)^2=g\,h_2^*  + \displaystyle \frac{1}{2} (v_2^*)^2 ,\\
        \Phi_\ell(h_1^*, q_1^*) = 0,\\
        \Phi_r(h_2^*, q_2^*) = 0.
    \end{cases}
\end{equation}
In the following, for the sake of simplicity, we will assume the continuity of (\textit{water} or \textit{arterial}) pressure \eqref{eq::bottleneck_continuity_pressure}, instead of the energy continuity \eqref{eq::bottleneck_continuity_energy}.

\subsection{One Incoming and Two Outgoing Canals}
In this subsection we consider the particular case of a junction with one incoming and two outgoing canals. We consider an identical flux $F(u)$ on each canal and we assume again that the intersection is located in $x=0$, where the incoming canal is parametrized by $(-\infty, 0)$ and the two outgoing canals by $(0, +\infty)$. The system reads as follows
\begin{equation}
    \begin{aligned}
    \partial_t u_1 + \partial_x F(u_1) = 0, \qquad &\text{for } x <0,\\ 
    \partial_t u_2 + \partial_x F(u_2) = 0, \qquad &\text{for } x >0,\\
    \partial_t u_3 + \partial_x F(u_3) = 0, \qquad &\text{for } x >0
    \end{aligned}
\end{equation}
In order to determine the value of $u^*$ at the junction we impose again the conservation of mass and equal heights in $x=0$, namely
\begin{equation}
\begin{aligned}
    h_1(0^-,t) = h_2(0^+,t)=h_3(0^+,t),\\
    q_1(0^-,t) = q_2(0^+,t)+q_3(0^+,t).
\end{aligned}
\end{equation}
In this setting, at each time $t$, the solution at the interface $x=0$ is then computed by solving the nonlinear system
\begin{equation}
    \begin{cases}
        q_1^* = q_2^*+q_3^*,\\
        p_1^*=p_2^*=p_3^*,\\
       \Phi_\ell(h_1^*, q_1^*)=0,\\
        \Phi_r(h_2^*, q_2^*)=0,\\
        \Phi_r(h_3^*, q_3^*)=0,
    \end{cases}
\end{equation}
with $h_i^*>0$ and the $\Phi_{\{\ell, r\}}$ functions chosen as in \eqref{Phi_functions}.

\subsection{Two Incoming and One Outgoing Canals}
Let us consider a junction with two incoming canals and one outgoing canal. The system of equations for an identical flux $F(u)$ on each canal becomes now
\begin{equation}
    \begin{aligned}
    \partial_t u_1 + \partial_x F(u_1) = 0, \qquad &\text{for } x <0,\\ 
    \partial_t u_2 + \partial_x F(u_2) = 0, \qquad &\text{for } x >0,\\
    \partial_t u_3 + \partial_x F(u_3) = 0, \qquad &\text{for } x >0
    \end{aligned}
\end{equation}
The system is complemented with the assumptions on the conservation of mass and continuity of the heights at the junction, obtaining a solution to the Riemann problem consisting of three separated waves and satisfying the following nonlinear system
\begin{equation}
    \begin{cases}
        q_1^* + q_2^*=q_3^*,\\
        p_1^*=p_2^*=p_3^*,\\
        \Phi_\ell(h_1^*, q_1^*)=0,\\
        \Phi_\ell(h_2^*, q_2^*)=0,\\
        \Phi_r(h_3^*, q_3^*)=0,
    \end{cases}
\end{equation}
with $h_i^*>0$ and the $\Phi_{\{\ell, r\}}$ functions chosen as in \eqref{Phi_functions}.

\section{Numerical Simulations}
\label{Section:Numerical_Simulations}
In this section we present some numerical simulations performed with the discrete kinetic scheme previously introduced. We first examine the accuracy of the proposed scheme for a smooth solution, considering a virtual junction in the center of the domain.
All the numerical tests are performed running the scheme detailed in Section \ref{Section:Discrete_Kinetic} with $\Delta x$ precised for each test and a CFL condition of the form $\Delta t \leq 0.8 \Delta x / \lambda_{\max}$, where $\lambda_{\max}$ is the maximum absolute value of the eigenvalues of the problem. The domain, the final time $T$ and the initial conditions will be specified for each test.

\subsection{The Shallow Water Equations}

\subsubsection{Accuracy Test}
The first test is a sanity check for the numerical scheme, based on a Riemann problem for $x \in (-4, 4)$ with a virtual junction in $x=0$. More precisely, we assume the following initial condition
\begin{equation}
\label{Initial_Condition_Test1}
    \begin{aligned}
        \text{Canal $\ell$: } &h_\ell(x,0)=1, \quad &q_\ell(x,0)= 0.1,\\
        \text{Canal $r$: } &h_r(x,0)=0.5, \quad &q_r(x,0)= 0.
    \end{aligned}
\end{equation}
\begin{figure}[ht!]
    \begin{minipage}[h]{0.49\linewidth}
    \begin{center}
    \includegraphics[width=0.95\linewidth]{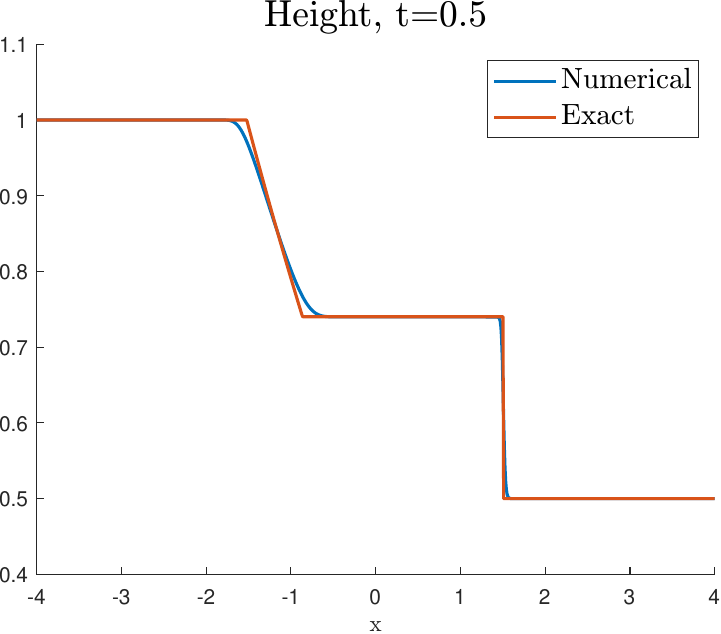} 
    \end{center}
    \end{minipage}
    \hfill
    \begin{minipage}[h]{0.49\linewidth}
    \begin{center}
    \includegraphics[width=0.95\linewidth]{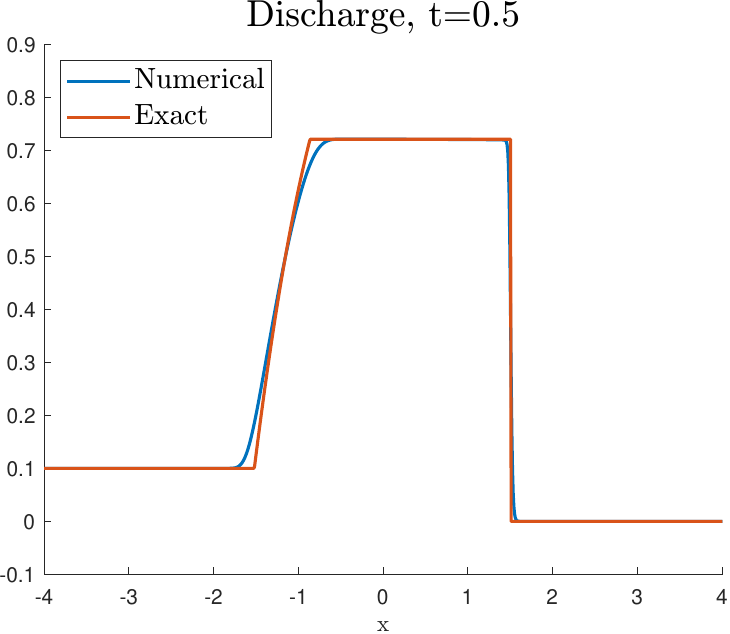} 
    \end{center} 
    \end{minipage}
    \caption{(\textit{Sanity Check}) Riemann problem \eqref{Initial_Condition_Test1} in $x \in (-4,4)$ with a virtual junction in $x=0$.}
    \label{Test_Bottleneck_Sanity}
\end{figure}
In Fig. \ref{Test_Bottleneck_Sanity} we show the numerical simulation at time $t=0.5$, compared with the exact solution analytically computed. We also test the order of convergence of the scheme, as shown in Table \ref{tab:order_accuracy}. The accuracy error is measured by the $L^1$-norm of the error between the numerical solution and the exact solution at the final time, namely
\begin{equation}
    \text{err}_{w, \ell} = \Delta x \displaystyle \sum_{j=1}^{N_\ell} \left|w^N_{j,\ell} - w^{\text{exact}}_{j,\ell} \right|,
\end{equation}
for the arc $\ell$, which could be analogously extended for the arc $r$. 

\begin{table}[ht!]
    \centering
    \begin{tabular}{|c|c c|c c|c c|c c|}
        \hline
        & \multicolumn{2}{c|}{$\mathbf{h_\ell}$} & \multicolumn{2}{c|}{$\mathbf{h_r}$} & 
        \multicolumn{2}{c|}{$\mathbf{q_\ell}$} & \multicolumn{2}{c|}{$\mathbf{q_r}$} \\
        \hline
         $\mathbf{\Delta x}$ & \textbf{err}$_{h,\ell}$ & \textbf{order} & \textbf{err}$_{h,\ell}$ & \textbf{order} & \textbf{err}$_{h,\ell}$ & \textbf{order} & \textbf{err}$_{h,\ell}$ & \textbf{order}\\
        \hline
        0.0625 & 0.0756 & // & 0.0534 & // & 0.1901 & // & 0.1696 & // \\
        0.0312 & 0.0488 & 0.6309 & 0.0286 & 0.9042 & 0.1217 & 0.6427 & 0.0904 & 0.9074 \\
        0.0156 & 0.0303 & 0.6860 & 0.0143 & 1.0007 & 0.0750 & 0.6983 & 0.0450 & 1.0083 \\
        0.0078 & 0.0181 & 0.7465 & 0.0068 & 1.0744 & 0.0444 & 0.7566 & 0.0210 & 1.0988 \\
        0.0039 & 0.0102 & 0.8327 & 0.0029 & 1.2201 & 0.0240 & 0.8447 & 0.0089 & 1.2444 \\
        0.002 & 0.0051 & 0.9827 & 0.00098 & 1.5594 & 0.0123 & 1.0034 & 0.0029 & 1.6145 \\
        \hline
    \end{tabular}
    \caption{$L^1$ error and order of convergence of the discrete kinetic scheme on the left arc (denoted by $\ell$) and the right arc (denoted by $r$) at final time $T=1$.}
    \label{tab:order_accuracy}
\end{table}
\subsubsection{Bottleneck: fluvial regime}
In this subsection we assume to have only two canals with different cross sections, connected at one point in the center of the spatial domain. In all tests we assume constant in time width along each canal.\\
We consider the same initial condition in \eqref{Initial_Condition_Test1}, with two different cross sections, namely setting $\alpha_\ell=1$ and $\alpha_r=0.5$, where $\alpha_i$ is the width of the $i$-th canal. In Fig. \ref{Test2_Bottleneck} we show again that the discrete kinetic approximation is able to capture the behavior of the solution at the junction. We compare the dynamics obtained by the scheme with two different junction conditions: the relaxation approach, based on the exact solution of the linear Riemann problem at the junction (see Section \ref{Treatment_Junction}) and the classic approach, based on an approximate Riemann solver for the solution of system (see Appendix \ref{Appendix:Solution_Riemann}).\\
\begin{figure}[ht!]
    \begin{minipage}[h]{0.49\linewidth}
    \begin{center}
    \includegraphics[width=0.95\linewidth]{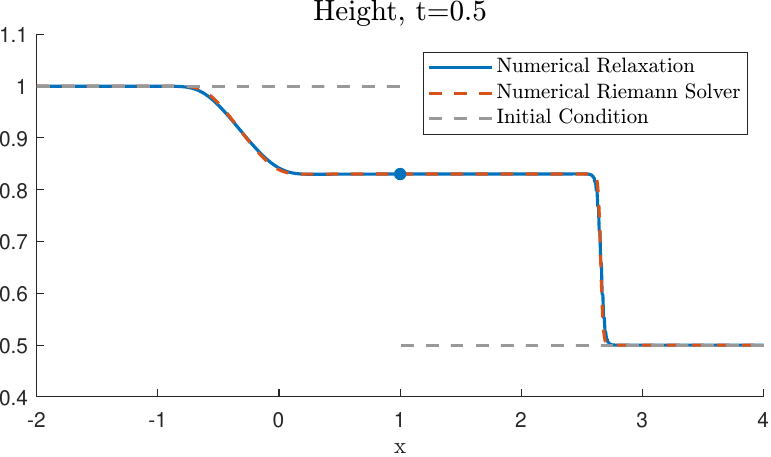} 
    \end{center}
    \end{minipage}
    \hfill
    \begin{minipage}[h]{0.49\linewidth}
    \begin{center}
    \includegraphics[width=0.95\linewidth]{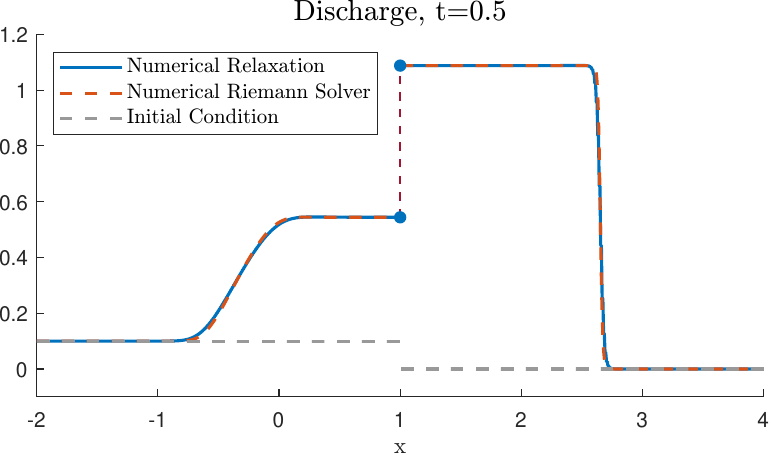} 
    \end{center} 
    \end{minipage}
    \caption{(\textit{Bottleneck}) Riemann problem \eqref{Initial_Condition_Test1} with two different cross sections in $x \in (-2,4)$ with a junction in $x=1$.}
    \label{Test2_Bottleneck}
\end{figure}

Finally, we test the bottleneck for a non-constant initial state in $x \in (-2, 4)$ where the junction is located at $x=1$, with the following initial condition
\begin{equation}
\label{Initial_Condition_Test3}
    \begin{aligned}
        \text{Canal $\ell$: } &h_\ell(x,0)= 1+\exp(-20 x^2), \quad &q_\ell(x,0)= 0.5 h_\ell(x,0),\\
        \text{Canal $r$: } &h_r(x,0)=1, \quad &q_r(x,0)= 0.1.
    \end{aligned}
\end{equation}
We compare again the dynamics obtained by the scheme with the two different junction conditions at final time $t=0.12$, computed with $\Delta x = 0.006$.
\begin{figure}[ht!]
    \begin{minipage}[h]{0.49\linewidth}
    \begin{center}
    \includegraphics[width=0.95\linewidth]{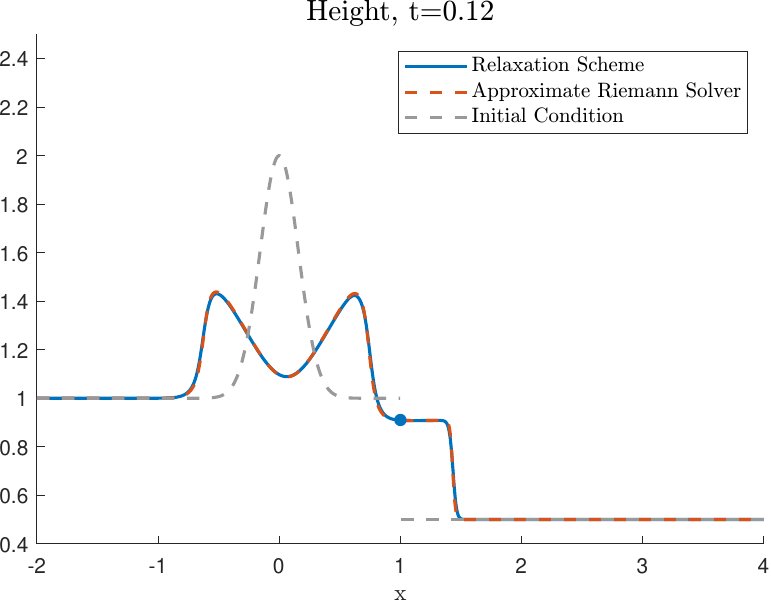} 
    \end{center}
    \end{minipage}
    \hfill
    \begin{minipage}[h]{0.49\linewidth}
    \begin{center}
    \includegraphics[width=0.95\linewidth]{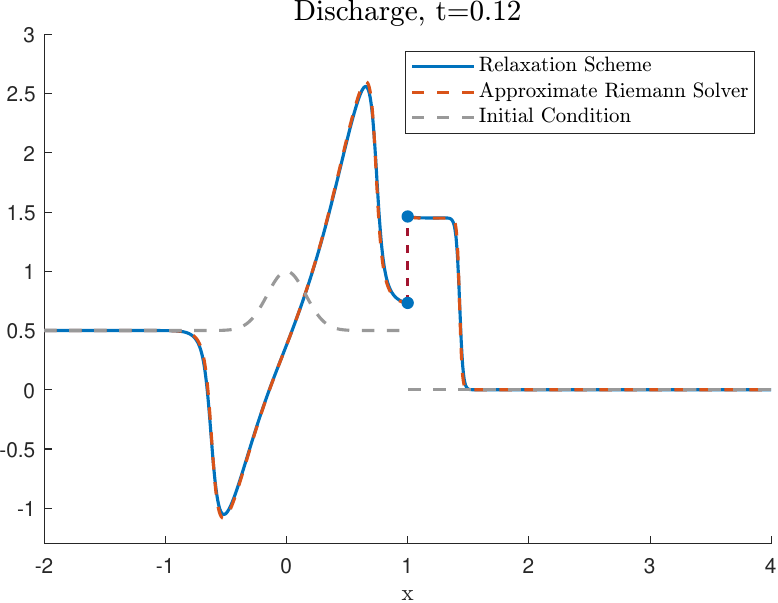} 
    \end{center} 
    \end{minipage}
    \caption{(\textit{Bottleneck}) Riemann problem \eqref{Initial_Condition_Test3} in $x \in (-2,4)$ with a junction in $x=1$.}
    \label{Test3_Bottleneck}
\end{figure}

\subsubsection{Bottleneck: fluvial to torrential regime}
In this subsection we assume again only two canals, but we consider an initial condition which leads to a transition from fluvial to torrential regime. We set $\Delta x = 0.004$ for all the numerical tests. Following \cite{brianipiccoli2018}, let us set the following initial condition
\begin{equation}
\label{SW_Bottleneck_FT}
    \begin{aligned}
        \text{Canal $\ell$: } &h_1\ell(x,0)=0.25, \quad &q_\ell(x,0)= 0.025,\\
        \text{Canal $r$: } &h_r(x,0)=2.5, \quad &q_r(x,0)= 0.25.
    \end{aligned}
\end{equation}

Despite both initial states are subcritical ($Fr_\ell \approx 0.06$ and $Fr_r \approx 0.02$), the intermediate state (or region) can reach a supercritical value ($Fr >1$). Indeed, since the ratio $h_r/h_\ell >> 1$, the rarefaction fan on the left can accelerate the fluid to supercritical velocity \cite{brianipiccoli2018}.  
In Fig. \ref{Test4_Bottleneck} we compare our approach with the strategy proposed in \cite{brianipiccoli2018}. As observed, there is perfect agreement between the two solutions.

\begin{figure}[ht!]
    \begin{minipage}[h]{0.49\linewidth}
    \begin{center}
    \includegraphics[width=0.95\linewidth]{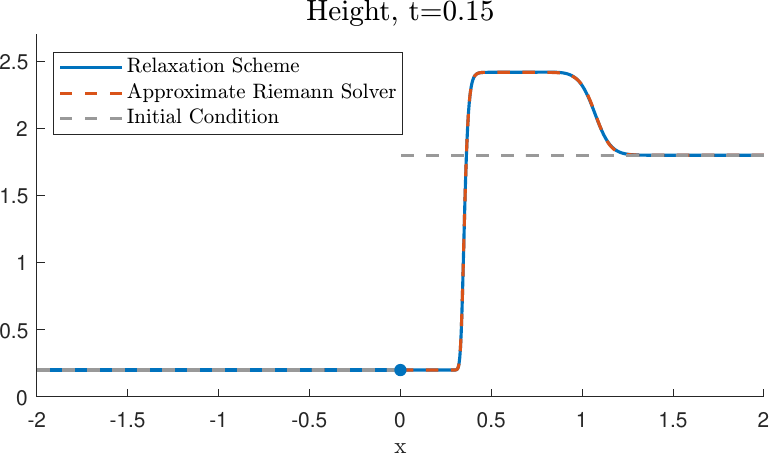} 
    \end{center}
    \end{minipage}
    \hfill
    \begin{minipage}[h]{0.49\linewidth}
    \begin{center}
    \includegraphics[width=\linewidth]{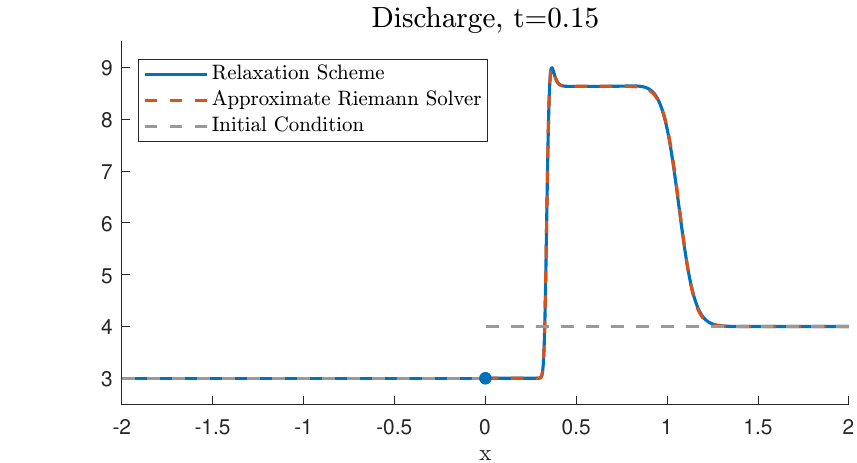} 
    \end{center} 
    \end{minipage}
    \caption{(\textit{Bottleneck}) Riemann problem \eqref{SW_Bottleneck_FT} in $x \in (-2,2)$ with a junction in $x=0$ for a supercritical case.}
    \label{Test4_Bottleneck}
\end{figure}

Then, we consider the following initial condition
\begin{equation}
\label{SW_Bottleneck_FT2}
    \begin{aligned}
        \text{Canal $\ell$: } &h_\ell(x,0)=0.2, \quad &q_\ell(x,0)= 3,\\
        \text{Canal $r$: } &h_r(x,0)=1.8, \quad &q_r(x,0)= 4,
    \end{aligned}
\end{equation}
in which the left state is supercritical ($Fr_\ell >1$) and the right state is subcritical ($Fr_r <1 $).\\
Figure \ref{Test5_Bottleneck} shows again the comparison between two different schemes at final time $T=0.15$. The results are in perfect agreement also for this initial condition, showing the capability of relaxation schemes to deal also supercritical initial conditions.

\begin{figure}[ht!]
    \begin{minipage}[h]{0.49\linewidth}
    \begin{center}
    \includegraphics[width=0.95\linewidth]{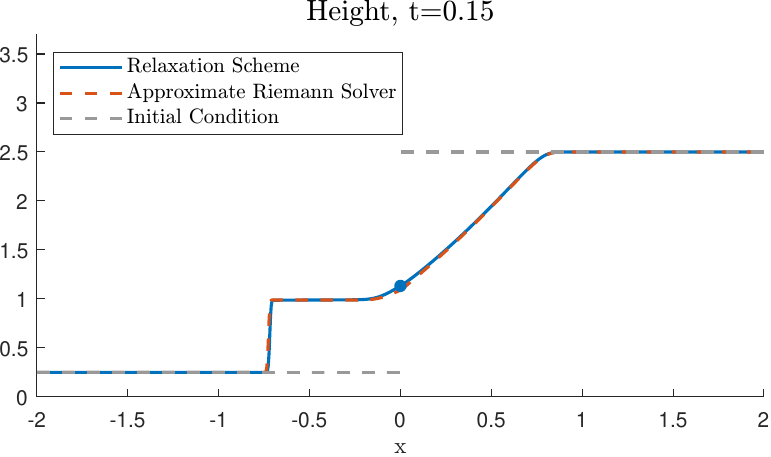} 
    \end{center}
    \end{minipage}
    \hfill
    \begin{minipage}[h]{0.49\linewidth}
    \begin{center}
    \includegraphics[width=0.95\linewidth]{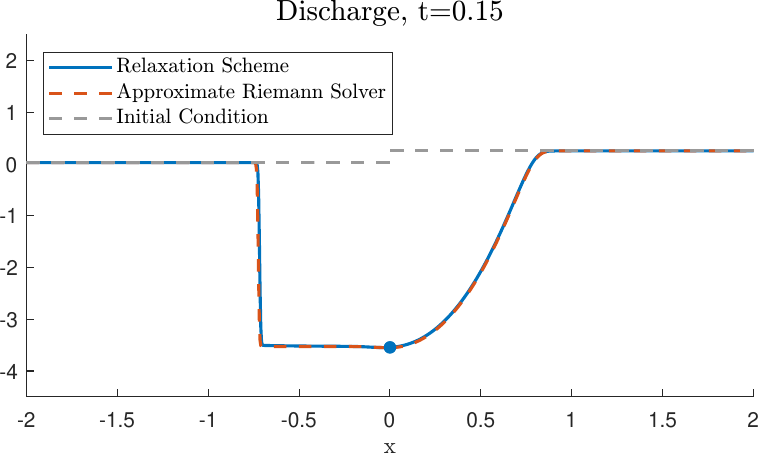} 
    \end{center} 
    \end{minipage}
    \caption{(\textit{Bottleneck}) Riemann problem \eqref{SW_Bottleneck_FT2} in $x \in (-2,2)$ with a junction in $x=0$ for a supercritical case.}
    \label{Test5_Bottleneck}
\end{figure}

\subsubsection{Bottleneck: well-balanced property}
Now, we consider the shallow water equations with a source term in presence of a bottleneck, in order to study the well-balanced property of the numerical scheme. Let us consider as initial condition, a perturbation of the steady state of a ``lake at rest'' with varying bottom topography, already proposed in \cite{leveque1998, audusse2004, noelle2006}
\begin{equation}
    z(x) = \begin{cases}
        0.25(1+\cos(10\pi(x-0.5))), \qquad &\text{if } \, 1.2 \leq x \leq 1.4,\\
        0, \qquad &\text{otherwise},
    \end{cases}
\end{equation}
where $x \in [0,2]$ and setting $\Delta x =0.04$. Since the goal of this test is to investigate the well-balanced property of the scheme, for the sake of simplicity we assume again a virtual junction in $x=1$, without variation in the cross sections of the canals. The initial condition is then given by
\begin{equation}
\label{SW_Bottleneck_WB}
    \begin{aligned}
        \text{Canal $\ell$: } &h_\ell(x,0)=0, \quad &q_\ell(x,0)= 0,\\
        \text{Canal $r$: } &h_r(x,0)=\begin{cases}
            1 + \Delta H \qquad &\text{if } \, 1.2 \leq x \leq 1.4,\\
            1, \qquad &\text{otherwise},
        \end{cases}, \quad &q_r(x,0)= 0,
    \end{aligned}
\end{equation}
with $\Delta H = 0.001$, to obtain a very small perturbation, which is more challenging to capture. Figures \ref{TestWB_Bottleneck} show a comparison of the height and the momentum computed between the numerical solution obtained by considering the relaxation scheme with a virtual junction in $x=1$ and the numerical solution obtained on the full interval $[0, 2]$, without junction conditions. 

\begin{figure}[ht!]
    \begin{minipage}[h]{0.49\linewidth}
    \begin{center}
    \includegraphics[width=0.95\linewidth]{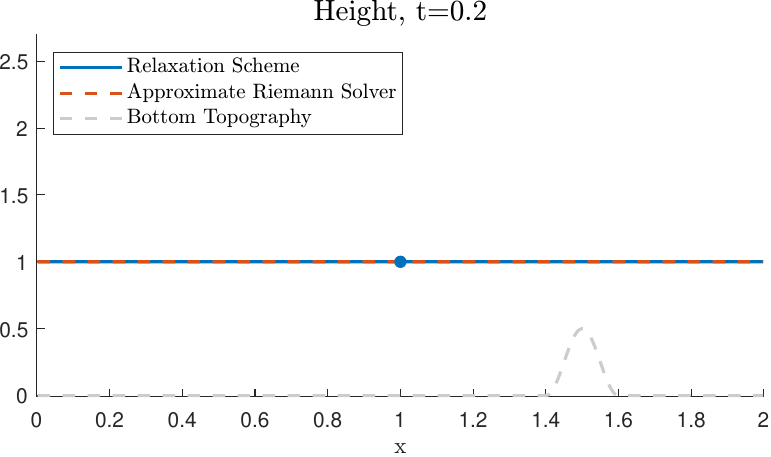} 
    \end{center}
    \end{minipage}
    \hfill
    \begin{minipage}[h]{0.49\linewidth}
    \begin{center}
    \includegraphics[width=0.95\linewidth]{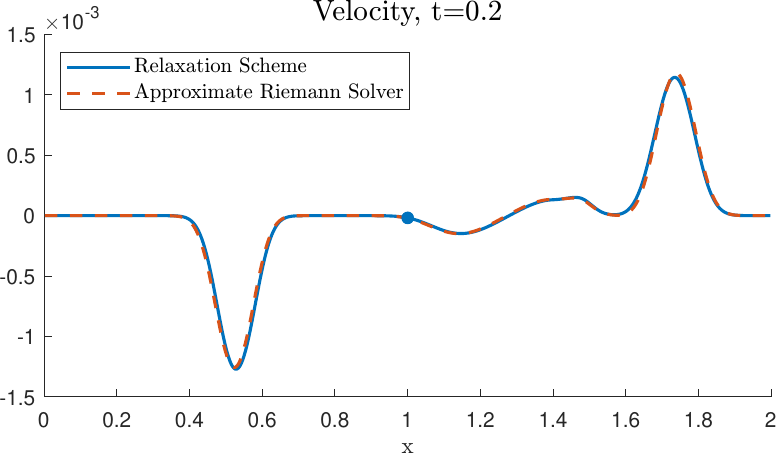} 
    \end{center} 
    \end{minipage}
    \caption{(\textit{Bottleneck}) Perturbation of \textit{lake at rest} steady state \eqref{SW_Bottleneck_WB} in $x \in (0,2)$ with a junction in $x=1$.}
    \label{TestWB_Bottleneck}
\end{figure}

We also consider as a test case a steady state with constant free surface and discontinuous bottom topography at the junction, namely
\begin{equation}
\label{eq::initial_discontinuous_topography}
    z(x) = \begin{cases}
        4, \qquad &\text{if } x \leq 1,\\
        0, \qquad &\text{otherwise},
    \end{cases}
\end{equation}
In Fig. \ref{TestWB_Bottleneck_Discontinuous} we observe that the constant steady state is preserved over time.

\begin{figure}[ht!]
    \begin{center}    \includegraphics[width=0.55\linewidth]{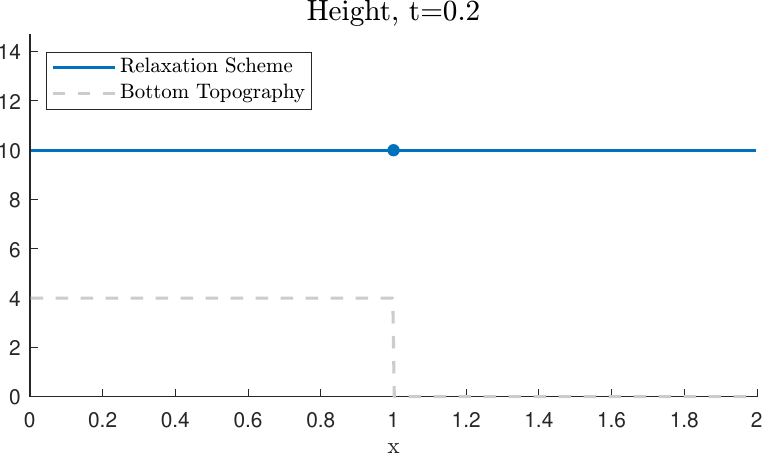} 
    \end{center} 
    \caption{(\textit{Bottleneck}) Constant steady state in $x \in (0,2)$ with junction in $x=1$ and discontinuous bottom topography \eqref{eq::initial_discontinuous_topography} 
    (gray).}
    \label{TestWB_Bottleneck_Discontinuous}
\end{figure}

\subsubsection{One Incoming - Two Outgoing Canals}
This subsection is devoted to the case of more realistic networks, consisting of a junction with one incoming and two outgoing canals. Let us fix the cross section $\alpha=1$ for all canals. We denote by subscript $1$ the incoming canal and by subscripts $2$ and $3$ the outgoing ones. The first initial condition is the following
\begin{equation}
\label{SW_1-2_Test0}
    \begin{aligned}
        \text{Canal $1$: } &h_1(x,0)=0.5, \quad &q_1(x,0)= 0.1,\\
        \text{Canal $2$: } &h_2(x,0)=0.5, \quad &q_2(x,0)= 0,\\
        \text{Canal $3$: } &h_3(x,0)=1, \quad &q_3(x,0)= 0.
    \end{aligned}
\end{equation}
In Figure \ref{Test0_OneInTwoOut} we observe the comparison of the solution obtained by applying the relaxation scheme and the solution obtained by using the approximate Riemann solver at the junction, which are perfectly overlapped.\\

We consider now the following initial data in $[-2,4]$, with the junction located in $x=1$, where the incoming canal is parametrized for $x \in [-2, 1]$, while the outgoing ones for $x \in [1, 4]$, as in \cite{brianipiccoli2016}
\begin{equation}
\label{SW_1-2_Test1}
    \begin{aligned}
        \text{Canal $1$: } &h_1(x,0)=1+\exp(-20\,x^2), \quad &q_1(x,0)= 0.5\, h_1(x,0),\\
        \text{Canal $2$: } &h_2(x,0)=1, \quad &q_2(x,0)= 0,\\
        \text{Canal $3$: } &h_3(x,0)=0.5, \quad &q_3(x,0)= 0.
    \end{aligned}
\end{equation}
The numerical results are presented in Figure \ref{Test1_OneInTwoOut}, where the comparison is again performed against the numerical solution computed with a Riemann solver at the junction.\\ 

We finally consider an initial condition given by supercritical states, where the solution to the Riemann problem cannot be explicitly computed. Let us consider as initial condition 
\begin{equation}
\label{SW_1-2_Test7}
    \begin{aligned}
        \text{Canal $1$: } &h_1(x,0)=0.25, \quad &q_1(x,0)= 0.5591,\\
        \text{Canal $2$: } &h_2(x,0)=0.15, \quad &q_2(x,0)= 0.2795,\\
        \text{Canal $3$: } &h_3(x,0)=0.1, \quad &q_3(x,0)= 0.1009.
    \end{aligned}
\end{equation}
In this case, we observe in Fig. \ref{Test7_OneInTwoOut} that all channels are initially supercritical ($Fr_i >1$). The incoming channel remains in the initial state and the two outgoing channels develop a rarefaction wave.

\begin{figure}[ht!]
    \begin{minipage}[h]{0.49\linewidth}
    \begin{center}
    \includegraphics[width=0.95\linewidth]{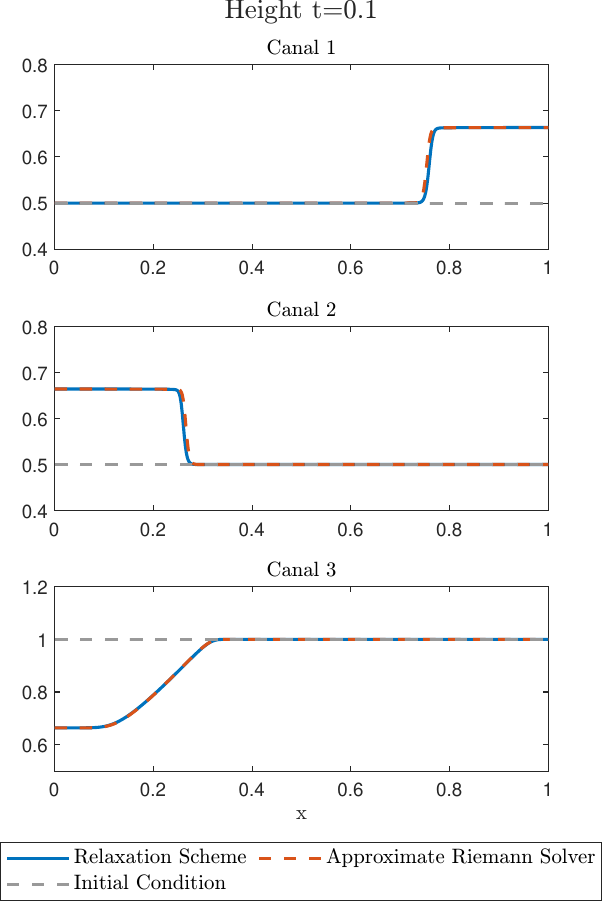} 
    \end{center}
    \end{minipage}
    \hfill
    \begin{minipage}[h]{0.49\linewidth}
    \begin{center}
    \includegraphics[width=0.95\linewidth]{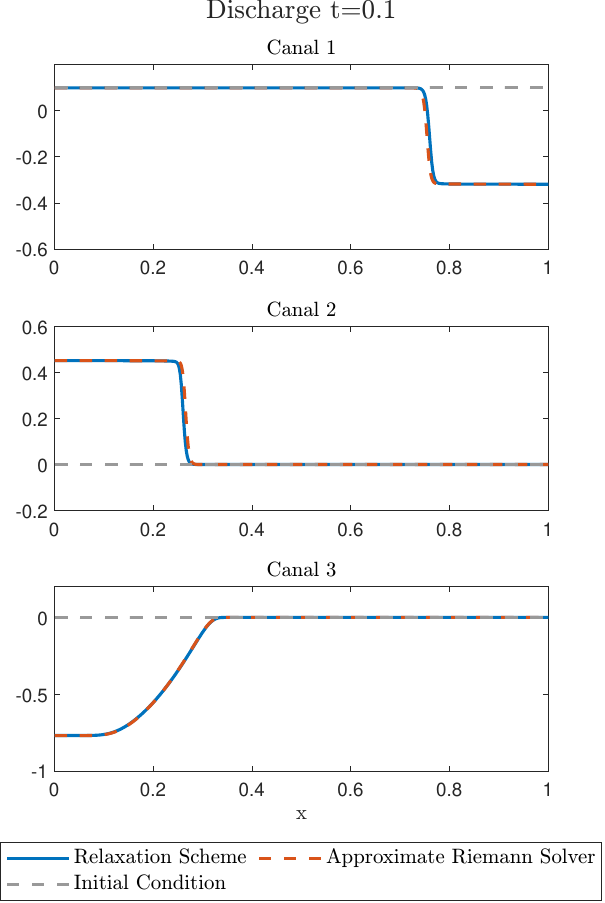} 
    \end{center} 
    \end{minipage}
    \hfill
    \vspace{0.5cm}
    \begin{minipage}[h]{0.49\linewidth}
    \begin{center}
    \includegraphics[width=0.95\linewidth]{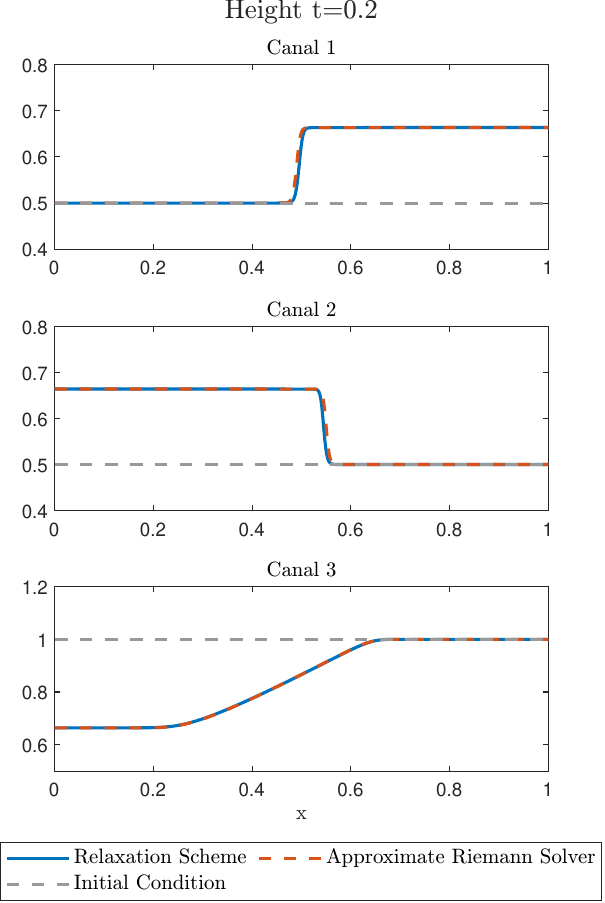} 
    \end{center}
    \end{minipage}
    \hfill
    \begin{minipage}[h]{0.49\linewidth}
    \begin{center}
    \includegraphics[width=0.95\linewidth]{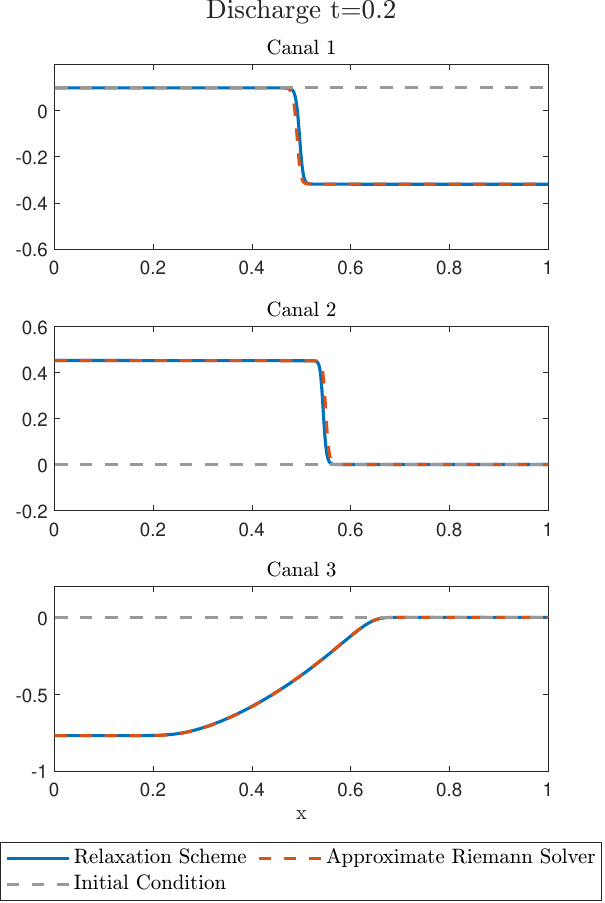} 
    \end{center} 
    \end{minipage}
    \caption{(\textit{One Incoming - Two Outgoing}) Riemann problem \eqref{SW_1-2_Test0} in $x \in (-2,4)$ with a junction in $x=1$.}
    \label{Test0_OneInTwoOut}
\end{figure}

\begin{figure}[ht!]
    \begin{minipage}[h]{0.49\linewidth}
    \begin{center}
    \includegraphics[width=0.95\linewidth]{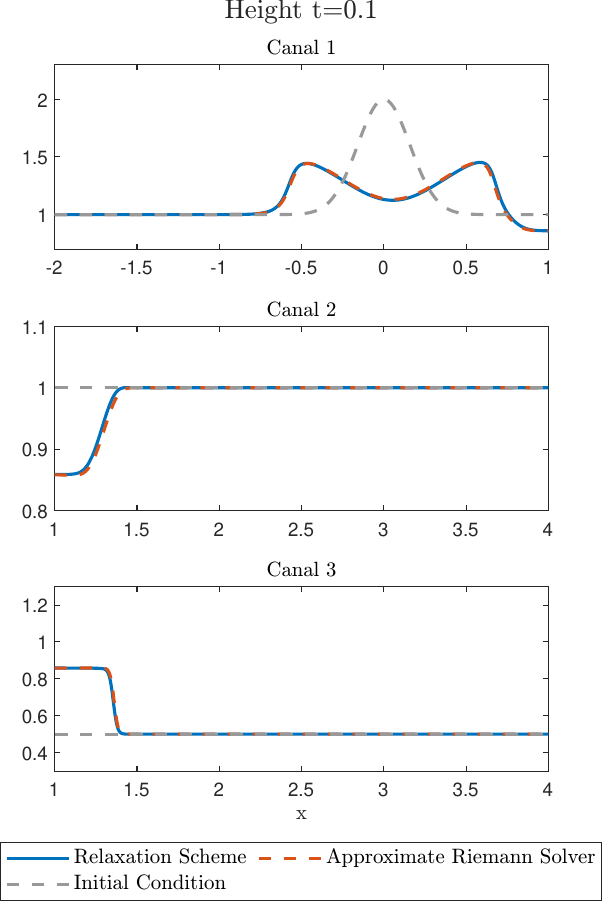} 
    \end{center}
    \end{minipage}
    \hfill
    \begin{minipage}[h]{0.49\linewidth}
    \begin{center}
    \includegraphics[width=0.95\linewidth]{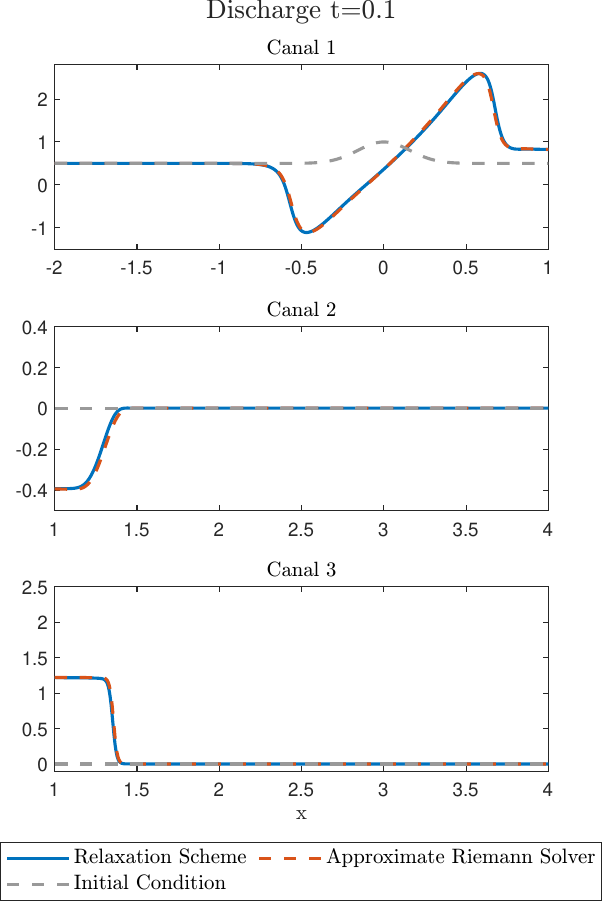} 
    \end{center} 
    \end{minipage}    
    \hfill
    \vspace{0.5cm}
    \begin{minipage}[h]{0.49\linewidth}
    \begin{center}
    \includegraphics[width=0.95\linewidth]{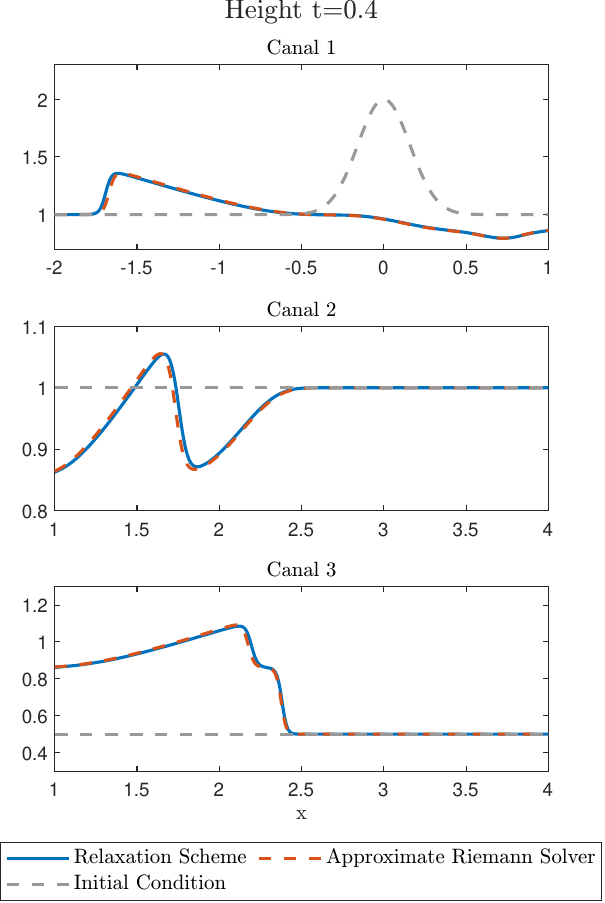} 
    \end{center}
    \end{minipage}
    \hfill
    \begin{minipage}[h]{0.49\linewidth}
    \begin{center}
    \includegraphics[width=0.95\linewidth]{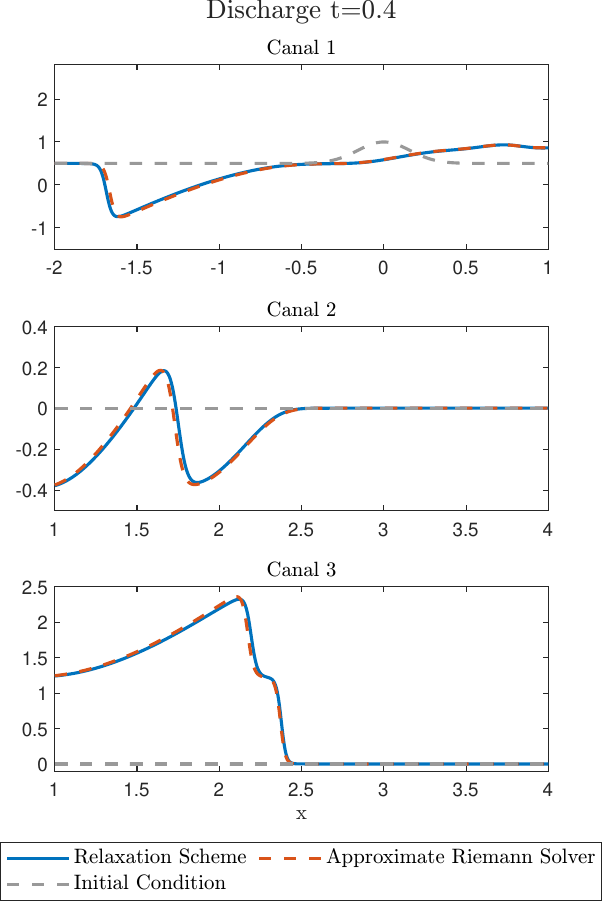} 
    \end{center} 
    \end{minipage}
    \caption{(\textit{One Incoming - Two Outgoing}) Riemann problem \eqref{SW_1-2_Test1} in $x \in (-2,4)$ with a junction in $x=1$.}
    \label{Test1_OneInTwoOut}
\end{figure}

\begin{figure}[ht!]
    \begin{minipage}[h]{0.49\linewidth}
    \begin{center}
    \includegraphics[width=0.85\linewidth]{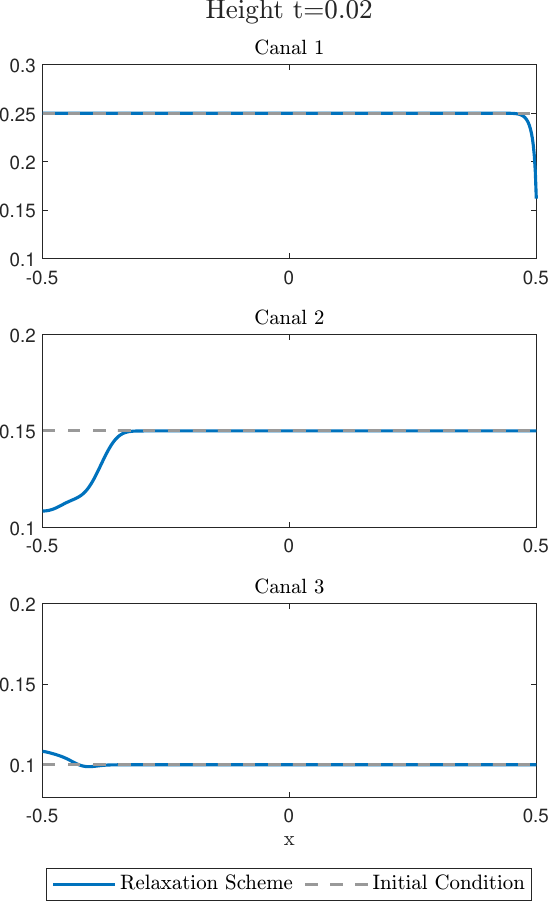} 
    \end{center}
    \end{minipage}
    \hfill
    \begin{minipage}[h]{0.49\linewidth}
    \begin{center}
    \includegraphics[width=0.85\linewidth]{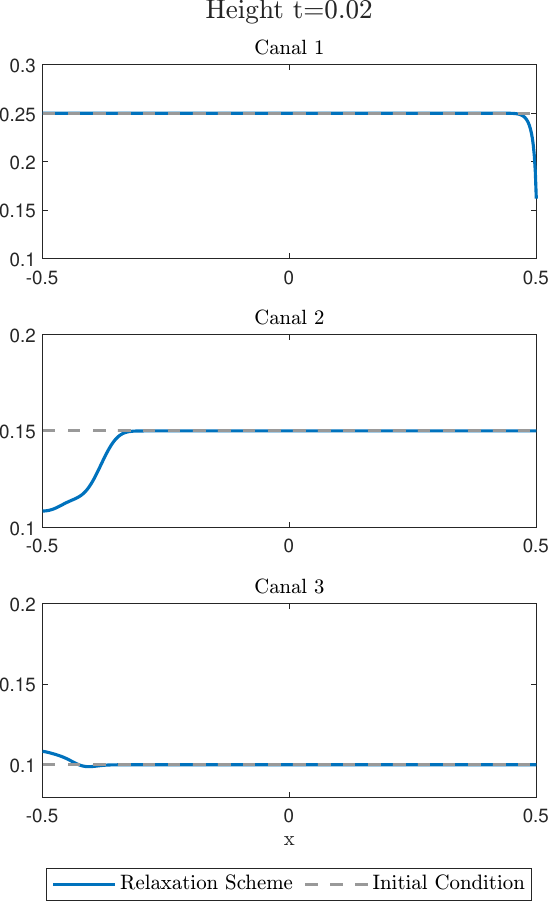} 
    \end{center} 
    \end{minipage}    
    \hfill
    \vspace{0.5cm}
    \begin{minipage}[h]{0.49\linewidth}
    \begin{center}
    \includegraphics[width=0.85\linewidth]{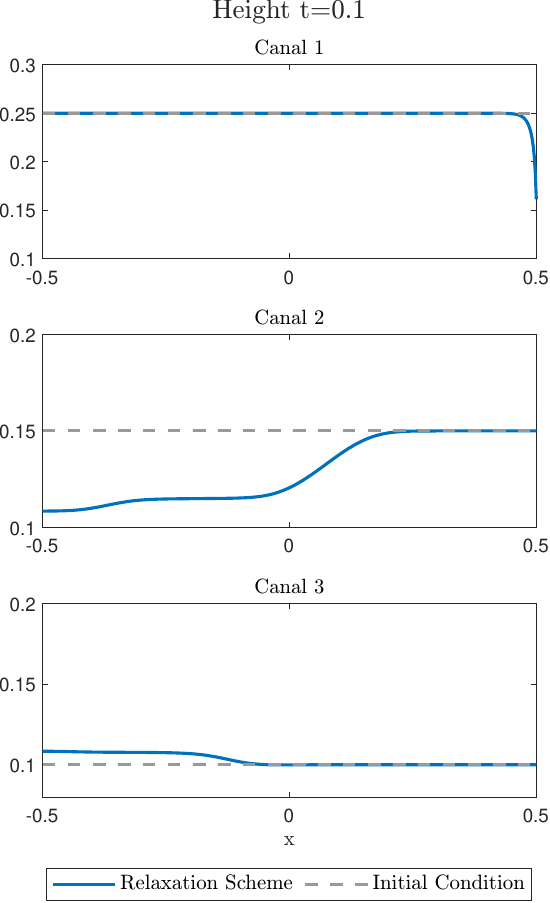} 
    \end{center}
    \end{minipage}
    \hfill
    \begin{minipage}[h]{0.49\linewidth}
    \begin{center}
    \includegraphics[width=0.85\linewidth]{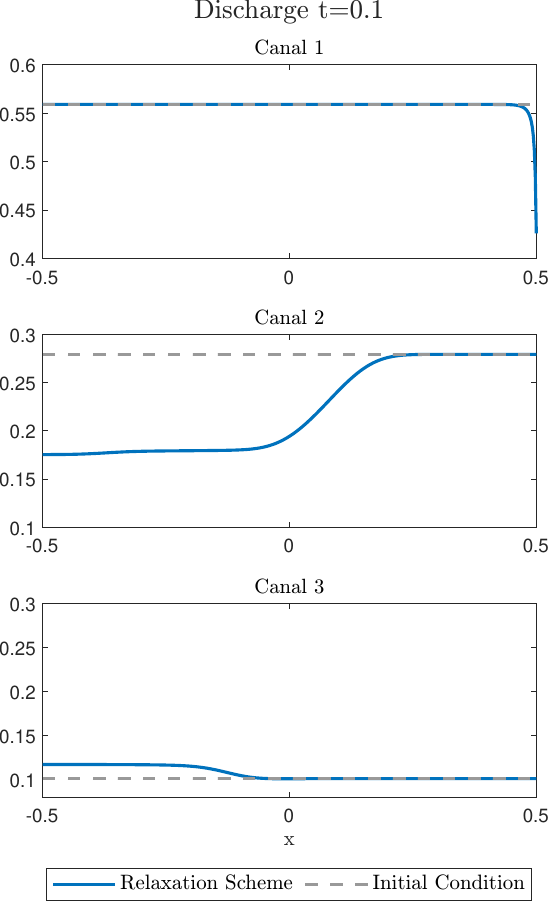} 
    \end{center} 
    \end{minipage}
    \caption{(\textit{One Incoming - Two Outgoing}) Riemann problem \eqref{SW_1-2_Test7} $x \in (-0.5,0.5)$ with a junction in $x=-0$.}
    \label{Test7_OneInTwoOut}
\end{figure}

\subsubsection{Two Incoming - One Outgoing Canals}
Let us now consider a crossing with two incoming canals and one outgoing canal, all parametrized in the same domain. The initial condition is the following \cite{brianipiccoli2016}
\begin{equation}
\label{SW_2-1_Test1}
    \begin{aligned}
        &\text{Canal 1: } &h_1(x,0)=
        \begin{cases}
            1.5 \qquad &\text{if } x \in [0, 0.2] \cup [0.4, 0.6] \cup [0.6, 0.8],\\
            1 \qquad &\text{otherwise}
        \end{cases} \\
        & &q_1(x,0)= 0.5\,h_1(x,0),\\[6pt]
        &\text{Canal 2: } &h_2(x,0)=\begin{cases}
            1.5 \qquad &\text{if } x \in [0, 0.2] \cup [0.4, 0.6] \cup [0.6, 0.8],\\
            1 \qquad &\text{otherwise}
        \end{cases} \\
        & &q_2(x,0)= 0.5\,h_2(x,0),\\[6pt]
        &\text{Canal 3: } &h_3(x,0)=1, \\ \quad & &q_3(x,0)= 0.
    \end{aligned}
\end{equation}
\begin{figure}[htbp]
    \begin{minipage}[h]{0.49\linewidth}
    \begin{center}
    \includegraphics[width=0.95\linewidth]{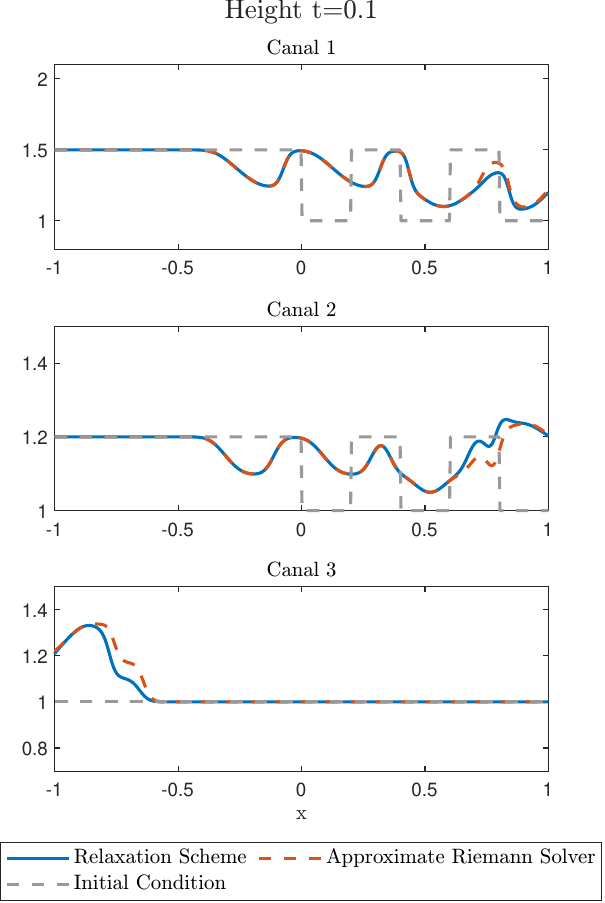} 
    \end{center}
    \end{minipage}
    \hfill
    \begin{minipage}[h]{0.49\linewidth}
    \begin{center}
    \includegraphics[width=0.95\linewidth]{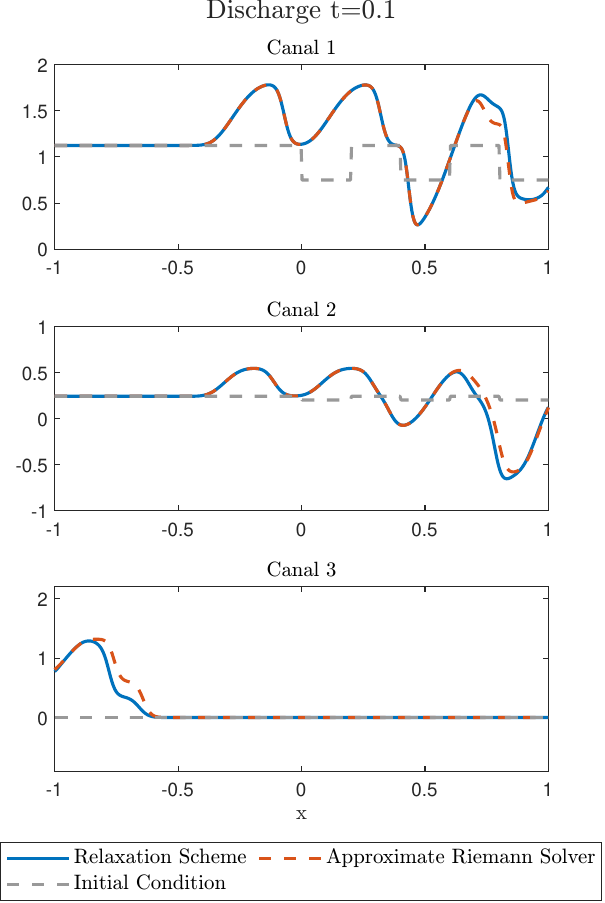} 
    \end{center} 
    \end{minipage}
    \caption{(\textit{Two Incoming - One Outgoing}) Riemann problem \eqref{SW_2-1_Test1} with $x \in (-1,1)$ and the junction in $x=0$. We show the numerical solution at time $t=0.1$ (\textit{top}).}
    \label{Test2_TwoInOneOut}
\end{figure}

\begin{figure}[htbp]\ContinuedFloat
    \begin{minipage}[h]{0.49\linewidth}
    \begin{center}
    \includegraphics[width=0.95\linewidth]{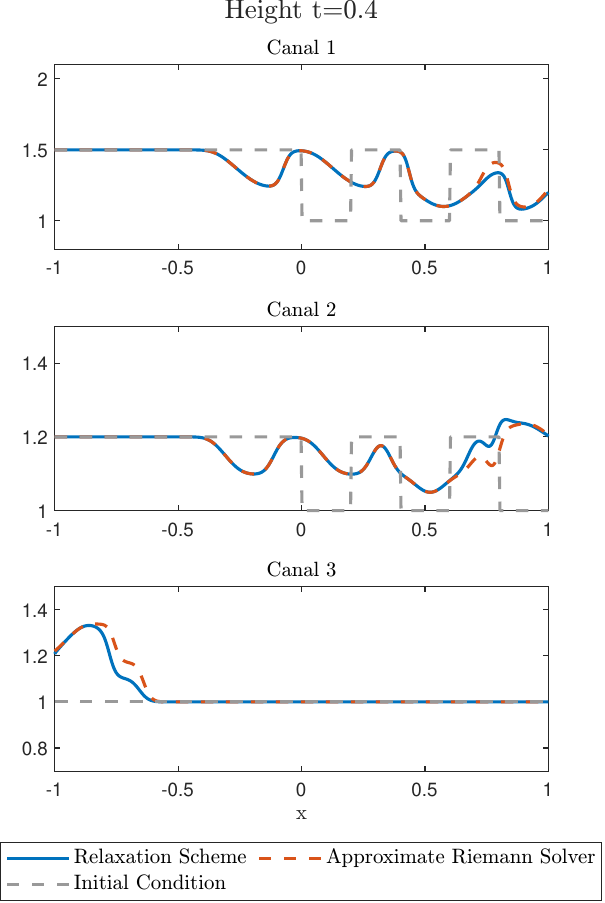} 
    \end{center}
    \end{minipage}
    \hfill
    \begin{minipage}[h]{0.49\linewidth}
    \begin{center}
    \includegraphics[width=0.95\linewidth]{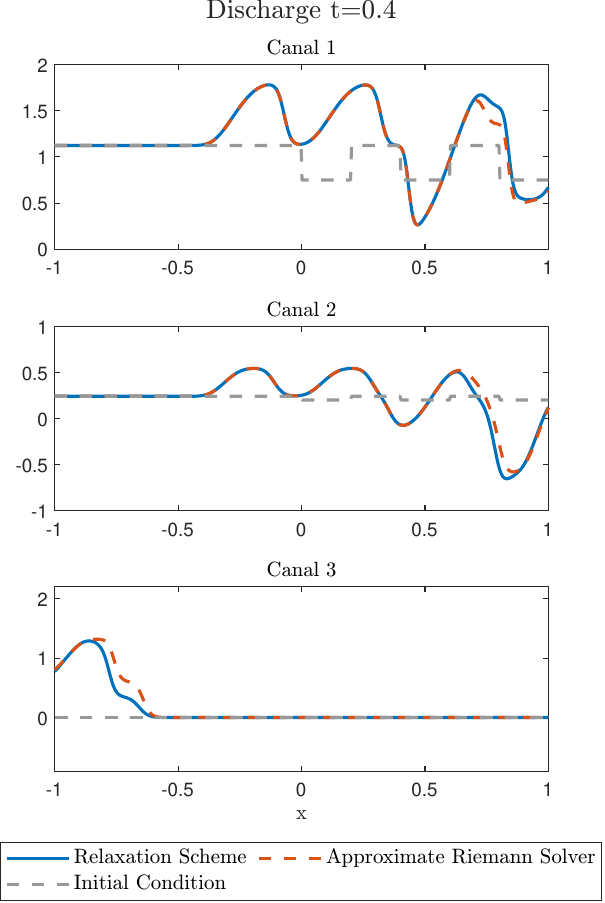} 
    \end{center} 
    \end{minipage}
    \caption{(\textit{Two Incoming - One Outgoing}) Riemann problem \eqref{SW_2-1_Test1} with $x \in (-1,1)$ and the junction in $x=0$. We show the numerical solution at time $t=0.4$.}
\end{figure}

As expected, also for this network we have good agreement between the solution computed by using relaxation schemes and the solution obtained with the approximate Riemann solver at the junction. In Fig. \ref{Test2_TwoInOneOut} we plot the solution at two different times $t=0.1$ and $t=0.4$. 

\newpage

\subsection{The Arterial Network}

\subsubsection{Bottleneck: well-balanced property}
For the blood flow equations we investigate analogously the well-balanced property, by considering a perturbation of the steady state, known as ``man at eternal rest''. Let us consider the set of parameters proposed in \cite{delestre2013},  $\nu=0$, $\beta = 1.0 \cdot 10^8$ Pa/m, $\rho=1060$ m$^3$, $L=0.14$m and the final time $T=5$ and the following initial condition with initial null velocity, namely $q_\ell(x,0) \equiv 0$ and $q_r(x,0) = 0$. We consider as initial vessel section $a_\ell(x,0) = \pi \, R_\ell(x,0)$, where
\begin{equation}
    R_\ell(x,0) = \begin{cases}
        R_0 \qquad &\text{if } \, x \in [0, x_1],\\
        R_0 + \displaystyle \frac{\Delta R}{2} \left[\sin \left(\frac{x-x_1}{x_2-x_1}\pi - \frac{\pi}{2} \right) +1 \right] \qquad &\text{if } \, x \in (x_1, x_2),\\
        R_0 + \Delta R \qquad &\text{if } \, x \in [x_2, x_3],\\
        R_0 + \displaystyle\frac{\Delta R}{2} \left[\cos \left(\frac{x-x_3}{x_4-x_3}\pi \right) +1 \right] \qquad &\text{if } \, x \in (x_3, x_c],
    \end{cases}
\end{equation}
with $R_0 = 4 \cdot 10^{-3}$, $\Delta R = 1.0 \cdot 10^{-3}$, $x_1 = 1.0 \cdot 10^{-2}$, $x_2 = 3.05 \cdot 10^{-2}$, $x_3 = 4.95 \cdot 10^{-2}$, $x_4 = 7.0 \cdot 10^{-2}$ and $x_c = L/2$ (the center of the domain). The right artery has the following initial condition
\begin{equation}
    R_\ell(x,0) = \begin{cases}
        R_0 \qquad &\text{if } \, x \in [x_4, L],\\
        R_0 + \displaystyle\frac{\Delta R}{2} \left[\cos \left(\frac{x-x_3}{x_4-x_3}\pi \right) +1 \right] \qquad &\text{if } \, x \in [x_c, x_4).
    \end{cases}
\end{equation}
In Figure \ref{TestWB_Bottleneck_Arterial} we observe that the numerical solution obtained with the well-balanced scheme described in Section \ref{Section:Well_Balanced} perfectly preserves the reference steady state.

\begin{figure}[ht!]
    \begin{minipage}[h]{0.49\linewidth}
    \begin{center}
    \includegraphics[width=0.95\linewidth]{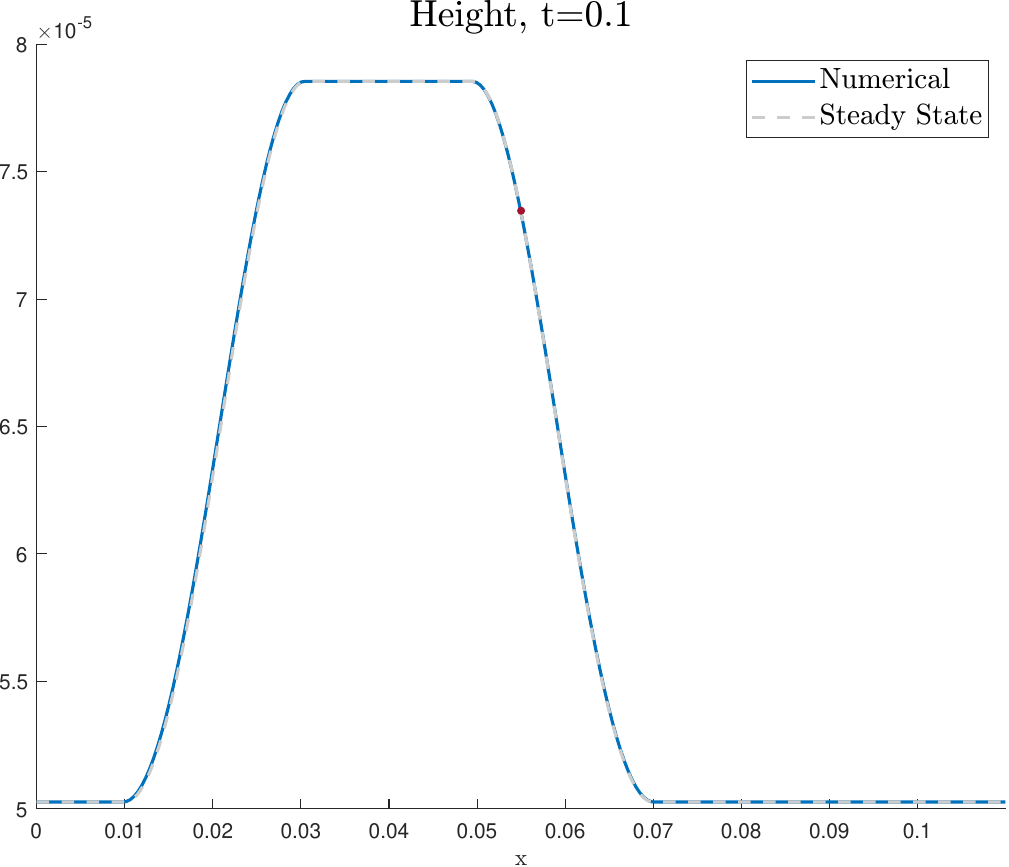} 
    \end{center}
    \end{minipage}
    \hfill
    \begin{minipage}[h]{0.49\linewidth}
    \begin{center}
    \includegraphics[width=0.95\linewidth]{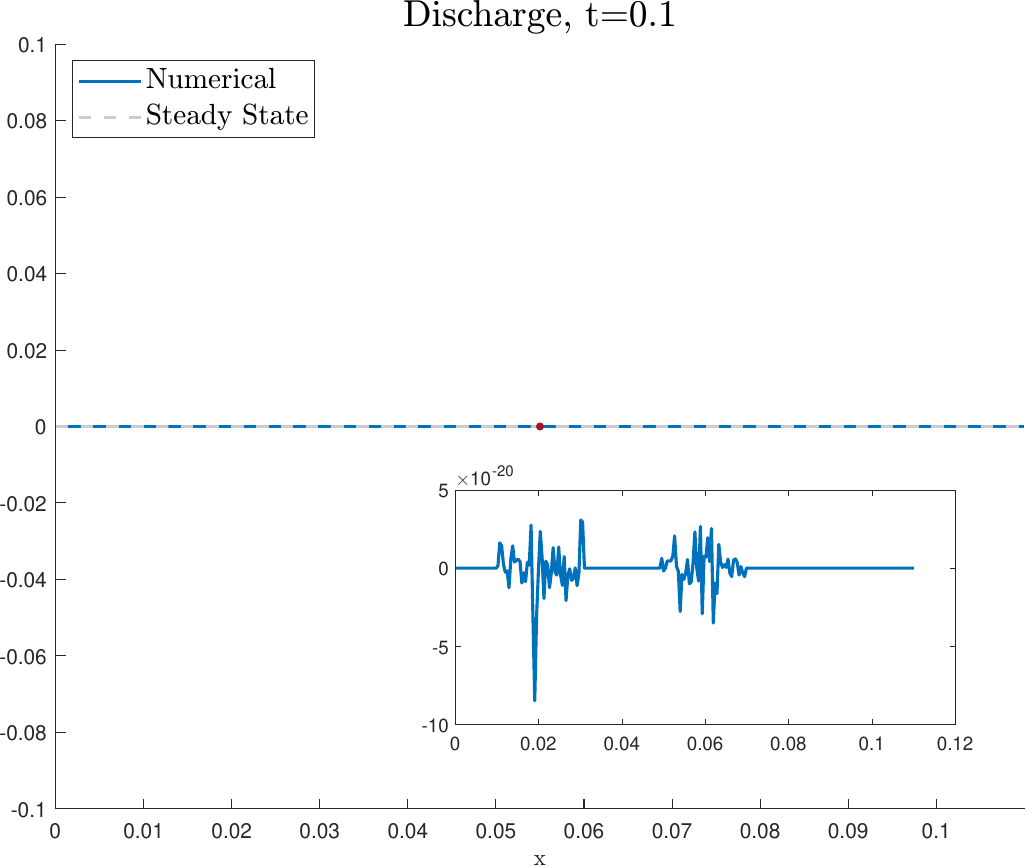} 
    \end{center} 
    \end{minipage}
    \caption{(\textit{Bottleneck}) Perturbation of \textit{man at eternal rest} steady state in $x \in (0,2)$ with a junction in $x=1$.}
    \label{TestWB_Bottleneck_Arterial}
\end{figure}

\subsubsection{One Incoming - Two Outgoing Arteries}
Let us consider a network of one incoming artery and two outgoing ones.\\ 
The first scenario we would like to reproduce is a symmetric bifurcation. This could model for instance the aortic bifurcation, in which the abdominal aorta bifurcates into the left and right common iliac arteries. Let us consider the following initial condition, inspired by \cite{boileau2015}
\begin{equation}
\label{BloodFlow_1-2_Test1}
    \begin{aligned}
        \text{Artery $1$: } &a_1(x,0)=1.8055, \quad &q_1(x,0)= 3.6110 \cdot 10^{-1},\\
        \text{Artery $2$: } &a_2(x,0)=9.4744 \cdot 10^{-1}, \quad &q_2(x,0)= 1.8949 \cdot 10^{-1},\\
        \text{Artery $3$: } &a_3(x,0)=9.4744 \cdot 10^{-1}, \quad &q_3(x,0)= 1.8949 \cdot 10^{-1}.
    \end{aligned}
\end{equation}
In Fig. \ref{Fig:BF_1-2_Test1} we plot the solution, observing that the symmetry of the bifurcation results into a balance of the quantities involved in the second and third arteries.\\
Let us now consider a non-symmetric initial condition, 
given by
\begin{equation}
\label{BloodFlow_1-2_Test2}
    \begin{aligned}
        \text{Artery $1$: } &a_1(x,0)=1.8055, \quad &q_1(x,0)= 3.6110 \cdot 10^{-1},\\
        \text{Artery $2$: } &a_2(x,0)=1.13097, \quad &q_2(x,0)= 2.2619 \cdot 10^{-1},\\
        \text{Artery $3$: } &a_3(x,0)=6.3617 \cdot 10^{-1}, \quad &q_3(x,0)= 1.2723 \cdot 10^{-1}.
    \end{aligned}
\end{equation}
In Fig. \ref{Fig:BF_1-2_Test1} we plot the solution, where the non-symmetric initial condition leads to a non-symmetric evolution both of the sections $a_2$, $a_3$ and the discharges $q_2$, $q_3$. 

\begin{figure}[ht!]
    \begin{minipage}[h]{0.49\linewidth}
    \begin{center}
    \includegraphics[width=0.75\linewidth]{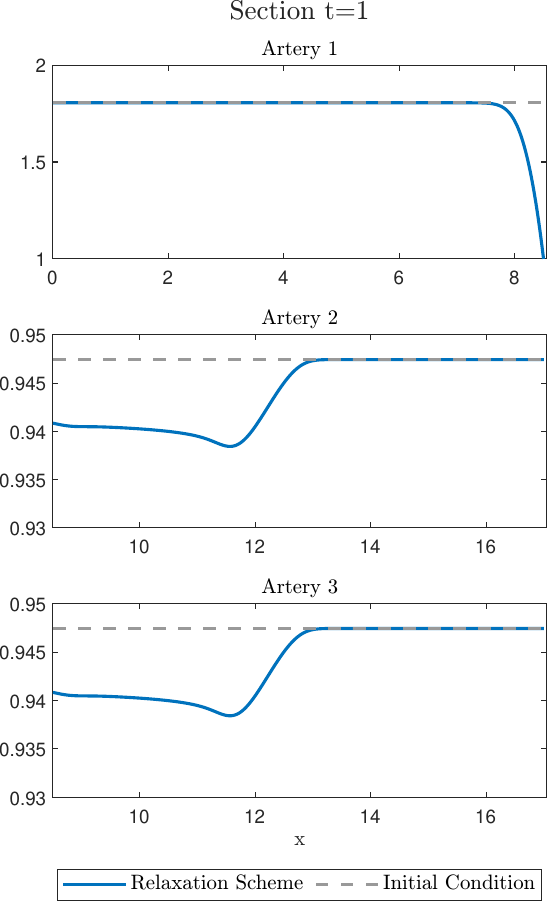} 
    \end{center}
    \end{minipage}
    \hfill
    \begin{minipage}[h]{0.49\linewidth}
    \begin{center}
    \includegraphics[width=0.75\linewidth]{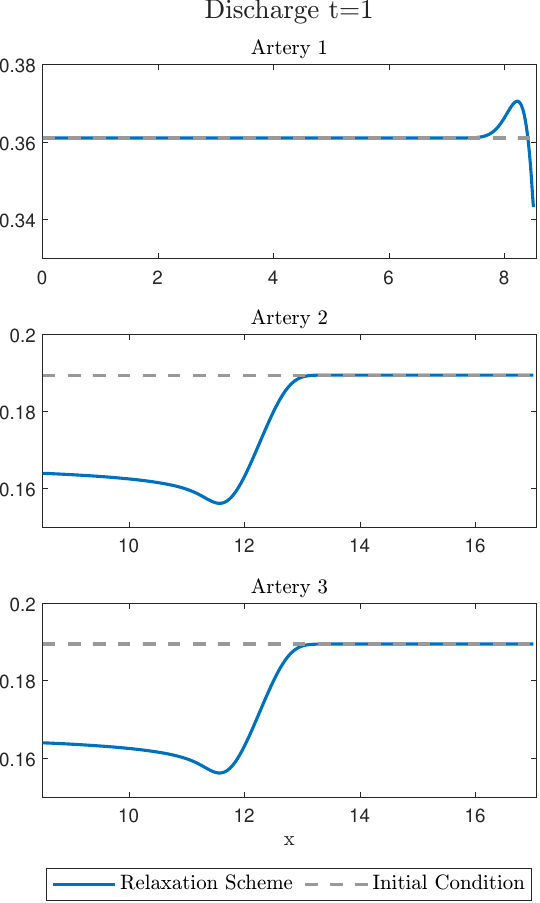} 
    \end{center} 
    \end{minipage}
    \caption{(\textit{One Incoming - Two Outgoing}) Initial condition \eqref{BloodFlow_1-2_Test1} with $x \in (0,17)$ and the junction in $x=8.5$. We show the numerical solution at time $t=1$.}
    \label{Fig:BF_1-2_Test1}
\end{figure}

\begin{figure}[ht!]
    \begin{minipage}[h]{0.49\linewidth}
    \begin{center}
    \includegraphics[width=0.75\linewidth]{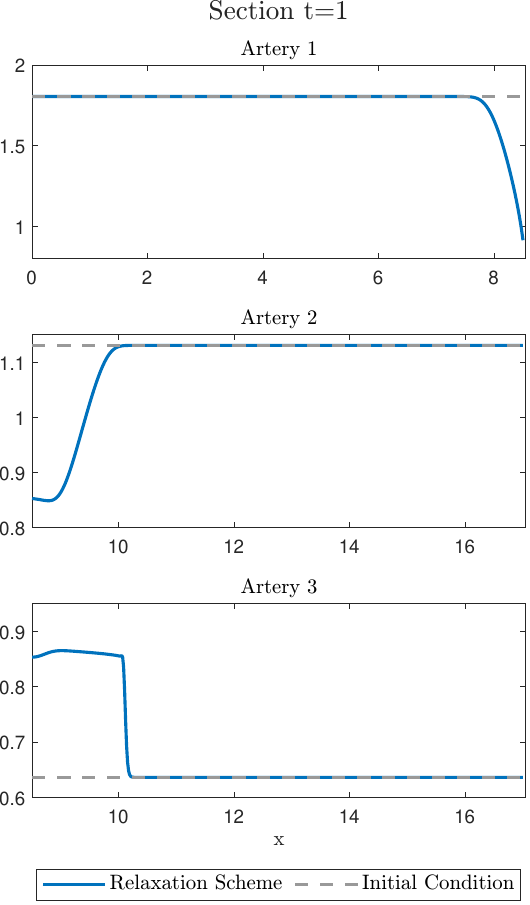} 
    \end{center}
    \end{minipage}
    \hfill
    \begin{minipage}[h]{0.49\linewidth}
    \begin{center}
    \includegraphics[width=0.75\linewidth]{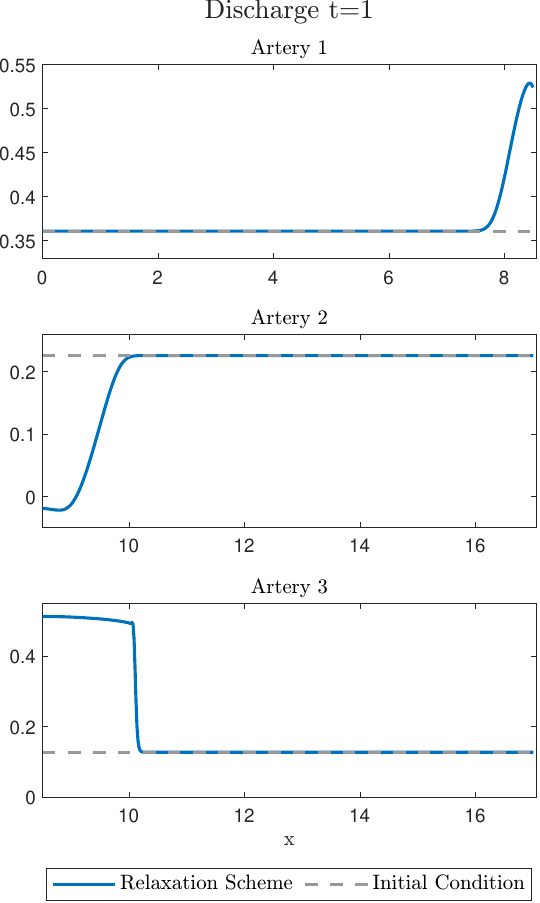} 
    \end{center} 
    \end{minipage}
    \caption{(\textit{One Incoming - Two Outgoing}) Initial condition \eqref{BloodFlow_1-2_Test2} with $x \in (0, 17)$ and the junction in $x=8.55$. We show the numerical solution at time $t=1$.}
    \label{Fig:BF_1-2_Test2}
\end{figure}

\FloatBarrier

\section{Conclusion}
\label{sec:Conclusion}
In this paper we proposed a new approach based on relaxation schemes to approximate hyperbolic conservation laws in networks. The most problematic aspect of networks is the approximation of solutions at the junction, which usually requires the use of approximate Riemann solver. The introduction of relaxation schemes allows to explicitly compute the admissible solutions at the junction, without increasing the computational cost. This strategy has been applied to shallow water equations and blood flow equations, proving efficacy in the treatment of both subcritical and supercritical conditions. Well-balanced techniques are considered to treat source terms, leading to steady-state preserving schemes.\\
Further perspectives include the application of this strategy to different types of equations, including networks of advection-diffusion equations \cite{tenna2025}. Moreover, including source terms in chemotaxis equations in networks with jump transmissions conditions \cite{BNR2025} represents a challenging research project, since jump discontinuities at the junction need proper treatment in the reconstruction procedure of well-balanced schemes.  

\section*{Acknowledgement}
The author received funding from the European Union's Horizon Europe research and innovation program under the Marie Sklodowska-Curie Doctoral Network DataHyKing (Grant No. 101072546). The author is member of GNCS-INdAM research group.

\appendix
\section{Solutions to the Riemann problem}
\label{Appendix:Solution_Riemann}
\subsubsection{The Riemann Problem for Shallow Waters on a Flat Bottom}
Let us consider the Riemann problem for the homogeneous system
\begin{equation}
    \begin{cases}
        \partial_t u + \partial_x F(u) = 0,\\[10pt]
        u(x,0)=\begin{cases}
            u_\ell \qquad &\text{ if } x \leq 0,\\
            u_r \qquad &\text{ if } x > 0,
        \end{cases}
    \end{cases}
\end{equation}
where the left and right states are given by
\begin{equation}
    u_\ell=\begin{pmatrix}
		h_\ell\\
		h_\ell\,v_\ell
	\end{pmatrix}, \qquad 
    u_r=\begin{pmatrix}
		h_r\\
		h_r\,v_r
	\end{pmatrix}
\end{equation}
The Riemann solution consists of two waves, each of which is a shock or rarefaction, related to different eigenvectors. In the fluvial regime, due to the subcritical condition \eqref{eq::chap4_subcritical_condition}, there will be one left wave with negative speed and one right wave with positive speed. The solution consists of the $\ell$-wave (corresponding to the left going wave) and the $r$-wave (corresponding to the right going wave), separated by an intermediate state $u^*=(h^*, q^*)$. To determine $h^*$ we need to impose some conditions which guarantee that the intermediate states are connected by a physically correct $\ell$- (or $r$-) wave. To this aim, we can define two functions $\Phi_\ell$ and $\Phi_r$ by
\begin{equation}
\label{Phi_L_Riemann}
    \Phi_\ell(h) = \begin{cases}
        v_\ell - 2 \left(\sqrt{gh} - \sqrt{gh_\ell} \right), \qquad &\text{if } h < h_\ell \quad \text{(rarefaction)}\\[10pt]
        v_\ell - (h-h_\ell)\sqrt{g \displaystyle \frac{h+h_\ell}{2\,h\,h_\ell}}, \qquad &\text{if } h > h_\ell \quad \text{(shock wave)},\\
    \end{cases}
\end{equation}
and
\begin{equation}
\label{Phi_R_Riemann}
    \Phi_r(h) = \begin{cases}
        v_r + 2 \left(\sqrt{gh} - \sqrt{gh_r} \right), \qquad &\text{if } h < h_r \quad \text{(rarefaction)}\\[10pt]
        v_r + (h-h_r)\sqrt{g \displaystyle \frac{h+h_r}{2\,h\,h_r}}, \qquad &\text{if } h > h_r \quad \text{(shock wave)}.\\
    \end{cases}
\end{equation}
The functions $\Phi_\ell(h)$ and $\Phi_r(h)$ return the value of $v$ such that $(h,hv)$ can be connected with the physically correct wave to $u_\ell$ and to $u_r$ respectively. The intermediate state $w^*$ is determined so that $\Phi_\ell(h^*)=\Phi_r(h^*)$. This means that the intermediate state is unique if in the $(h,v)$-plane the two curves $\Phi_\ell(h)$ and $\Phi_r(h)$ intersect in $(h^*, v^*)$. Since $\Phi_\ell$ is strictly decreasing and unbounded and $\Phi_r$ is strictly increasing and unbounded, this is equivalent to  
\begin{equation}
\label{inequality_riemann}
    v_\ell + 2 \sqrt{gh_\ell} = \Phi_\ell(0) \, \geq \, \Phi_r(0) = v_r - 2 \sqrt{gh_r}.
\end{equation}
In the region defined by \eqref{inequality_riemann}, the Riemann problem has a unique solution.
We will also consider the regime transitional curves, which corresponds to the critical case $|v| = \sqrt{g\,h}$, namely
\begin{equation}
    \mathcal{C}^+ = \sqrt{g\,h}, \qquad \mathcal{C}^- = -\sqrt{g\,h}.
\end{equation}
For a detailed analysis on the Lax-curves for the shallow water system we refer to \cite{brianipiccoli2018}.

\subsubsection{The Riemann Problem for the Arterial Blood Flow on a Flat Bottom}
Let us consider again the Riemann problem for the homogeneous system
\begin{equation}
    \begin{cases}
        \partial_t u + \partial_x F(u) = 0,\\[10pt]
        u(x,0)=\begin{cases}
            u_\ell \qquad &\text{ if } x \leq 0,\\
            u_r \qquad &\text{ if } x > 0,
        \end{cases}
    \end{cases}
\end{equation}
where now the left and right states are given by
\begin{equation}
    u_\ell=\begin{pmatrix}
		a_\ell\\
		a_\ell\,v_\ell
	\end{pmatrix}, \qquad 
    u_r=\begin{pmatrix}
		a_r\\
		a_r\,v_r
	\end{pmatrix}
\end{equation}
Also in this case the Riemann solution consists of two waves, each of which is a shock or rarefaction, related to different eigenvectors \eqref{eigenvectors_blood}. We explicit give the solution to the Riemann problem in the case $\nu=0$ and $\alpha=1$. Analogously to the shallow water case, we can define two functions $\Psi_\ell$ and $\Psi_r$ as
\begin{equation}
\label{Psi_L_Riemann}
    \Psi_\ell(a) = \begin{cases}
        v_\ell - 4 \sqrt{ \displaystyle \frac{\beta}{2\,\rho}} \Bigg( \displaystyle \sqrt[4]{\frac{a}{a_0}} - \sqrt[4]{\frac{a_\ell}{a_0}} \Bigg), \qquad &\text{if } a < a_\ell \quad \text{(rarefaction)}\\[13pt]
        v_\ell - \sqrt{\displaystyle \frac{(a-a_\ell)}{a\,a_\ell} \, \frac{\pi(a)+\pi(a_\ell)}{\rho}}, \qquad &\text{if } a > a_\ell \quad \text{(shock wave)},\\
    \end{cases}
\end{equation}
and
\begin{equation}
\label{Psi_R_Riemann}
    \Psi_r(a) = \begin{cases}
       v_r + 4 \sqrt{ \displaystyle \frac{\beta}{2\,\rho}} \Bigg( \displaystyle \sqrt[4]{\frac{a}{a_0}} - \sqrt[4]{\frac{a_r}{a_0}} \Bigg), \qquad &\text{if } a < a_r \quad \text{(rarefaction)}\\[13pt]
        v_r + \sqrt{\displaystyle \frac{(a-a_r)}{a\,a_r} \, \frac{\pi(a)+\pi(a_r)}{\rho}}, \qquad &\text{if } a > a_r \quad \text{(shock wave)},\\
    \end{cases}
\end{equation}
The functions $\Psi_\ell(h)$ and $\Psi_r(h)$ return the value of $v$ such that $(a,av)$ can be connected with the physically correct wave to $u_\ell$ and to $u_r$ respectively.

\bibliography{References}

@Book{bouchut2004,
 Author = {Bouchut, Fran{\c{c}}ois},
 Title = {Nonlinear stability of finite volume methods for hyperbolic conservation laws and well-balanced schemes for sources.},
 FSeries = {Frontiers in Mathematics},
 Series = {Front. Math.},
 ISSN = {1660-8046},
 ISBN = {3-7643-6665-6},
 Year = {2004},
 Publisher = {Basel: Birkh{\"a}user},
 Language = {English},
 Keywords = {65M12,65-02,35L05,65M06},
 zbMATH = {2174321},
 Zbl = {1086.65091}
}

@Article{brianipiccoli2018,
 Author = {Briani, Maya and Piccoli, Benedetto},
 Title = {Fluvial to torrential phase transition in open canals},
 FJournal = {Networks and Heterogeneous Media},
 Journal = {Netw. Heterog. Media},
 ISSN = {1556-1801},
 Volume = {13},
 Number = {4},
 Pages = {663--690},
 Year = {2018},
 Language = {English},
 DOI = {10.3934/nhm.2018030},
 Keywords = {35Q35,35L60,35L67,76B15,65M60,65L05},
 zbMATH = {7058969},
 Zbl = {1420.35224}
}

@Article{brianipiccoli2016,
 Author = {Briani, Maya and Piccoli, Benedetto and Qiu, Jing-Mei},
 Title = {Notes on {RKDG} methods for shallow-water equations in canal networks},
 FJournal = {Journal of Scientific Computing},
 Journal = {J. Sci. Comput.},
 ISSN = {0885-7474},
 Volume = {68},
 Number = {3},
 Pages = {1101--1123},
 Year = {2016},
 Language = {English},
 DOI = {10.1007/s10915-016-0172-2},
 Keywords = {76M10,65M60,76B15},
 zbMATH = {6645168},
 Zbl = {1437.76020}
}

@article{audusse2004,
 author = {Audusse, Emmanuel and Bouchut, Fran{\c{c}}ois and Bristeau, Marie-Odile and Klein, Rupert and Perthame, Beno{\^{\i}}t},
 title = {A fast and stable well-balanced scheme with hydrostatic reconstruction for shallow water flows},
 fjournal = {SIAM Journal on Scientific Computing},
 journal = {SIAM J. Sci. Comput.},
 issn = {1064-8275},
 volume = {25},
 number = {6},
 pages = {2050--2065},
 year = {2004},
 language = {English},
 doi = {10.1137/S1064827503431090},
 keywords = {65M12,76M12,35L65},
 zbMATH = {2138724},
 Zbl = {1133.65308}
}

@article{noelle2006,
 author = {Noelle, Sebastian and Pankratz, Normann and Puppo, Gabriella and Natvig, Jostein R.},
 title = {Well-balanced finite volume schemes of arbitrary order of accuracy for shallow water flows},
 fjournal = {Journal of Computational Physics},
 journal = {J. Comput. Phys.},
 issn = {0021-9991},
 volume = {213},
 number = {2},
 pages = {474--499},
 year = {2006},
 language = {English},
 doi = {10.1016/j.jcp.2005.08.019},
 keywords = {76M12,76B15,86A05},
 zbMATH = {5020831},
 Zbl = {1088.76037}
}

@article{aregba2000discrete,
 author = {Aregba-Driollet, Denise and Natalini, Roberto},
 title = {Discrete kinetic schemes for multidimensional systems of conservation laws},
 fjournal = {SIAM Journal on Numerical Analysis},
 journal = {SIAM J. Numer. Anal.},
 issn = {0036-1429},
 volume = {37},
 number = {6},
 pages = {1973--2004},
 year = {2000},
 language = {English},
 doi = {10.1137/S0036142998343075},
 keywords = {65M06,65M12,35L65},
 zbMATH = {1519661},
 Zbl = {0964.65096}
}

@article{bressanpiccoli2014,
 author = {Bressan, Alberto and {\v{C}}ani{\'c}, Sun{\v{c}}ica and Garavello, Mauro and Herty, Michael and Piccoli, Benedetto},
 title = {Flows on networks: recent results and perspectives},
 fjournal = {EMS Surveys in Mathematical Sciences},
 journal = {EMS Surv. Math. Sci.},
 issn = {2308-2151},
 volume = {1},
 number = {1},
 pages = {47--111},
 year = {2014},
 language = {English},
 doi = {10.4171/EMSS/2},
 keywords = {35R02,35L65,34H05,93C20,93C15,93B05,49K15,49K20},
 zbMATH = {6316076},
 Zbl = {1301.35193}
}

@book{garavellopiccoli2006,
 author = {Garavello, Mauro and Piccoli, Benedetto},
 title = {Traffic flow on networks},
 fseries = {AIMS Series on Applied Mathematics},
 series = {AIMS Ser. Appl. Math.},
 volume = {1},
 isbn = {978-1-60133-000-0},
 year = {2006},
 publisher = {Springfield, MO: American Institute of Mathematical Sciences (AIMS)},
 language = {English},
 keywords = {90B20,90B10,90-02},
 zbMATH = {5130020},
 Zbl = {1136.90012}
}

@article{canickim2003,
 author = {{\v{C}}ani{\'c}, Sun{\v{c}}ica and Kim, Eun Heui},
 title = {Mathematical analysis of the quasilinear effects in a hyperbolic model blood flow through compliant axi-symmetric vessels},
 fjournal = {Mathematical Methods in the Applied Sciences},
 journal = {Math. Methods Appl. Sci.},
 issn = {0170-4214},
 volume = {26},
 number = {14},
 pages = {1161--1186},
 year = {2003},
 language = {English},
 doi = {10.1002/mma.407},
 keywords = {76Z05,35Q30,74F10,92C35},
 zbMATH = {1997831},
 Zbl = {1141.76484}
}

@article{bastin2009open,
  title={Open problems and research perspectives for irrigation channels},
  author={Bastin, Georges and Bayen, Alexandre M and D’Apice, CIRO and Litrico, Xavier and Piccoli, Benedetto and others},
  journal={Netw. Heterog. Media},
  volume={4},
  number={2},
  year={2009}
}

@article{perthame2001,
 author = {Perthame, B. and Simeoni, C.},
 title = {A kinetic scheme for the {Saint}-{Venant} system with a source term},
 fjournal = {Calcolo},
 journal = {Calcolo},
 issn = {0008-0624},
 volume = {38},
 number = {4},
 pages = {201--231},
 year = {2001},
 language = {English},
 doi = {10.1007/s10092-001-8181-3},
 keywords = {65M12,65M15,76M12,76M20},
 zbMATH = {1846809},
 Zbl = {1008.65066}
}

@article{ranocha2017,
 author = {Ranocha, Hendrik},
 title = {Shallow water equations: split-form, entropy stable, well-balanced, and positivity preserving numerical methods},
 fjournal = {GEM - International Journal on Geomathematics},
 journal = {GEM. Int. J. Geomath.},
 issn = {1869-2672},
 volume = {8},
 number = {1},
 pages = {85--133},
 year = {2017},
 language = {English},
 doi = {10.1007/s13137-016-0089-9},
 keywords = {65M70,65M60,65M06,65M12},
 zbMATH = {6853862},
 Zbl = {1432.65156}
}

@article{fjordholm2011,
 author = {Fjordholm, Ulrik S. and Mishra, Siddhartha and Tadmor, Eitan},
 title = {Well-balanced and energy stable schemes for the shallow water equations with discontinuous topography},
 fjournal = {Journal of Computational Physics},
 journal = {J. Comput. Phys.},
 issn = {0021-9991},
 volume = {230},
 number = {14},
 pages = {5587--5609},
 year = {2011},
 language = {English},
 doi = {10.1016/j.jcp.2011.03.042},
 keywords = {35Q35,65M08,86A05},
 zbMATH = {5920301},
 Zbl = {1452.35149}
}

@article{dansac2016,
 author = {Michel-Dansac, Victor and Berthon, Christophe and Clain, St{\'e}phane and Foucher, Fran{\c{c}}oise},
 title = {A well-balanced scheme for the shallow-water equations with topography},
 fjournal = {Computers \& Mathematics with Applications},
 journal = {Comput. Math. Appl.},
 issn = {0898-1221},
 volume = {72},
 number = {3},
 pages = {568--593},
 year = {2016},
 language = {English},
 doi = {10.1016/j.camwa.2016.05.015},
 keywords = {76M20,65M06,65M25,76B15},
 zbMATH = {6701755},
 Zbl = {1359.76206}
}

@article{castro2006,
 author = {Castro, Manuel and Gallardo, Jos{\'e} M. and Par{\'e}s, Carlos},
 title = {High order finite volume schemes based on reconstruction of states for solving hyperbolic systems with nonconservative products. {Applications} to shallow-water systems},
 fjournal = {Mathematics of Computation},
 journal = {Math. Comput.},
 issn = {0025-5718},
 volume = {75},
 number = {255},
 pages = {1103--1134},
 year = {2006},
 language = {English},
 doi = {10.1090/S0025-5718-06-01851-5},
 keywords = {65M06,76M12,35L65,76B15,65M15},
 zbMATH = {5028528},
 Zbl = {1096.65082}
}

@article{delestre2013,
 author = {Delestre, O. and Lagr{\'e}e, P.-Y.},
 title = {A `well-balanced' finite volume scheme for blood flow simulation},
 fjournal = {International Journal for Numerical Methods in Fluids},
 journal = {Int. J. Numer. Methods Fluids},
 issn = {0271-2091},
 volume = {72},
 number = {2},
 pages = {177--205},
 year = {2013},
 language = {English},
 doi = {10.1002/fld.3736},
 keywords = {76Z05,76M12,65M08,92C35},
 zbMATH = {7317843},
 Zbl = {1455.76215}
}

@book{formaggiaquarteroni2009,
 editor = {Formaggia, Luca and Quarteroni, Alfio and Veneziani, Alessandro},
 title = {Cardiovascular mathematics. {Modeling} and simulation of the circulatory system},
 fseries = {MS\&A. Modeling, Simulation and Applications},
 series = {MS\&A, Model. Simul. Appl.},
 issn = {2037-5255},
 volume = {1},
 isbn = {978-88-470-1151-9; 978-88-470-1152-6},
 year = {2009},
 publisher = {Milano: Springer},
 language = {English},
 doi = {10.1007/978-88-470-1152-6},
 keywords = {92-02,76-02,92C35,76Z05},
 zbMATH = {5368508},
 Zbl = {1300.92005}
}

@article{bertagliapareschi2023,
 author = {Bertaglia, Giulia and Pareschi, Lorenzo},
 title = {Multiscale constitutive framework of one-dimensional blood flow modeling: asymptotic limits and numerical methods},
 fjournal = {Multiscale Modeling \& Simulation},
 journal = {Multiscale Model. Simul.},
 issn = {1540-3459},
 volume = {21},
 number = {3},
 pages = {1237--1267},
 year = {2023},
 language = {English},
 doi = {10.1137/23M1554230},
 keywords = {76Z05,76A10,76M45,76M12,74F10,74L15,92C10},
 zbMATH = {7752613},
 Zbl = {1525.76118}
}

@article{brettinatalini2018,
 author = {Bretti, Gabriella and Natalini, Roberto},
 title = {Numerical approximation of nonhomogeneous boundary conditions on networks for a hyperbolic system of chemotaxis modeling the {Physarum} dynamics},
 fjournal = {Journal of Computational Methods in Sciences and Engineering},
 journal = {J. Comput. Methods Sci. Eng.},
 issn = {1472-7978},
 volume = {18},
 number = {1},
 pages = {85--115},
 year = {2018},
 language = {English},
 doi = {10.3233/JCM-170773},
 keywords = {92C17,92C42,65M06,05C38,05C90},
 zbMATH = {6941756},
 Zbl = {1398.92032}
}

@article{brettinataliniribot2014,
 author = {Bretti, G. and Natalini, R. and Ribot, M.},
 title = {A hyperbolic model of chemotaxis on a network: a numerical study},
 fjournal = {European Series in Applied and Industrial Mathematics (ESAIM): Mathematical Modelling and Numerical Analysis},
 journal = {ESAIM, Math. Model. Numer. Anal.},
 issn = {0764-583X},
 volume = {48},
 number = {1},
 pages = {231--258},
 year = {2014},
 language = {English},
 doi = {10.1051/m2an/2013098},
 keywords = {92C17,65M06,35L50,35Q92,92-08},
 zbMATH = {6291616},
 Zbl = {1285.92004}
}

@article{brettinatalinipiccoli2007,
 author = {Bretti, Gabriella and Natalini, Roberto and Piccoli, Benedetto},
 title = {A fluid-dynamic traffic model on road networks},
 fjournal = {Archives of Computational Methods in Engineering},
 journal = {Arch. Comput. Methods Eng.},
 issn = {1134-3060},
 volume = {14},
 number = {2},
 pages = {139--172},
 year = {2007},
 language = {English},
 doi = {10.1007/s11831-007-9004-8},
 keywords = {76M12,90B20,90B10,76-02},
 zbMATH = {5223533},
 Zbl = {1127.76004}
}

@incollection{garavellopiccoli2009,
 author = {Garavello, Mauro and Piccoli, Benedetto},
 title = {Riemann solvers for conservation laws at a node},
 booktitle = {Hyperbolic problems. Theory, numerics and applications. Contributed talks. Proceedings of the 12th international conference on hyperbolic problems, June 9--13, 2008},
 isbn = {978-0-8218-4730-5; 978-0-8218-4728-2},
 pages = {595--604},
 year = {2009},
 publisher = {Providence, RI: American Mathematical Society (AMS)},
 language = {English},
 keywords = {65M06,35L65,35L67},
 zbMATH = {5674636},
 Zbl = {1185.65147}
}

@article{colombogoatinpiccoli2010,
 author = {Colombo, Rinaldo M. and Goatin, Paola and Piccoli, Benedetto},
 title = {Road networks with phase transitions},
 fjournal = {Journal of Hyperbolic Differential Equations},
 journal = {J. Hyperbolic Differ. Equ.},
 issn = {0219-8916},
 volume = {7},
 number = {1},
 pages = {85--106},
 year = {2010},
 language = {English},
 doi = {10.1142/S0219891610002025},
 keywords = {35L65,90B20,35L67},
 zbMATH = {5709668},
 Zbl = {1189.35176}
}

@misc{BNR2025,
 author = {Briani, Maya and Natalini, Roberto and Ribot, Magali},
 title = {A relaxation scheme for the equations of isentropic gas dynamics on a network with jump transmission conditions},
 year = {2025},
 howpublished = {Preprint, {arXiv}:2507.05779 [math.{NA}] (2025)},
 url = {https://arxiv.org/abs/2507.05779},
 arXiv = {arXiv:2507.05779}
}

@Article{aregba1996,
 Author = {Aregba-Driollet, Denise and Natalini, Roberto},
 Title = {Convergence of relaxation schemes for conservation laws},
 FJournal = {Applicable Analysis},
 Journal = {Appl. Anal.},
 ISSN = {0003-6811},
 Volume = {61},
 Number = {1-2},
 Pages = {163--190},
 Year = {1996},
 Language = {English},
 DOI = {10.1080/00036819608840453},
 Keywords = {65M12,65M06,35L65},
 zbMATH = {1022094},
 Zbl = {0872.65086}
}

@Article{lafittemelis2017,
 Author = {Lafitte, Pauline and Melis, Ward and Samaey, Giovanni},
 Title = {A high-order relaxation method with projective integration for solving nonlinear systems of hyperbolic conservation laws},
 FJournal = {Journal of Computational Physics},
 Journal = {J. Comput. Phys.},
 ISSN = {0021-9991},
 Volume = {340},
 Pages = {1--25},
 Year = {2017},
 Language = {English},
 DOI = {10.1016/j.jcp.2017.03.027},
 Keywords = {65M06,35L65},
 URL = {lirias.kuleuven.be/handle/123456789/578632},
 zbMATH = {6818910},
 Zbl = {1380.65161}
}

@misc{tenna2025,
 author = {Tenna, Tommaso},
 title = {Projective integration schemes for nonlinear degenerate parabolic systems},
 year = {2025},
 howpublished = {Preprint, {arXiv}:2503.05017 [math.{NA}] (2025)},
 keywords = {65M08,65L04,65M12},
 url = {https://arxiv.org/abs/2503.05017},
 arXiv = {arXiv:2503.05017}
}

@book{boscarino2025,
 author = {Boscarino, S. and Pareschi, L. and Russo, G.},
 title = {Implicit-explicit methods for evolutionary partial differential equations},
 fseries = {Mathematical Modeling and Computation},
 series = {Math. Model. Comput.},
 volume = {24},
 isbn = {978-1-61197-819-3; 978-1-61197-820-9},
 year = {2025},
 publisher = {Philadelphia, PA: Society for Industrial {and} Applied Mathematics (SIAM)},
 language = {English},
 doi = {10.1137/1.9781611978209},
 keywords = {65Mxx},
 zbMATH = {7927359}
}

@article{desveaux2022,
 author = {Desveaux, Vivien and Masset, Alice},
 title = {A fully well-balanced scheme for shallow water equations with {Coriolis} force},
 fjournal = {Communications in Mathematical Sciences},
 journal = {Commun. Math. Sci.},
 issn = {1539-6746},
 volume = {20},
 number = {7},
 pages = {1875--1900},
 year = {2022},
 language = {English},
 doi = {10.4310/CMS.2022.v20.n7.a4},
 keywords = {65M08,65M12},
 zbMATH = {7632190},
 Zbl = {1499.65458}
}

@article{audusse2021,
 author = {Audusse, E. and Dubos, V. and Duran, A. and Gaveau, N. and Nasseri, Y. and Penel, Y.},
 title = {Numerical approximation of the shallow water equations with {Coriolis} source term},
 fjournal = {European Series in Applied and Industrial Mathematics (ESAIM): Proceedings and Surveys},
 journal = {ESAIM, Proc. Surv.},
 issn = {2267-3059},
 volume = {70},
 pages = {31--44},
 year = {2021},
 language = {English},
 doi = {10.1051/proc/202107003},
 keywords = {76M12,76B10,76U05,86A05},
 zbMATH = {7752051},
 Zbl = {1525.76065}
}

@article{lizhao2011,
 author = {Li, Tong and Zhao, Kun},
 title = {Global existence and long-time behavior of entropy weak solutions to a quasilinear hyperbolic blood flow model},
 fjournal = {Networks and Heterogeneous Media},
 journal = {Netw. Heterog. Media},
 issn = {1556-1801},
 volume = {6},
 number = {4},
 pages = {625--646},
 year = {2011},
 language = {English},
 doi = {10.3934/nhm.2011.6.625},
 keywords = {35L60,35L50,35L65,92C35,35B40},
 zbMATH = {6147610},
 Zbl = {1262.35156}
}

@article{leveque1998,
 author = {LeVeque, Randall J.},
 title = {Balancing source terms and flux gradients in high-resolution {Godunov} methods: {The} quasi-steady wave-propagation algorithm},
 fjournal = {Journal of Computational Physics},
 journal = {J. Comput. Phys.},
 issn = {0021-9991},
 volume = {146},
 number = {1},
 pages = {346--365},
 year = {1998},
 language = {English},
 doi = {10.1006/jcph.1998.6058},
 keywords = {76M20,76B15,76L05,35L65},
 url = {semanticscholar.org/paper/388011d5ea8825e6e54f8d6bb3018e62752aba79},
 zbMATH = {1240722},
 Zbl = {0931.76059}
}

@article{greenberg1996,
 author = {Greenberg, James M. and Le Roux, Alain-Yves},
 title = {A well-balanced scheme for the numerical processing of source terms in hyperbolic equations},
 fjournal = {SIAM Journal on Numerical Analysis},
 journal = {SIAM J. Numer. Anal.},
 issn = {0036-1429},
 volume = {33},
 number = {1},
 pages = {1--16},
 year = {1996},
 language = {English},
 doi = {10.1137/0733001},
 keywords = {65M06,65M12,35L65,76B15},
 zbMATH = {869875},
 Zbl = {0876.65064}
}

@article{gosse2001,
 author = {Gosse, Laurent},
 title = {A well-balanced scheme using non-conservative products designed for hyperbolic systems of conservation laws with source terms},
 fjournal = {M\(^3\)AS. Mathematical Models \& Methods in Applied Sciences},
 journal = {Math. Models Methods Appl. Sci.},
 issn = {0218-2025},
 volume = {11},
 number = {2},
 pages = {339--365},
 year = {2001},
 language = {English},
 doi = {10.1142/S021820250100088X},
 keywords = {65M06,76M20,65M12,35L65,76N15},
 zbMATH = {1882777},
 Zbl = {1018.65108}
}

@article{gossetoscani2003,
 author = {Gosse, Laurent and Toscani, Giuseppe},
 title = {Space localization and well-balanced schemes for discrete kinetic models in diffusive regimes},
 fjournal = {SIAM Journal on Numerical Analysis},
 journal = {SIAM J. Numer. Anal.},
 issn = {0036-1429},
 volume = {41},
 number = {2},
 pages = {641--658},
 year = {2003},
 language = {English},
 doi = {10.1137/S0036142901399392},
 keywords = {82C80,65M06,35F25,35Q30},
 zbMATH = {2027737},
 Zbl = {1130.82340}
}

@article{gosse2000,
 author = {Gosse, L.},
 title = {A well-balanced flux-vector splitting scheme designed for hyperbolic systems of conservation laws with source terms},
 fjournal = {Computers \& Mathematics with Applications},
 journal = {Comput. Math. Appl.},
 issn = {0898-1221},
 volume = {39},
 number = {9-10},
 pages = {135--159},
 year = {2000},
 language = {English},
 doi = {10.1016/S0898-1221(00)00093-6},
 keywords = {65M06,76M20,35L65},
 zbMATH = {1471226},
 Zbl = {0963.65090}
}

@book{pedley1980, 
place={Cambridge}, 
series={Cambridge Monographs on Mechanics}, 
title={The Fluid Mechanics of Large Blood Vessels},
publisher={Cambridge University Press},
author={Pedley, T. J.},
year={1980},
collection={Cambridge Monographs on Mechanics}
}

@article{boileau2015,
author = {Boileau, Etienne and Nithiarasu, Perumal and Blanco, Pablo J. and Müller, Lucas O. and Fossan, Fredrik Eikeland and Hellevik, Leif Rune and Donders, Wouter P. and Huberts, Wouter and Willemet, Marie and Alastruey, Jordi},
title = {A benchmark study of numerical schemes for one-dimensional arterial blood flow modelling},
journal = {International Journal for Numerical Methods in Biomedical Engineering},
volume = {31},
number = {10},
keywords = {1D arterial hæmodynamics, 1D numerical schemes, benchmark test cases, pulse wave propagation},
doi = {https://doi.org/10.1002/cnm.2732},
url = {https://onlinelibrary.wiley.com/doi/abs/10.1002/cnm.2732},
eprint = {https://onlinelibrary.wiley.com/doi/pdf/10.1002/cnm.2732},
year = {2015}
}

@article{olufsen2000numerical,
  title={Numerical simulation and experimental validation of blood flow in arteries with structured-tree outflow conditions},
  author={Olufsen, Mette S and Peskin, Charles S and Kim, Won Yong and Pedersen, Erik M and Nadim, Ali and Larsen, Jesper},
  journal={Annals of biomedical engineering},
  volume={28},
  number={11},
  pages={1281--1299},
  year={2000},
  publisher={Springer}
}

@article{jinxin1995,
 author = {Jin, Shi and Xin, Zhouping},
 title = {The relaxation schemes for systems of conservation laws in arbitrary space dimensions},
 fjournal = {Communications on Pure and Applied Mathematics},
 journal = {Commun. Pure Appl. Math.},
 issn = {0010-3640},
 volume = {48},
 number = {3},
 pages = {235--276},
 year = {1995},
 language = {English},
 doi = {10.1002/cpa.3160480303},
 keywords = {65M06,35L65},
 zbMATH = {769165},
 Zbl = {0826.65078}
}

@article{pareschirusso2005,
 author = {Pareschi, Lorenzo and Russo, Giovanni},
 title = {Implicit-explicit {Runge}-{Kutta} schemes and applications to hyperbolic systems with relaxation},
 fjournal = {Journal of Scientific Computing},
 journal = {J. Sci. Comput.},
 issn = {0885-7474},
 volume = {25},
 number = {1-2},
 pages = {129--155},
 year = {2005},
 language = {English},
 doi = {10.1007/BF02728986},
 keywords = {65L06,65C20,82D25,35L65,65M06},
 zbMATH = {5839668},
 Zbl = {1203.65111}
}

@article{torloabgrall2020,
 author = {Abgrall, R{\'e}mi and Torlo, Davide},
 title = {High order asymptotic preserving deferred correction implicit-explicit schemes for kinetic models},
 fjournal = {SIAM Journal on Scientific Computing},
 journal = {SIAM J. Sci. Comput.},
 issn = {1064-8275},
 volume = {42},
 number = {3},
 pages = {b816--b845},
 year = {2020},
 language = {English},
 doi = {10.1137/19M128973X},
 keywords = {65M60,65M12,65L04,35L65,35R09,45K05},
 zbMATH = {7226088},
 Zbl = {1473.65183}
}

@article{pupposemplice2016,
 author = {Puppo, Gabriella and Semplice, Matteo},
 title = {Well-balanced high order {1D} schemes on non-uniform grids and entropy residuals},
 fjournal = {Journal of Scientific Computing},
 journal = {J. Sci. Comput.},
 issn = {0885-7474},
 volume = {66},
 number = {3},
 pages = {1052--1076},
 year = {2016},
 language = {English},
 doi = {10.1007/s10915-015-0056-x},
 keywords = {65M08,35L65,65M50,65M15},
 zbMATH = {6760783},
 Zbl = {1371.65093}
}

@article{colombo2008,
 author = {Colombo, R. M. and Herty, M. and Sachers, V.},
 title = {On {{\(2\times2\)}} conservation laws at a junction},
 fjournal = {SIAM Journal on Mathematical Analysis},
 journal = {SIAM J. Math. Anal.},
 issn = {0036-1410},
 volume = {40},
 number = {2},
 pages = {605--622},
 year = {2008},
 language = {English},
 doi = {10.1137/070690298},
 keywords = {35L65,76N10,34B45,35R35,35L45,35A05},
 zbMATH = {5534315},
 Zbl = {1171.35430}
}

@article{gugat2004,
 author = {Gugat, Martin and Leugering, G{\"u}nter and Schmidt, E. J. P. Georg},
 title = {Global controllability between steady supercritical flows in channel networks},
 fjournal = {Mathematical Methods in the Applied Sciences},
 journal = {Math. Methods Appl. Sci.},
 issn = {0170-4214},
 volume = {27},
 number = {7},
 pages = {781--802},
 year = {2004},
 language = {English},
 doi = {10.1002/mma.471},
 keywords = {93C20,93B05,35L45},
 zbMATH = {2085662},
 Zbl = {1047.93028}
}

@article{muller2013,
 author = {M{\"u}ller, Lucas O. and Par{\'e}s, Carlos and Toro, Eleuterio F.},
 title = {Well-balanced high-order numerical schemes for one-dimensional blood flow in vessels with varying mechanical properties},
 fjournal = {Journal of Computational Physics},
 journal = {J. Comput. Phys.},
 issn = {0021-9991},
 volume = {242},
 pages = {53--85},
 year = {2013},
 language = {English},
 doi = {10.1016/j.jcp.2013.01.050},
 keywords = {92C35,76Z05,65M08},
 zbMATH = {6334650},
 Zbl = {1323.92066}
}

@article{lucca2025,
 author = {Lucca, A. and M{\"u}ller, L. O. and Fraccarollo, L. and Toro, E. F. and Dumbser, M.},
 title = {On simple well-balanced semi-implicit and explicit numerical methods for blood flow in networks of elastic vessels with applications to {FFR} prediction},
 fjournal = {Journal of Computational Physics},
 journal = {J. Comput. Phys.},
 issn = {0021-9991},
 volume = {538},
 pages = {34},
 note = {Id/No 114188},
 year = {2025},
 language = {English},
 doi = {10.1016/j.jcp.2025.114188},
 keywords = {76Z05,76M12,65M08,35L60,74S20,76M20,92C35},
 zbMATH = {8088189}
}

@article{muller2015,
 author = {M{\"u}ller, Lucas O. and Blanco, Pablo J.},
 title = {A high order approximation of hyperbolic conservation laws in networks: application to one-dimensional blood flow},
 fjournal = {Journal of Computational Physics},
 journal = {J. Comput. Phys.},
 issn = {0021-9991},
 volume = {300},
 pages = {423--437},
 year = {2015},
 language = {English},
 doi = {10.1016/j.jcp.2015.07.056},
 keywords = {76Z05,76M12,92-08,92C35},
 url = {hdl.handle.net/11572/230377},
 zbMATH = {6660762},
 Zbl = {1349.76945}
}

@article{borsche2014,
	author = {Borsche, R. and G{\"o}ttlich, S. and Klar, A. and Schillen, P.},
	title = {The scalar {Keller}-{Segel} model on networks},
	fjournal = {M\(^3\)AS. Mathematical Models \& Methods in Applied Sciences},
	journal = {Math. Models Methods Appl. Sci.},
	issn = {0218-2025},
	volume = {24},
	number = {2},
	pages = {221--247},
	year = {2014},
	language = {English},
	doi = {10.1142/S0218202513400071},
	keywords = {92C17,92C42,35K51},
	zbMATH = {6250057},
	Zbl = {1321.92042}
}

@article{borsche2014_traffic,
	author = {Borsche, R. and Klar, A. and K{\"u}hn, S. and Meurer, A.},
	title = {Coupling traffic flow networks to pedestrian motion},
	fjournal = {M\(^3\)AS. Mathematical Models \& Methods in Applied Sciences},
	journal = {Math. Models Methods Appl. Sci.},
	issn = {0218-2025},
	volume = {24},
	number = {2},
	pages = {359--380},
	year = {2014},
	language = {English},
	doi = {10.1142/S0218202513400113},
	keywords = {90B20},
	url = {kluedo.ub.rptu.de/frontdoor/index/index/docId/3529},
	zbMATH = {6250061},
	Zbl = {1279.90035}
}
\bibliographystyle{acm}
\end{document}